\def \R{\mathbb{R}}
\newtheorem{theorem}{Theorem}[section]
\newtheorem{cor}[theorem]{Corollary}
\newtheorem{defi}[theorem]{Definition}
\newtheorem{lema}[theorem]{Lemma}
\newtheorem{rem}[theorem]{Remark}
\newtheorem{ejem}[theorem]{Example}
\newtheorem{prop}[theorem]{Proposition}
\begin{document}

\title[Surfaces with constant principal angles with a plane]
{Surfaces in $\R^4$ with constant principal angles with respect to a plane}

\author[P. Bayard]{Pierre Bayard}
\author[A. J. Di Scala]{Antonio J. Di Scala}
\author[O. Osuna Castro]{Osvaldo Osuna Castro}
\author[G. Ruiz-Hern\'andez]{Gabriel Ruiz-Hern\'andez}

\subjclass{Primary 53C40, 53C42}
\keywords{Helix surfaces, constant angle surfaces, principal angles.}
\thanks{The fourth author was partially supported by Conacyt.}

\date{}

\maketitle

\begin{abstract}
We study surfaces in $\R^4$ whose tangent spaces have constant principal angles with respect to a plane. Using a PDE we prove the existence of surfaces with arbitrary constant principal angles. The existence of such surfaces turns out to be equivalent to the existence of a special local symplectomorphism of $\R^2$. We classify all surfaces with one principal angle equal to $0$ and observe that they can be constructed as the union of normal holonomy tubes. We also classify the complete constant angles surfaces in $\R^4$ with respect to a plane. They turn out to be extrinsic products. We characterize which surfaces with constant principal angles are compositions in the sense of Dajczer-Do Carmo. Finally, we classify surfaces with constant principal angles contained in a sphere and those with parallel mean curvature vector field.
\end{abstract}

\section{Introduction}

In \cite{Jordan} Camille Jordan defined the concept of principal angles between two linear subspaces of the Euclidean space. The principal angles are real numbers between $0$ and $\frac{\pi}{2}$ which describe the mutual position of the two subspaces. If one subspace has dimension one, the principal angle is just the usual angle between a straight line and a subspace. When both subspaces have dimension two, their principal angles are two real numbers $\theta_1,\theta_2$ such that $0\leq\theta_1 \leq \theta_2 \leq \frac{\pi}{2}.$ 

In this work we consider the principal angles between the tangent planes of an immersed surface in $\R^4$ and a fixed plane $\Pi$ in $\R^4$. Every tangent plane $T_p\Sigma$ of the surface $\Sigma$, considered as a vector subspace of $\R^4$, has two principal angles $\theta_1(p), \theta_2(p)$ with the fixed plane $\Pi,$ which depend on the point $p \in \Sigma$. The aim of this article is to investigate local and global geometric properties of those surfaces in which $\theta_1(p),\theta_2(p)$ are constant functions. For simplicity, we will call them helix surfaces or constant angles surfaces with respect to a plane. 

In the case where the constant angles surface is contained in some hyperplane $\R^3$ of $\R^4$, we show in Proposition \ref{helices-nonfull}, that the surface has a constant angle with respect to a direction in the hyperplane $\R^3;$ these surfaces are classified in \cite{DS-RH} and \cite{Mu-Ni}.
The case of constant angle submanifolds in $\R^n$ with respect to some direction in $\R^n$ was investigated by the second and the last authors in \cite{DS-RH-II}.
Constant angle surfaces with respect to a direction have been investigated very recently also in other
Riemannian manifolds, as in the works \cite{Di-Mu} and \cite{DFV}.

Here is a theorem collecting some of our main results.
\begin{theorem}\label{main} Let $\Sigma \subset \mathbb{R}^4$ be a surface with constant principal angles with respect to a plane $\Pi \subset \R^4$. Then $\Sigma$ has zero Gauss curvature and has flat normal bundle. If $\Sigma$ is complete then $\Sigma$ is an extrinsic product. Moreover, if $\Sigma$ is compact then $\Sigma$ is a torus embedded in $\mathbb{R}^4$ as a product of two closed plane curves.
\end{theorem}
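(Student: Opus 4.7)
My approach is to work with an orthonormal frame adapted to the principal angle decomposition. Locally on $\Sigma$, pick orthonormal frames $\{f_1,f_2\}$ of $\Pi$ and $\{g_1,g_2\}$ of $\Pi^\perp$ (differing from fixed orthonormal bases of $\Pi$ and $\Pi^\perp$ by rotations through angles $\alpha,\beta$ defined on $\Sigma$) so that the tangent frame $\{E_1,E_2\}$ of $\Sigma$ and the normal frame $\{E_3,E_4\}$ may be written
\[
E_1=\cos\theta_1 f_1+\sin\theta_1 g_1,\qquad E_2=\cos\theta_2 f_2+\sin\theta_2 g_2,
\]
\[
E_3=-\sin\theta_1 f_1+\cos\theta_1 g_1,\qquad E_4=-\sin\theta_2 f_2+\cos\theta_2 g_2.
\]
This is the standard realization of the principal angle decomposition, valid in the generic range $0<\theta_1\leq\theta_2<\pi/2$; the degenerate possibilities $\theta_i\in\{0,\pi/2\}$ either fall under Proposition~\ref{helices-nonfull} or are handled by a minor adjustment of the frame.

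Next I would differentiate each $E_a$ in the Euclidean connection of $\R^4$, using $df_1=f_2\,d\alpha$, $dg_1=g_2\,d\beta$, and constancy of $\theta_1,\theta_2$, and then re-expand in the frame $\{E_a\}$. A direct calculation gives
\[
\omega_{12}=\cos\theta_1\cos\theta_2\,d\alpha+\sin\theta_1\sin\theta_2\,d\beta,
\]
\[
\omega_{14}=-\cos\theta_1\sin\theta_2\,d\alpha+\sin\theta_1\cos\theta_2\,d\beta,\quad \omega_{23}=\sin\theta_1\cos\theta_2\,d\alpha-\cos\theta_1\sin\theta_2\,d\beta,
\]
while crucially
\[
\omega_{13}\equiv 0\qquad\text{and}\qquad\omega_{24}\equiv 0.
\]
These two vanishings are the structural signature of constancy of the principal angles. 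Cartan's structure equations then deliver the first two conclusions at once:
\[
d\omega_{12}=\omega_{13}\wedge\omega_{32}+\omega_{14}\wedge\omega_{42}=0,
\]
so $K\equiv 0$, and
\[
d\omega_{34}=\omega_{31}\wedge\omega_{14}+\omega_{32}\wedge\omega_{24}=0,
\]
so the normal bundle is flat.

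For the second half, the symmetry of the second fundamental form combined with $\omega_{13}=\omega_{24}=0$ forces the two shape operators to be simultaneously diagonal in $\{E_1,E_2\}$, with $A_{E_3}E_1=0$ and $A_{E_4}E_2=0$. This is exactly the extrinsic data of a Riemannian product of two plane curves sitting in orthogonal $2$-planes $V_1=\mathrm{span}(E_1,E_4)$ and $V_2=\mathrm{span}(E_2,E_3)$ of $\R^4$. Under the completeness hypothesis I would lift to the flat universal cover $\R^2$, globalize the parallel normal frame provided by flatness of $T^\perp\Sigma$, and argue that the closed $1$-form $\omega_{12}$ (which measures the twisting of the two factors) vanishes identically; once this is known, the distributions $\mathrm{span}(E_1,E_4)$ and $\mathrm{span}(E_2,E_3)$ integrate into fixed orthogonal $2$-planes of $\R^4$, exhibiting $\Sigma$ as a global extrinsic product $\gamma_1\times\gamma_2$. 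The compact case is then immediate: each complete $\gamma_i$ is a bounded plane curve, hence closed, so $\Sigma$ is a product of two closed plane curves embedded in $\R^4$ as a torus.

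The main obstacle I anticipate is precisely this final complete-to-global-product step. In the non-complete setting the local splitting of $\R^4$ may genuinely rotate from point to point (this corresponds to $\omega_{12}\not\equiv 0$ and produces the helical, non-product examples alluded to in the abstract), so completeness must be used in an essential way to rigidify the two factors. I would expect the argument to combine a de Rham/Moore-type rigidity for complete flat surfaces in $\R^4$ with flat normal bundle and simultaneously diagonalizable shape operators with an ODE argument on the angles $\alpha,\beta$ that rules out the helical configurations allowed by the local theory.
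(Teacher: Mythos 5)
Your derivation of $K\equiv 0$ and flat normal bundle via the vanishing $\omega_{13}\equiv\omega_{24}\equiv 0$ is correct and is essentially the content of the paper's Lemma \ref{flat}, just phrased in moving-frame language rather than by the paper's explicit expansion of $D_X e_i$. The two vanishing connection forms encode exactly what the paper extracts from equations (\ref{normal}): $\alpha(T_1,T_2)=0$ and the shape operators $A_{\xi_1},A_{\xi_2}$ have orthogonal kernels; Gauss and Ricci then give the first sentence of the theorem.

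The second and third sentences are where the proposal has a real gap, and you say so yourself. You correctly reduce the problem to showing $\omega_{12}\equiv 0$ (hence a parallel adapted frame, hence Moore's lemma), but you give no mechanism for deriving this from completeness, and flatness alone cannot do it: $\omega_{12}$ is closed, but the non-trivial local examples constructed in Sections \ref{Ex}--\ref{section surfaces without geodesic} have $\omega_{12}\neq 0$, so closedness is far from sufficient. The missing ingredient in the paper is a Riccati-type blow-up argument along the flows of $T_1,T_2$. From Codazzi, $m_1T_2=\nabla\lambda_1$ and $m_2T_1=\nabla\lambda_2$ are gradients; equation (\ref{dependencia1}) gives $dt=A\,d\lambda_1+B\,d\lambda_2$ with constants $A,B$ depending only on the angles; and Lemma \ref{eq.dif.} yields $\nabla_{T_2}\nabla\lambda_2=\|\nabla\lambda_2\|\bigl(A\nabla\lambda_1+B\nabla\lambda_2\bigr)$. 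Along the (complete) flow of $T_2$ the scalar $f:=\|\nabla\lambda_2\|$ therefore satisfies $\bigl(f^2/2\bigr)'=Bf^3$ with $B\neq 0$, whose non-trivial solutions blow up in finite time; so $m_2\equiv 0$, and symmetrically $m_1\equiv 0$ when $\theta_1\neq 0$. Note this gives more than $\omega_{12}\equiv 0$: in the generic range $0<\theta_1<\theta_2<\pi/2$ a complete such surface is in fact totally geodesic (Theorem \ref{thm:generic-complete}); genuine non-trivial products only occur when $\theta_1=0$ (Theorem \ref{thm:complete-angulo-zero}). Your suggestion to run the ODE ``on the angles $\alpha,\beta$'' targets the wrong quantities: it is $m_1,m_2$ that satisfy the useful Riccati equation, while $\alpha,\beta$ only enter through $dt=\omega_{12}$. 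Finally, the paper derives the compact case not just from completeness but also from Lemma \ref{compactas-tienen-angulo-cero} and Proposition \ref{mejora-enel-caso-compacto}, which force $\theta_1=0$ and $\theta_2=\pi/2$; this is what pins down $\Sigma$ as a product of two closed plane curves.
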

In Section \ref{preliminaries} we recall basic definitions and properties of principal angles between linear subspaces in $\R^4,$ and we introduce the notion of helix (or constant angles) surfaces. In Section \ref{sec:gauss-map} we study the Gauss map of a helix surface; we show that a surface $\Sigma$ has constant principal angles if and only if its Gauss map image belongs to a product of circles in $S^2(\sqrt{2}/2)\times S^2(\sqrt{2}/2)$. If $\Sigma$ is moreover compact then the circles are equators in $S^2(\sqrt{2}/2)$.

In Section \ref{section structure equations} we write down the structure equations of a constant angles surface in an adapted frame, and in Section \ref{section complete surfaces} we classify the complete helix surfaces such that $0<\theta_1<\theta_2<\pi/2$ (\emph{the generic case}) and such that $0=\theta_1<\theta_2<\pi/2.$ In Section \ref{sec:compositions}, we characterize the constant angles surfaces which are compositions, a concept studied in \cite{DT1},\cite{DT2} and \cite{DoDa} by Do Carmo, Dajczer and Tojeiro in the context of local isometric immersions of $\mathbb{R}^2$ into $\mathbb{R}^4$ with zero normal curvature. We prove that a generic helix surface is a composition if and only if its first normal space has rank one (Proposition \ref{CNT}). 

In Section \ref{section surfaces spheres} we study constant angles surfaces in spheres of $\R^4$  and in Section \ref{section structure} we describe the local structure of constant angles surfaces whose lower principal angle vanishes; we show that these surfaces can be constructed as the union of holonomy tubes along a curve in the normal space of a given curve of $\R^4.$ A similar construction was used in \cite{DO}.

In Section \ref{Ex} and \ref{section surfaces without geodesic} we show the existence of surfaces with constant principal angles; we use the Cauchy-Kowalewski existence theorem for partial differential equations. In Theorem \ref{thm:existence-revisited}, we show the existence of non trivial helix surfaces with generic principal angles and whose first normal spaces have rank two; in particular these helix surfaces are not compositions. In order to use the Cauchy-Kowalewski theorem we consider the surface $\Sigma$ as the graph of a local diffeomorphism $F: U \subset \mathbb{R}^2 \rightarrow \mathbb{R}^2$. Then we observe that $\Sigma$ is a helix surface if and only if $F$ is a symplectomorphism whose jacobian matrix has constant length. It is interesting to remark that, by Theorem \ref{main}, a global symplectomorphism $F:\mathbb{R}^2 \rightarrow \mathbb{R}^2$ whose Jacobian matrix has constant length is necessarily an affine map.

In the last section, we consider constant angle surfaces with parallel mean curvature vector. We prove in Theorem \ref{thm:parallel-meancurvature}, that the latter condition is equivalent to the fact that $\Sigma$ is a product.

\section{Preliminaries}\label{preliminaries}

\subsection{Principal angles.}\label{section principal angles}

Here we recall the notion of principal angles between two planes in $\R^4.$ We refer to \cite{Jordan} or \cite{Jiang} for more details.

\begin{defi}
\label{dfn:angulos-principales}
\em
Let $V$ and $W$ be two-dimensional subspaces of $\R^4$. The principal angles between $V$ and $W$, $0 \leq \theta_1 \leq \theta_2 \leq \pi /2$, are defined by
$$\cos \theta_1 := \langle v_1, w_1 \rangle :=
 \max \{ \langle v , w \rangle | v \in V, w \in W, |v|=|w|=1 \},$$
$$\cos \theta_2 := \langle v_2 , w_2 \rangle :=
\max \{ \langle v , w \rangle  | v \in V, w \in W, v \perp v_1, w \perp w_1, |v|=|w|=1 \} .$$
\end{defi}
If $p_{W}:\R^4\rightarrow W$ stands for the orthogonal projection on $W,$ the expression $\mathrm{Q}_{WV}(v) := \langle \mathrm{p}_{W}(v)  , \mathrm{p}_{W}(v) \rangle$ defines a quadratic (positive semidefinite) form on the subspace ${V}$. Let us denote by $\mathcal{S}_{WV} \in Sym(V)$ the symmetric endomorphism such that
\[ \langle \mathrm{p}_{W}(v)  , \mathrm{p}_{W}(v') \rangle = \langle \mathcal{S}_{WV}(v),v' \rangle, \, \, \, \forall v,v' \in V. \]
The following is well-known.
\begin{prop}\label{Lagrange} 
The eigenvalues of $\mathcal{S}_{WV}$ are $\cos^2(\theta_1)$ and $\cos^2(\theta_2).$ In particular, there exists an orthonormal basis $(v_1,v_2)$ of $V$ such that, for all $v=X_1v_1+X_2v_2$ belonging to $V,$
\begin{equation}\label{QWV}
Q_{WV}(v)=\cos^2\theta_1 X_1^2+\cos^2\theta_2X_2^2.
\end{equation}
\end{prop}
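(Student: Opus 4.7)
The plan is to interpret the problem through the Rayleigh--Ritz variational characterization of eigenvalues of the symmetric endomorphism $\mathcal{S}_{WV}$, and to show that the constrained maximizations hidden in the definition of $\theta_1,\theta_2$ coincide with the first and second Rayleigh quotients of $\mathcal{S}_{WV}$ on $V$.

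First I would observe that for a fixed unit vector $v\in V$, Cauchy--Schwarz (applied inside $W$) gives
\[ \max_{w\in W,\,|w|=1}\langle v,w\rangle \;=\; \max_{w\in W,\,|w|=1}\langle \mathrm{p}_W(v),w\rangle \;=\; |\mathrm{p}_W(v)| \;=\; \sqrt{Q_{WV}(v)}, \]
the maximizer being $w=\mathrm{p}_W(v)/|\mathrm{p}_W(v)|$. Hence
\[ \cos\theta_1 \;=\; \max_{v\in V,\,|v|=1}\sqrt{Q_{WV}(v)}, \]
so $\cos^2\theta_1$ is the largest eigenvalue of the self-adjoint endomorphism $\mathcal{S}_{WV}$, and a unit maximizer $v_1$ is a corresponding eigenvector. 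Moreover $w_1=\mathrm{p}_W(v_1)/\cos\theta_1$, so $\mathrm{p}_W(v_1)=\cos\theta_1\,w_1$.

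The key observation for the second step is that $v\perp v_1$ in $V$ already forces $\mathrm{p}_W(v)\perp w_1$ in $W$, so that the extra constraint $w\perp w_1$ in the definition of $\theta_2$ is automatic. Indeed, using the self-adjointness of $\mathcal{S}_{WV}$ together with $\mathcal{S}_{WV}(v_1)=\cos^2\theta_1\,v_1$, one gets, for every $v\in V$,
\[ \langle \mathrm{p}_W(v),w_1\rangle \;=\; \tfrac{1}{\cos\theta_1}\langle \mathrm{p}_W(v),\mathrm{p}_W(v_1)\rangle \;=\; \tfrac{1}{\cos\theta_1}\langle \mathcal{S}_{WV}(v),v_1\rangle \;=\; \cos\theta_1\,\langle v,v_1\rangle, \]
which vanishes when $v\perp v_1$. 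Applying the same Cauchy--Schwarz argument as above on the plane $V$ restricted to $v_1^{\perp}$ gives
\[ \cos\theta_2 \;=\; \max_{v\in V,\,v\perp v_1,\,|v|=1}\sqrt{Q_{WV}(v)}, \]
which by the variational characterization equals the square root of the second eigenvalue of $\mathcal{S}_{WV}$. Taking $v_2$ to be a unit eigenvector in $v_1^{\perp}$ then yields an orthonormal basis $(v_1,v_2)$ of $V$ in which $\mathcal{S}_{WV}$ is diagonal with entries $\cos^2\theta_1,\cos^2\theta_2$, and formula \eqref{QWV} follows immediately from $Q_{WV}(v)=\langle \mathcal{S}_{WV}(v),v\rangle$.

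I do not expect any genuine obstacle: the only subtle point is the bookkeeping that reconciles the two apparently different constraints $v\perp v_1$ and $(v\perp v_1,\,w\perp w_1)$ appearing in the definition of $\theta_2$. Once the identity $\langle \mathrm{p}_W(v),w_1\rangle=\cos\theta_1\langle v,v_1\rangle$ is written down, the rest is a direct application of the spectral theorem for the symmetric operator $\mathcal{S}_{WV}$ on the two-dimensional space $V$.
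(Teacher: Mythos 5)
The paper does not give a proof of this proposition; it simply labels it as well-known. Your Rayleigh--Ritz argument is correct and complete. The first step — that $\max_{w\in W,|w|=1}\langle v,w\rangle=|\mathrm{p}_W(v)|=\sqrt{Q_{WV}(v)}$ for fixed unit $v$ — correctly turns the optimization over the pair $(v,w)$ into the maximization of the Rayleigh quotient of $\mathcal{S}_{WV}$ over unit $v\in V$, identifying $\cos^2\theta_1$ with the top eigenvalue. The genuinely delicate point is that the definition of $\theta_2$ imposes the joint constraints $v\perp v_1$ \emph{and} $w\perp w_1$, and you resolve this cleanly: the identity $\langle \mathrm{p}_W(v),w_1\rangle=\cos\theta_1\,\langle v,v_1\rangle$, obtained from self-adjointness and $\mathcal{S}_{WV}(v_1)=\cos^2\theta_1\,v_1$, shows that $v\perp v_1$ forces $\mathrm{p}_W(v)\perp w_1$, so the inner maximizer $w=\mathrm{p}_W(v)/|\mathrm{p}_W(v)|$ automatically satisfies the second constraint. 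Since $v_1^{\perp}\cap V$ is one-dimensional, the remaining maximum is attained at the other eigenvector $v_2$, and the diagonal form of $\mathcal{S}_{WV}$ in $(v_1,v_2)$ gives \eqref{QWV}. The only implicit hypothesis is $\cos\theta_1>0$, used when you divide by $\cos\theta_1$; if $\cos\theta_1=0$ then $\theta_1=\theta_2=\pi/2$, $V\perp W$, $\mathcal{S}_{WV}=0$, and the proposition holds trivially, so this degenerate case costs nothing. A one-line remark covering it would make the proof airtight.
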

The next lemma links the principal angles between $V$ and $W^{\perp}$ to the principal angles between $V$ and $W:$
\begin{lema}
\label{principal-angles-orthogonal}
Let $V$ and $W$ be two-dimensional subspaces of $\R^4$. If the principal angles between $V$ and $W$ are $\theta_1$ and $\theta_2$, then the principal angles  between $V$ and $W^\perp$ are $\theta_1^\perp =\pi /2 - \theta_2 \leq \theta_2^\perp =\pi /2 - \theta_1$.
\end{lema}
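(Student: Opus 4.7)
\medskip

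\noindent\textbf{Proof plan.} The plan is to reduce the statement to a computation with the symmetric endomorphism $\mathcal{S}_{WV}$ introduced just before the lemma, exploiting the fact that orthogonal projections onto $W$ and onto $W^\perp$ are complementary.

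First, I would observe that for every $v\in V$, the orthogonal decomposition $v=p_W(v)+p_{W^\perp}(v)$ together with $p_W(v)\perp p_{W^\perp}(v)$ gives the identity
\[
Q_{W^\perp V}(v) \;=\; |p_{W^\perp}(v)|^2 \;=\; |v|^2-|p_W(v)|^2 \;=\; \langle v,v\rangle - Q_{WV}(v).
\]
Rewriting this via the associated symmetric endomorphisms yields $\mathcal{S}_{W^\perp V}=\mathrm{Id}_V-\mathcal{S}_{WV}$ on $V$.

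Next, I would apply Proposition \ref{Lagrange} twice. Since the eigenvalues of $\mathcal{S}_{WV}$ are $\cos^2\theta_1$ and $\cos^2\theta_2$, the eigenvalues of $\mathcal{S}_{W^\perp V}$ are $1-\cos^2\theta_1=\sin^2\theta_1$ and $1-\cos^2\theta_2=\sin^2\theta_2$, and they are attained on the same orthonormal basis of $V$. By Proposition \ref{Lagrange} applied to the pair $(V,W^\perp)$, these same eigenvalues must equal $\cos^2\theta_1^\perp$ and $\cos^2\theta_2^\perp$.

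Finally, I would order the eigenvalues. Because $0\le\theta_1\le\theta_2\le\pi/2$, we have $\sin^2\theta_1\le\sin^2\theta_2$, so the larger eigenvalue $\cos^2\theta_1^\perp$ equals $\sin^2\theta_2=\cos^2(\pi/2-\theta_2)$ and the smaller $\cos^2\theta_2^\perp$ equals $\sin^2\theta_1=\cos^2(\pi/2-\theta_1)$. Taking square roots in $[0,\pi/2]$ gives $\theta_1^\perp=\pi/2-\theta_2$ and $\theta_2^\perp=\pi/2-\theta_1$, and the inequality $\theta_1^\perp\le\theta_2^\perp$ is automatic. There is no real obstacle here; the only point requiring care is correctly matching the ordering of angles with the (reversed) ordering of their squared cosines, which is handled in the last step.
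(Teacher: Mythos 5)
Your proposal is correct and follows essentially the same route as the paper: both use the orthogonal decomposition $v=p_W(v)+p_{W^\perp}(v)$ to get $Q_{W^\perp V}=|\cdot|^2-Q_{WV}$ (equivalently $\mathcal{S}_{W^\perp V}=\mathrm{Id}_V-\mathcal{S}_{WV}$), then apply Proposition \ref{Lagrange} to $W^\perp$ and match the reversed ordering of the squared cosines. No gaps; the careful handling of the eigenvalue ordering in your last step is exactly the point the paper also relies on.
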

\begin{proof}
Since, for all $v\in V,$  $v=p_W(v)+p_{W^{\perp}}(v)$ with $p_W(v)\perp p_{W^{\perp}}(v),$ we readily get
$$|v|^2=Q_{WV}(v)+Q_{W^{\perp}V}(v).$$
Thus, by (\ref{QWV}),
$$Q_{W^{\perp}V}(v)=\sin^2\theta_1 X_1^2+\sin^2\theta_2X_2^2.$$
Using Proposition \ref{Lagrange} again (with $W^{\perp}$ instead of $W$), we deduce that $\cos^2(\theta_1^\perp)=\sin^2\theta_2$ and $\cos^2(\theta_2^\perp)=\sin^2\theta_1,$ and the result follows.
\end{proof}
\begin{prop}
Given any two angles $0 \leq \theta_1 \leq \theta_2 \leq \pi /2$, there exist two planes $V$ and $W$ in $\R^4$ with these two principal angles.
\end{prop}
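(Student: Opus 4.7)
The plan is to give an explicit construction and then appeal to Proposition \ref{Lagrange} to verify that the principal angles come out as prescribed. Since the statement is purely existential, there is no real obstacle, only the task of writing down the right basis.

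First I would fix the standard basis $e_1,e_2,e_3,e_4$ of $\R^4$ and set $W := \mathrm{span}(e_1,e_2)$. Then, given the two target angles $\theta_1\le\theta_2$, I would define
\[
v_1 := \cos\theta_1\, e_1 + \sin\theta_1\, e_3,\qquad v_2 := \cos\theta_2\, e_2 + \sin\theta_2\, e_4,
\]
and take $V := \mathrm{span}(v_1,v_2)$. Because $(v_1,v_2)$ are manifestly orthonormal (the $e_1,e_3$-components are orthogonal to the $e_2,e_4$-components), this is a legitimate two-dimensional subspace of $\R^4$.

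Next I would verify the principal angles. For $v = X_1 v_1 + X_2 v_2 \in V$, the orthogonal projection onto $W$ is $p_W(v) = X_1\cos\theta_1\, e_1 + X_2\cos\theta_2\, e_2$, so
\[
Q_{WV}(v) = \cos^2\theta_1\, X_1^2 + \cos^2\theta_2\, X_2^2.
\]
Comparing with \eqref{QWV}, the symmetric endomorphism $\mathcal{S}_{WV}$ is diagonal in the orthonormal basis $(v_1,v_2)$ with eigenvalues $\cos^2\theta_1$ and $\cos^2\theta_2$. By Proposition \ref{Lagrange}, these eigenvalues are exactly $\cos^2$ of the principal angles between $V$ and $W$. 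Since $0\le\theta_1\le\theta_2\le\pi/2$ and the cosine is a decreasing bijection on this interval, the principal angles of $V$ and $W$ are $\theta_1$ and $\theta_2$, as required.

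The only thing that even looks like a subtlety is making sure the basis $(v_1,v_2)$ is actually orthonormal and that the diagonal ordering matches the convention $\theta_1\le\theta_2$; both are automatic from the way the components were split between the $\{e_1,e_3\}$ and $\{e_2,e_4\}$ planes and from the monotonicity of cosine. So the whole proof amounts to writing down the basis and invoking Proposition \ref{Lagrange}.
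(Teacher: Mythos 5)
Your proof is correct and is essentially the same as the paper's: both construct $W$ as a coordinate plane and take $v_1,v_2$ to be unit vectors whose $W$-components are orthogonal vectors of lengths $\cos\theta_1$ and $\cos\theta_2$, then read off the principal angles from $Q_{WV}$ via Proposition \ref{Lagrange}. The only difference is an immaterial relabeling of which basis vectors carry the two components.
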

\begin{proof}
Let $(w_1,w_2,w_3,w_4)$ be an orthonormal basis of $\R^4,$ and consider the plane
$W= span\{w_1, \ w_2\}$. Let $v_1$ and $v_2$ be the orthonormal vectors given by
$$v_1:= \cos(\theta_1) w_2 + \sin(\theta_1) w_4 \ \ \ v_2:= \cos(\theta_2) w_1 + \sin(\theta_2) w_3 .$$
Let us define $V=span\{v_1, \ v_2\}$. Since $p_W(v_1)=\cos(\theta_1) w_2$ and $p_W(v_2)=\cos(\theta_2) w_1,$ writing $v=X_1v_1+X_2v_2$ we readily get
$$Q_{WV}(v)=|p_W(v)|^2=\cos^2(\theta_1)X_1^2+\cos^2(\theta_2)X_2^2,$$
and the result follows from (\ref{QWV}).
\end{proof}

\subsection{Principal angles and bivectors}\label{sectionanglesbivectors}
We consider the vector space $\Lambda^2\R^4$ endowed with its natural scalar product, defined on decomposable bivectors by
$$\langle v_1\wedge v_2,w_1\wedge w_2\rangle:=\langle v_1,w_1\rangle\langle v_2,w_2\rangle-\langle v_2,w_1\rangle\langle v_1,w_2\rangle.$$
Let $V$ and $W$ be two oriented planes of $\R^4.$ If $(v_1,v_2)$ and $(w_1,w_2)$ are positively oriented and orthonormal basis of $V$ and $W$, we define the angle $\theta\in [0,\pi]$ between $V$ and $W$ by  the formula
$$\cos\theta=\langle v_1\wedge v_2,w_1\wedge w_2\rangle.$$
Let us denote by  $\theta^{\perp}\in[0,\pi]$ the angle between $V$ and $W^{\perp}$, where the orientation of $W^\perp$ is such that the union of two positively oriented basis of $W$ and $W^\perp$ is a positively oriented basis of $\R^4.$ Note that $\theta^{\perp}$ is also the angle between $V^{\perp}$ and $W.$ The following result may be find in \cite{Jiang}, Theorem 5.
\begin{lema}\label{lemaformulatheta}
The angles $\theta$ and $\theta^{\perp}$ are linked to the principal angles $\theta_1$ and $\theta_2$ between $V$ and $W$ by the formulae
\begin{equation}\label{formulatheta1}
|\cos\theta|=\cos\theta_1.\cos\theta_2\hspace{1cm}\mbox{and}\hspace{1cm}
|\cos\theta^{\perp}|=\sin\theta_1.\sin\theta_2.
\end{equation}
\end{lema}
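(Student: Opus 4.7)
The plan is to produce an orthonormal basis of $\R^4$ adapted simultaneously to $V$, $W$ and $W^\perp$, and to read the two inner products directly from the bivector expansion of $v_1\wedge v_2$. By Proposition \ref{Lagrange} I can choose a positively oriented orthonormal basis $(v_1,v_2)$ of $V$ that diagonalizes $\mathcal S_{WV}$, so that $\langle p_W(v_i),p_W(v_j)\rangle=\cos^2\theta_i\,\delta_{ij}$. In the generic case $0<\theta_1\le\theta_2<\pi/2$ the vectors $w_i:=p_W(v_i)/\cos\theta_i$ and $u_i:=p_{W^\perp}(v_i)/\sin\theta_i$ are well-defined unit vectors in $W$ and $W^\perp$ respectively. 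Orthogonality of $w_1$ and $w_2$ is immediate from the diagonalization, and the identity $\langle v_1,v_2\rangle=\langle p_W(v_1),p_W(v_2)\rangle+\langle p_{W^\perp}(v_1),p_{W^\perp}(v_2)\rangle$ forces $\langle u_1,u_2\rangle=0$ as well; hence $(w_1,w_2,u_1,u_2)$ is an orthonormal basis of $\R^4$.

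Writing $v_i=\cos\theta_i\,w_i+\sin\theta_i\,u_i$ and expanding the exterior product, I would obtain
\[
v_1\wedge v_2 = \cos\theta_1\cos\theta_2\,w_1\wedge w_2 + \sin\theta_1\sin\theta_2\,u_1\wedge u_2 + \text{mixed terms }w_i\wedge u_j.
\]
The four decomposable bivectors built from pairs drawn from this orthonormal frame are mutually orthogonal unit vectors in $\Lambda^2\R^4$. Taking the scalar product of $v_1\wedge v_2$ with $w_1\wedge w_2$ (respectively $u_1\wedge u_2$) annihilates the mixed terms and leaves exactly $\cos\theta_1\cos\theta_2$ (respectively $\sin\theta_1\sin\theta_2$). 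If $\omega_W$ and $\omega_{W^\perp}$ denote the unit bivectors representing $W$ and $W^\perp$ with their prescribed orientations, then $\omega_W=\varepsilon\,w_1\wedge w_2$ and $\omega_{W^\perp}=\varepsilon'\,u_1\wedge u_2$ for some signs $\varepsilon,\varepsilon'\in\{\pm 1\}$, so the definitions of $\theta$ and $\theta^{\perp}$ give $\cos\theta=\varepsilon\cos\theta_1\cos\theta_2$ and $\cos\theta^{\perp}=\varepsilon'\sin\theta_1\sin\theta_2$.

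Taking absolute values then yields the two claimed identities. The only subtlety is bookkeeping of orientations: nothing in the construction guarantees that the bases $(w_1,w_2)$ of $W$ or $(u_1,u_2)$ of $W^\perp$ are positively oriented, which is precisely the reason the statement involves $|\cos\theta|$ and $|\cos\theta^{\perp}|$ rather than signed cosines. The degenerate cases $\theta_1=0$ or $\theta_2=\pi/2$ require a quick separate check: in each case one of the coefficients $\cos\theta_i$ or $\sin\theta_i$ vanishes and the corresponding vector $u_i$ or $w_i$ may be chosen arbitrarily to complete an orthonormal frame, after which the same expansion gives the same formulas.
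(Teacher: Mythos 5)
Your argument is correct, and it is worth noting that the paper itself does not prove this lemma: it simply cites Jiang, Theorem 5, so there is no internal proof to compare against. Your proof supplies a clean, self-contained derivation. The key idea of building the adapted orthonormal frame $(w_1,w_2,u_1,u_2)$ from the eigenbasis of $\mathcal{S}_{WV}$ is exactly right: the diagonalization gives $\langle w_1,w_2\rangle=0$, the Pythagorean identity $\langle v_1,v_2\rangle=\langle p_W v_1,p_W v_2\rangle+\langle p_{W^\perp} v_1,p_{W^\perp} v_2\rangle=0$ then forces $\langle u_1,u_2\rangle=0$, and since $W\perp W^\perp$ the four vectors are orthonormal. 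Expanding $v_1\wedge v_2=\cos\theta_1\cos\theta_2\,w_1\wedge w_2+\cos\theta_1\sin\theta_2\,w_1\wedge u_2+\sin\theta_1\cos\theta_2\,u_1\wedge w_2+\sin\theta_1\sin\theta_2\,u_1\wedge u_2$ and pairing with $\pm w_1\wedge w_2$ and $\pm u_1\wedge u_2$ kills the mixed terms (these four bivectors are pairwise orthogonal in $\Lambda^2\R^4$), giving the two scalars up to sign. You correctly identify the sign ambiguity (the orientations of $(w_1,w_2)$ and $(u_1,u_2)$ relative to the prescribed orientations of $W$, $W^\perp$) as the reason the lemma carries absolute values, and your remark on the degenerate cases $\theta_1=0$ or $\theta_2=\pi/2$ — choosing the undetermined unit vector arbitrarily and observing its coefficient vanishes in the expansion — closes the remaining gap. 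This is a complete proof of the claim the paper leaves as a citation.
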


\subsection{Surfaces with constant principal angles.}
Recall that a surface $\Sigma \subset \mathbb{R}^4$ is called \emph{full} if it is not contained in an affine hyperplane.
\begin{defi}
\label{dfn:helix-surface}
\em
Let $\Sigma$ be an immersed surface in $\R^4$ and let $\Pi \subset \R^4$ be a two-dimensional plane.
We say that $\Sigma$ is a {\em helix surface} or a {\em constant angles surface} with respect to $\Pi$, if the principal angles between $T_pM$ and $\Pi$ do not depend on $p \in \Sigma$. We will also say that $\Sigma$ has \emph{constant principal angles} with respect to the plane $\Pi$.
\end{defi}
\begin{ejem}{(The Clifford torus is a helix surface)\\}
\em
Let us consider the torus $T^2= \mathbb{S}^1 \times \mathbb{S}^1 \subset \R^2 \times \R^2=\R^4$ with the metric induced by the metric of $\R^4.$ Let us see that $T^2$ is a helix surface with respect to the plane $\Pi_{12}=\{(x_1, x_2, x_3, x_4) \in \R^4 | x_3=x_4=0 \}$. The tangent space of $T^2$ at the point $p=(x_1, x_2, x_3, x_4)\in T^2$ has an orthonormal basis given by $(v_1=(-x_2, x_1,0,0)$, $v_2=(0,0,-x_4,x_3))$. Since $w_1 := v_1$ belongs to $T_pT^2 \cap \Pi_{12}$, we readily get $\cos \theta_1=\langle v_1, w_1 \rangle = 1$, i.e. $\theta_1=0$. Moreover, we get that $v_2 \perp v_1,$ $w_2:=(x_1, x_2,0,0)$ belongs to $\Pi_{12}$ and that $w_2 \perp w_1$. Thus $\cos \theta_2=\langle v_2, w_2 \rangle = 0$, i.e. $\theta_2 = \pi/2 $. So, the flat torus has constant principal angles $\theta_1=0, \theta_2=\pi/2$ with respect to the plane $\Pi_{12}$, i.e. $T^2$ is a helix with respect to $\Pi_{12}$. Analogously $T^2$ is a helix with respect to the plane $\Pi_{34} = \{(x_1, x_2, x_3, x_4) \in \R^4 | x_1=x_2=0 \}$, with the same constant principal angles.
\end{ejem}
\begin{ejem}
\em
We construct a helix surface in $\R^4$ with respect to a plane by using a nonplanar curve in $\R^3$.
The surface will be full and will be a Riemannian product of $\R$ with a curve in $\R^3$. Let $\gamma$ be a classical regular helix curve in $\R^3$ with respect to a fixed direction $d$, i.e.
such that the tangent vectors of $\gamma$ make a constant angle $\theta$ with $d$. We define $\Sigma$ as the Riemannian product $\gamma \times \R$ which is an immersed surface in $\R^3 \times \R = \R^4$. Then $\Sigma$
has constant principal angles with respect to the plane generated by $d$ and $e_4=(0,0,0,1)$. The
constant principal angles are $\theta$ and $0$.
\end{ejem}

\begin{lema}
\label{compactas-tienen-angulo-cero}
Let $\Sigma^2 \subset \R^4$ be a compact immersed surface. Let $\Pi$ be
any two-dimensional plane in $\R^4$. Then there exists $p \in \Sigma$, depending on $\Pi$,
such that $T_p\Sigma$ and $\Pi$ have a principal angle equal to zero.
\end{lema}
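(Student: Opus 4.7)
The plan is to translate the statement into a transversality claim and then use a short topological argument.

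First, I would observe that $\theta_1(p) = 0$ if and only if $T_p\Sigma \cap \Pi \neq \{0\}$. Indeed, by Definition \ref{dfn:angulos-principales}, $\cos\theta_1(p) = 1$ forces the maximizing unit vectors to satisfy $v_1 = w_1 \in T_p\Sigma \cap \Pi$; conversely any nonzero $v \in T_p\Sigma \cap \Pi$ realizes $\cos\theta_1(p) = 1$. So the lemma reduces to finding $p \in \Sigma$ at which $T_p\Sigma$ meets $\Pi$ nontrivially.

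Next, let $\pi : \R^4 \to \Pi^\perp$ denote the orthogonal projection; its kernel is $\Pi$, and $\Pi^\perp$ is a $2$-dimensional linear subspace, so isometric to $\R^2$. Consider the smooth map $f := \pi|_\Sigma : \Sigma \to \Pi^\perp$. For each $p \in \Sigma$ we have $df_p = \pi|_{T_p\Sigma}$, so
\[
\ker df_p \;=\; T_p\Sigma \cap \ker\pi \;=\; T_p\Sigma \cap \Pi.
\]
Hence it suffices to exhibit a point $p \in \Sigma$ at which $df_p$ is not injective.

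Suppose, for contradiction, that $df_p$ is injective at every $p \in \Sigma$. Since $\dim T_p\Sigma = \dim \Pi^\perp = 2$, injectivity implies that $df_p$ is a linear isomorphism, so $f$ is a local diffeomorphism. Then $f(\Sigma)$ is open in $\Pi^\perp \cong \R^2$. On the other hand, $\Sigma$ is compact, so $f(\Sigma)$ is compact, hence closed in $\R^2$. Being a nonempty subset of the connected space $\R^2$ which is both open and closed, $f(\Sigma)$ would equal $\R^2$, contradicting compactness.

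There is no real obstacle: the only conceptual point is the identification $\ker df_p = T_p\Sigma \cap \Pi$, and once this is noted the conclusion follows from the standard fact that a compact $2$-manifold cannot be immersed as a local diffeomorphism into $\R^2$.
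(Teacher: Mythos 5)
Your proof is correct, and it takes a genuinely different route from the paper's. The paper fixes a hyperplane $H$ containing $\Pi$ and uses compactness (implicitly, by maximizing the height function in the direction normal to $H$) to find a point $p$ where $T_p\Sigma$ lies in the linear hyperplane parallel to $H$; since two $2$-planes inside a $3$-dimensional space must meet in a line, $T_p\Sigma$ and $\Pi$ share a direction and hence have a zero principal angle. You instead make explicit the equivalence $\theta_1(p)=0 \iff T_p\Sigma\cap\Pi\neq\{0\}$ (which the paper leaves implicit) and then show that the projection of $\Sigma$ onto $\Pi^{\perp}$ must be singular somewhere, via the open--closed argument: a nowhere-singular map would be a local diffeomorphism onto an open and compact, hence all of, $\R^2$. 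Both arguments are short and rest on compactness; the paper's is the more elementary (one maximization plus a dimension count), while yours trades that for the inverse function theorem and connectedness of $\R^2$ but has the advantage of isolating cleanly where the zero angle comes from, namely the degeneracy locus of the projection to $\Pi^{\perp}$ --- a viewpoint that also makes the statement's sharpness transparent (for noncompact $\Sigma$, e.g.\ a graph over $\Pi^{\perp}$, the projection can be everywhere nonsingular). One small point of hygiene: since $\Sigma$ is only immersed, $f$ should be read as $\pi\circ\iota$ for the immersion $\iota$, with $\ker df_p=(d\iota_p)^{-1}\bigl(T_p\Sigma\cap\Pi\bigr)$; this changes nothing in the argument.
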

\begin{proof}
Let $H$ be any hyperplane containing the plane $\Pi.$ Since $\Sigma$ is compact there exists $p \in \Sigma$ such that $T_p\Sigma \subset H$. So, the two planes $T_p\Sigma$ and $\Pi$ belong to the hyperplane $H$. Therefore, they have a common straight line and thus a principal angle has to be zero.
\end{proof}
\begin{prop}
\label{mejora-enel-caso-compacto}
If $\Sigma$ is a compact immersed helix surface in $\R^4$ with respect to a plane,
then it has constant principal angles equal to zero and $\pi/2.$
\end{prop}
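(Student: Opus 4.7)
The plan is to apply Lemma \ref{compactas-tienen-angulo-cero} twice: once to the plane $\Pi$ itself and once to its orthogonal complement $\Pi^{\perp}$. The first application will force the smaller principal angle $\theta_1$ to vanish, and the second, combined with Lemma \ref{principal-angles-orthogonal} relating the principal angles with respect to $W$ and $W^{\perp}$, will force $\theta_2=\pi/2$.

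More precisely, let $\theta_1\le \theta_2$ be the constant principal angles of $\Sigma$ with respect to $\Pi$. Since $\Sigma$ is compact, Lemma \ref{compactas-tienen-angulo-cero} applied to $\Pi$ provides a point $p\in \Sigma$ where $T_p\Sigma$ and $\Pi$ share a principal angle equal to $0$; this forces $\theta_1(p)=0$, and by the helix hypothesis $\theta_1\equiv 0$ on $\Sigma$.

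To control $\theta_2$, observe that having constant principal angles with respect to $\Pi$ is equivalent, by Lemma \ref{principal-angles-orthogonal}, to having constant principal angles with respect to $\Pi^{\perp}$, these being $\pi/2-\theta_2\le \pi/2-\theta_1$. Thus $\Sigma$ is also a compact helix surface with respect to $\Pi^{\perp}$. Applying Lemma \ref{compactas-tienen-angulo-cero} to $\Pi^{\perp}$ produces a point $q\in\Sigma$ at which the smaller principal angle between $T_q\Sigma$ and $\Pi^{\perp}$ vanishes, that is $\pi/2-\theta_2=0$. Hence $\theta_2=\pi/2$, and the proof is complete.

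There is no real obstacle here; the only point that needs to be noticed is that one cannot conclude directly from a single application of Lemma \ref{compactas-tienen-angulo-cero} — it only controls the smaller angle — so the key idea is to exploit the symmetry $\Pi\leftrightarrow\Pi^{\perp}$ provided by Lemma \ref{principal-angles-orthogonal} to get a grip on $\theta_2$ as well.
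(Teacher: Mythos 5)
Your argument is correct and is exactly the paper's own proof, merely spelled out in a bit more detail: the paper likewise applies Lemma \ref{compactas-tienen-angulo-cero} to $\Pi$ to force $\theta_1=0$ and then invokes ``the same argument applied to $\Pi^{\perp}$'' together with Lemma \ref{principal-angles-orthogonal} to force $\theta_2=\pi/2$.
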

\begin{proof}
By Lemma \ref{compactas-tienen-angulo-cero}, there exists $p \in \Sigma$ with a principal angle at $p$ equal to zero. Because $\Sigma$ is a helix, $\Sigma$ has a zero principal angle at every point. Now, the same argument applied to $\Pi^{\perp}$ shows that the other principal angle is equal to $\pi/2$ (using also Lemma \ref{principal-angles-orthogonal}).
\end{proof}

\begin{ejem}
\label{helixsurfaceinR3-induce-surfaceinR4}
\em
We construct a noncompact helix surface in $\R^4$ with respect to a plane, with one principal angle equal to zero. In this example the helix surface is not full.
Let $\Sigma$ be an immersed surface in the Euclidean space $\R^3$ with its standard Riemannian
metric. Let us assume that there is a unit vector $d \in \R^3$ such that every tangent space
of $\Sigma$ makes a constant angle $0< \theta < \pi/2$ with the direction $d.$ The authors Di Scala and Ruiz-Hern\'andez investigated this class of submanifolds in \cite{DS-RH} and \cite{DS-RH-II}; they are called helix surfaces with respect to the direction $d,$ or constant angle surfaces. They are never compact. For example a cone of revolution is a helix with respect to a direction of its axis of revolution. Now, let us consider $\Sigma$ as an immersed surface in $\R^4,$ using the inclusion $\R^3\subset \R^3 \times \R= \R^4$. Let $\Pi$ be the plane generated by $d$ and $e_4=(0,0,0,1) \in \R^4$. Then $\Sigma$ is a helix surface with respect to $\Pi$ and its constant principal angles are $\pi/2$ and $\theta$. Let $\Pi^\perp$ be the orthogonal complement of $\Pi$ in $\R^4$. By Lemma \ref{principal-angles-orthogonal}, $\Sigma$ is a helix surface with respect to $\Pi^\perp$ and its constant principal angles are $\pi/2 - \theta$ and $0$.
\end{ejem}

There is a natural relation between non full helix surfaces in $\R^4$ with respect to a plane and helix surfaces in $\R^3$ with respect to a direction:

\begin{prop}{\emph{Classification of helix surfaces in $\R^4$ which are not full}.\\}
\label{helices-nonfull}
Let $\Sigma$ be a non full immersed surface in $\R^4$ which is a helix with respect to a plane $\Pi \subset \R^4$. Assume that $\Sigma$ is contained in $\R^3\subset\R^4.$ Then $\Sigma$ is a helix surface in $\R^3$ with respect to some direction in $\R^3$.
\end{prop}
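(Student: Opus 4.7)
The plan is to translate the constant principal angle condition into a constant bivector‐pairing via Lemma~\ref{lemaformulatheta}, and then to exploit the fact that every bivector in $\Lambda^2\R^3$ is simple in order to identify an auxiliary plane $P\subset\R^3$ with which $\Sigma$ makes a constant angle. Concretely, I let $H\cong\R^3$ denote the hyperplane containing $\Sigma$, pick a unit normal $N$ to $H$ in $\R^4$, and use the splitting $\Lambda^2\R^4=\Lambda^2 H\oplus(N\wedge H)$ to write a unit simple bivector representing $\Pi$ as $\pi=\pi_0+N\wedge\xi$ with $\pi_0\in\Lambda^2 H$ and $\xi\in H$. For each $p\in\Sigma$ the unit bivector $\omega_p$ associated to $T_p\Sigma$ lies in $\Lambda^2 H$, so $\langle\omega_p,\pi\rangle=\langle\omega_p,\pi_0\rangle$, and Lemma~\ref{lemaformulatheta} says that this number has constant absolute value $\cos\theta_1\cos\theta_2$.

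I would first handle the generic case $\pi_0\neq 0$. Since $\dim H=3$, every element of $\Lambda^2 H$ is a simple bivector, hence $\pi_0=c\,\hat\omega_P$ for a positive constant $c=|\pi_0|$ and a unit simple bivector $\hat\omega_P$ representing some oriented plane $P\subset H$. Constancy of $|\langle\omega_p,\hat\omega_P\rangle|$ then means that the angle between $T_p\Sigma$ and $P$, measured in the Euclidean $3$-space $H$, is independent of $p$. In $\R^3$ this is equivalent to saying that the unit normal to $T_p\Sigma$ in $H$ makes a constant angle with the unit normal $n_P$ of $P$ in $H$, which is exactly the statement that $\Sigma$ is a helix surface in $\R^3=H$ with respect to the direction $d:=n_P\in H$.

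To dispose of the degenerate case $\pi_0=0$, I would pass to the orthogonal plane. In this case $\pi=N\wedge\xi$, so $\Pi=\mathrm{span}(N,\xi)$ and $\Pi^\perp$ is the $2$-plane of $H$ orthogonal to $\xi$; in particular $\Pi^\perp\subset H$. By Lemma~\ref{principal-angles-orthogonal}, $\Sigma$ also has constant principal angles with respect to $\Pi^\perp$, and since the bivector of $\Pi^\perp$ now lies entirely in $\Lambda^2 H$ and is nonzero, the argument of the previous paragraph applied to $\Pi^\perp$ supplies the desired direction in $H$. This degenerate configuration is exactly where I expect the only delicate point to arise: there the pairing $\langle\omega_p,\pi\rangle$ vanishes identically and carries no direct information, and the trick of replacing $\Pi$ with $\Pi^\perp$ through Lemma~\ref{principal-angles-orthogonal} is what rescues the argument.
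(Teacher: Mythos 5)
Your argument is correct and takes a genuinely different route from the paper's. The paper splits on whether $\Pi\subset H$ or $\Pi$ is transversal to $H$; in the transversal case it represents the normal plane $T_p\Sigma^\perp$ by the decomposable bivector $\nu_p\wedge e_4$ and differentiates the constant quantity $\langle\nu_p\wedge e_4,\, d\wedge w\rangle$ to extract the constancy of $\langle\nu_p,d\rangle$ with $d\in\Pi\cap H$. You instead work with the tangent bivector $\omega_p\in\Lambda^2 H$, decompose $\pi$ along $\Lambda^2\R^4=\Lambda^2 H\oplus(N\wedge H)$, and use the fact that every bivector in $\Lambda^2 H$ is simple, so that Hodge duality in $H\cong\R^3$ turns the constant pairing $\langle\omega_p,\pi_0\rangle$ directly into a constant angle between the surface normal in $H$ and a fixed direction $n_P$. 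Your case split ($\pi_0\neq 0$ versus $\pi_0=0$, i.e.\ $N\notin\Pi$ versus $N\in\Pi$) differs from the paper's but covers all configurations, and the reduction of the $\pi_0=0$ case to the other via Lemma \ref{principal-angles-orthogonal} is clean and, in that configuration, recovers the same direction $d\in\Pi\cap H$ as the paper. The algebraic route is more uniform — your generic case subsumes both the paper's trivial first case and the bulk of its second — and replaces a derivative computation with pure linear algebra. When $\Pi$ is transversal and $N\notin\Pi$ the two proofs actually produce orthogonal helix directions ($d\in\Pi\cap H$ for the paper, $n_P\perp d$ for you); this is not a contradiction, since the proposition asserts only existence, and having two orthogonal helix directions simply reflects the severe constraint placed on $\Sigma$ in that configuration.
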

\begin{proof}
\emph{First case:} $\Pi$ is contained in the hyperplane $\R^3$. Let $d$ be a unit vector in $\R^3$ normal to $\Pi$. Then $\Sigma$ is a helix surface with respect to the direction $d$. 
\emph{Second case:} $\Pi$ is transversal to $\R^3$. Thus $\Pi \cap \R^3$ is a line $l$ in $\R^3$. Let us denote by $d$ a fixed unit direction in $\R^3$ parallel to $l$. We prove that $\Sigma$ is a helix surface with respect to the direction $d$. For this, consider $e_4$ a unit vector normal to $\R^3$ in $\R^4$ and $\xi$ a local unit vector field orthogonal to $\Sigma$ in $\R^3$. Since $(\xi,e_4)$ is an orthonormal basis of $T\Sigma^{\perp},$ the bivector $\xi\wedge e_4$ represents the normal plane $T\Sigma^{\perp}.$ By hypothesis, for every $p \in \Sigma$, $T_p\Sigma$ and $\Pi$ have constant principal angles, and by Lemma \ref{principal-angles-orthogonal} $T_pM^\perp$ and $\Pi$ also have constant principal angles. Let $w \in \R^4$ be a fixed direction such that $(d,w)$ is an orthonormal basis of $\Pi$. We conclude that $\langle \xi \wedge e_4 , d \wedge w \rangle :=\langle \xi, d \rangle \langle e_4, w \rangle-\langle \xi, w \rangle \langle e_4,d \rangle =\langle \xi, d \rangle \langle e_4, w \rangle$ is constant (see Lemma \ref{lemaformulatheta}). Taking the derivative along a direction $T$ tangent to $\Sigma$ we get
\begin{eqnarray*}
0= T.(\langle \xi \wedge e_4 , d \wedge w \rangle)&=&(T.\langle \xi, d \rangle) \langle e_4, w \rangle+
\langle \xi, d \rangle (T .\langle e_4, w \rangle)\\
&=&(T.\langle \xi, d \rangle) \langle e_4, w \rangle
\end{eqnarray*} 
since $\langle e_4, w \rangle$ is constant ($e_4$ and $w$ are fixed directions). Finally, let us observe that $ \langle e_4, w \rangle \neq 0:$ otherwise $w$ would be in $\R^3,$ which is not possible since $\Pi,$ transversal to $\R^3,$ is generated by the pair $(d, \ w),$ with $d$ belonging to $\R^3.$ Therefore $T.\langle \xi, d \rangle =0,$ which means that $\langle \xi, d \rangle$ is constant along $\Sigma$. This is equivalent to say that $\Sigma$ is a helix surface with respect to the direction $d$.
\end{proof}

\begin{ejem}\emph{A helix in $\R^4$ which is full and is not a Riemannian product of two curves.} Let $\Pi$ be a two-dimensional subspace of $\R^4$ and let $G$ be the group of all isometries of $\R^4$ that fix pointwise $\Pi$. So, $G$ is isomorphic to the group $SO(2)$. Let $\gamma$ be a connected regular curve in $\R^4$, whose tangent lines make a constant angle with the plane $\Pi$. We define an immersed surface $\Sigma$ in $\R^4$ by taking $\Sigma:=G \cdot \gamma$, the orbit of $\gamma$ under the action of $G$. Let us observe that $\Sigma$ is foliated by its geodesics $g \cdot \gamma$, for every $g \in G$. The other curves $G \cdot p$ for every $p \in  \gamma$ (these curves are planar circles in $\R^4$) are orthogonal to such family of geodesics in $\Sigma$. Let us observe that the geodesics on $\Sigma$ given by $g \cdot \gamma$ have the same property as the original $\gamma$: their tangent lines make the same constant angle with respect to the plane $\Pi$, since $G$ consists of isometries in $\R^4$ that fix pointwise $\Pi$.
\end{ejem}

\section{Characterization of helix surfaces using the Gauss map}
\label{sec:gauss-map}
The Grassmannian of the oriented 2-planes in $\R^4$ identifies with the set
$$Q=\{\eta\in\Lambda^2\R^4:\ \langle\eta,\eta\rangle=1,\ \eta\wedge\eta=0\}$$
of unit and decomposable bivectors of $\R^4.$ Recall that the Hodge operator is the symmetric map $*:\Lambda^2\R^4\rightarrow\Lambda^2\R^4$ such that $\langle\eta,*\eta'\rangle=\eta\wedge\eta'$ for all $\eta,\eta'\in\Lambda^2\R^4,$ where $\Lambda^4\R^4$ is identified with $\R$ using the canonical volume form on $\R^4.$ Since $**=id_{\Lambda^2\R^4},$ $\Lambda^2\R^4$ splits into the orthogonal sum
$$\Lambda^2\R^4=E^+\oplus E^-$$
where $E^+=\{\eta:*\eta=\eta\}$ and $E^-=\{\eta:*\eta=-\eta\},$ and the natural map
\begin{eqnarray*}
\Lambda^2\R^4&\rightarrow&E^+\oplus E^{-}\\
\eta&\mapsto&(\eta^+,\eta^-),
\end{eqnarray*}
induces an isometry between $Q$ and the product of spheres $S^2(\sqrt{2}/2)\times S^2(\sqrt{2}/2).$ Consider the Gauss map
\begin{eqnarray*}
G:\Sigma&\rightarrow&S^2(\sqrt{2}/2)\times S^2(\sqrt{2}/2)\\
x&\mapsto& \left((e_1\wedge e_2)^+,(e_1\wedge e_2)^-\right)
\end{eqnarray*}
where $(e_1,e_2)$ is a positively oriented and orthonormal basis of $T_x\Sigma.$ We first give a characterization of an helix surface in terms of its Gauss map image:
\begin{prop}\label{proposition_Gaussmapimage}
$\Sigma$ is an immersed helix surface in $\R^4$ with respect to a plane if and only if its Gauss map image belongs to a product of circles in $S^2(\sqrt{2}/2)\times S^2(\sqrt{2}/2).$
\end{prop}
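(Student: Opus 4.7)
The plan is to translate the geometric condition of constant principal angles into linear constraints on the self-dual and anti-self-dual parts of the Gauss map, using the bivector formalism of Section~\ref{sectionanglesbivectors}. I would first fix an oriented orthonormal basis $(w_1,w_2)$ of $\Pi$ and set $\pi := w_1\wedge w_2$, a unit decomposable bivector. The conditions $|\pi|=1$ and $\pi\wedge\pi = \langle\pi,*\pi\rangle\,\mathrm{vol} = 0$ force $|\pi^+|=|\pi^-|=\sqrt{2}/2$; likewise for the tangent bivector $\eta(x)=\eta^+(x)+\eta^-(x)$. Since $*\pi=\pi^+-\pi^-$, a direct computation gives
\[
\cos\theta(x) = \langle\eta^+(x),\pi^+\rangle + \langle\eta^-(x),\pi^-\rangle, \qquad \cos\theta^\perp(x) = \langle\eta^+(x),\pi^+\rangle - \langle\eta^-(x),\pi^-\rangle,
\]
for the oriented angles $\theta,\theta^\perp$ of $T_x\Sigma$ with $\Pi$ and $\Pi^\perp$ introduced in Section~\ref{sectionanglesbivectors}.

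Next I would invoke Lemma~\ref{lemaformulatheta}: $|\cos\theta|=\cos\theta_1\cos\theta_2$ and $|\cos\theta^\perp|=\sin\theta_1\sin\theta_2$. Their sum and difference are $\cos(\theta_2-\theta_1)$ and $\cos(\theta_1+\theta_2)$, and they determine the ordered pair $(\theta_1,\theta_2)\in[0,\pi/2]^2$ uniquely. Thus the principal angles are constant along $\Sigma$ iff $|\cos\theta|$ and $|\cos\theta^\perp|$ are constant; by continuity of $\cos\theta,\cos\theta^\perp$ on the connected surface $\Sigma$, this is in turn equivalent to $\langle\eta^+,\pi^+\rangle$ and $\langle\eta^-,\pi^-\rangle$ being constant on $G(\Sigma)$. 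This yields the forward direction: $G(\Sigma)$ is contained in the product of the two level sets $\{\langle\eta^\pm,\pi^\pm\rangle=c^\pm\}\cap S^2(\sqrt{2}/2)$, each a circle of its respective sphere.

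For the converse, given $G(\Sigma)\subset C^+\times C^-$ with each $C^\pm\subset S^2(\sqrt{2}/2)$ a circle, I would write $C^\pm=\{x\in S^2(\sqrt{2}/2):\langle x,v^\pm\rangle=c^\pm\}$ with $v^\pm\in E^\pm$. The pair $(v^\pm,c^\pm)$ is defined up to a common positive rescaling, so I can normalize $|v^+|=|v^-|=\sqrt{2}/2$. Then $\pi:=v^++v^-$ has unit norm and satisfies $\pi\wedge\pi=(|v^+|^2-|v^-|^2)\,\mathrm{vol}=0$, hence is decomposable and defines an oriented plane $\Pi$; re-reading the formulas of the first paragraph with this $\Pi$ gives constancy of $\cos\theta$ and $\cos\theta^\perp$, and therefore of the principal angles. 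The main subtle point, and I think the only one, is this rescaling step: it uses crucially that decomposability of $\pi=\pi^++\pi^-$ amounts to the single equality $|\pi^+|=|\pi^-|$, which can always be achieved by scaling the linear forms that cut out the circles without altering the circles themselves.
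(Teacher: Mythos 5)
Your proof is correct and follows essentially the same route as the paper: both translate the constancy of the principal angles into constancy of $\langle\eta^\pm,\pi^\pm\rangle$ via the self-dual/anti-self-dual splitting and Lemma~\ref{lemaformulatheta}. You spell out two points the paper leaves implicit---that the sum and difference $\cos(\theta_2-\theta_1)$, $\cos(\theta_1+\theta_2)$ invert the map $(\theta_1,\theta_2)\mapsto(|\cos\theta|,|\cos\theta^\perp|)$, and that in the converse any pair of circles can be cut out by forms $v^\pm$ normalized to $|v^+|=|v^-|=\sqrt 2/2$ so that $v^++v^-$ is automatically decomposable---but the underlying argument is the same.
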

\begin{proof}
Fix $\Pi$ an oriented plane of $\R^4,$ represented by $(\eta_o^+,\eta_o^-)\in  S^2(\sqrt{2}/2)\times S^2(\sqrt{2}/2).$ For $x\in \Sigma,$ $G(x)=(\eta^+,\eta^-)$ represents the plane $T_x\Sigma,$ with its orientation. We define the two angles $\alpha^+,\alpha^-$ by the formulae
$$\cos(\alpha^+)=2\langle \eta_o^+,\eta^+\rangle,\hspace{1cm}\cos(\alpha^-)=2\langle \eta_o^-,\eta^-\rangle.$$
The formulae
$$\langle\eta_o,\eta\rangle=\langle\eta_o^+,\eta^+\rangle+\langle\eta_o^-,\eta^-\rangle,\hspace{1cm}\langle\eta_o,*\eta\rangle=\langle\eta_o^+,\eta^+\rangle-\langle\eta_o^-,\eta^-\rangle$$
read
$$\cos\theta=\frac{1}{2}(\cos\alpha^++\cos\alpha^-),\hspace{1cm}\cos\theta^{\perp}=\frac{1}{2}(\cos\alpha^+-\cos\alpha^-),$$
where $\theta$ and $\theta^{\perp}$ are the angles between $\Pi$ and $T_x\Sigma,$  and $\Pi$ and $T_x\Sigma^{\perp}$ defined Section \ref{sectionanglesbivectors}. We thus have
\begin{equation}\label{alpha-theta}
\cos\alpha^+=\cos\theta+\cos\theta^{\perp}\hspace{1cm}\mbox{and}\hspace{1cm}\cos\alpha^-=\cos\theta-\cos\theta^{\perp}.
\end{equation}
By Lemma \ref{lemaformulatheta} we deduce that the angles $\alpha^+$ and $\alpha^-$ are constant if and only if the principal angles $\theta_1,\theta_2$ are, and thus that $\Sigma$ is an helix surface with respect to $\Pi$ if and only if its Gauss map image belongs to a product of circles centered at $\eta_0^+$ and $\eta_0^-$ in $S^2(\sqrt{2}/2)\times S^2(\sqrt{2}/2).$
\end{proof}

\begin{prop}
If $\Sigma$ is a compact immersed helix surface in $\R^4$ with respect to a plane $\Pi$,
then it has constant principal angles equal to zero and $\pi/2.$ That means that its Gauss map image is a product of two equators in $S^2(\sqrt{2}/2)\times S^2(\sqrt{2}/2).$
\end{prop}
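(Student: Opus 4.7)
The first assertion of this proposition is already established by Proposition \ref{mejora-enel-caso-compacto}, so the only new content is the identification of the Gauss map image as a product of two equators. My plan is to combine Proposition \ref{mejora-enel-caso-compacto} with Proposition \ref{proposition_Gaussmapimage} via the angle formulae from Lemma \ref{lemaformulatheta} and the relation (\ref{alpha-theta}).

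First, by Proposition \ref{mejora-enel-caso-compacto} the compactness hypothesis forces the constant principal angles to be $\theta_1 = 0$ and $\theta_2 = \pi/2$. Substituting these into the identities of Lemma \ref{lemaformulatheta} gives
\[
|\cos\theta| = \cos\theta_1 \cos\theta_2 = 0, \qquad |\cos\theta^{\perp}| = \sin\theta_1 \sin\theta_2 = 0,
\]
so both $\cos\theta$ and $\cos\theta^{\perp}$ vanish identically on $\Sigma$.

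Next, I feed these vanishings into the relations (\ref{alpha-theta}) to obtain
\[
\cos\alpha^+ = \cos\theta + \cos\theta^{\perp} = 0, \qquad \cos\alpha^- = \cos\theta - \cos\theta^{\perp} = 0,
\]
hence $\alpha^+ = \alpha^- = \pi/2$. By the definition of $\alpha^\pm$ in the proof of Proposition \ref{proposition_Gaussmapimage}, this means that at every point of $\Sigma$ the projections $\eta^\pm$ of the Gauss map are orthogonal to the fixed vectors $\eta_0^\pm$ representing the plane $\Pi$. Therefore the Gauss map image lies in the product of the two great circles (equators) in $S^2(\sqrt{2}/2) \times S^2(\sqrt{2}/2)$ cut out by the hyperplanes orthogonal to $\eta_0^+$ and $\eta_0^-$ respectively.

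There is no real obstacle here: the statement is a direct corollary of two earlier results, and the computation only requires checking that the values $\theta_1=0$, $\theta_2=\pi/2$ send both $\cos\alpha^+$ and $\cos\alpha^-$ to zero. The proof is essentially three lines once one invokes the correct pair of previously established propositions.
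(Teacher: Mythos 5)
Your argument is correct and follows exactly the paper's own route: invoke Proposition \ref{mejora-enel-caso-compacto} for the first claim, then use Lemma \ref{lemaformulatheta} together with the relations (\ref{alpha-theta}) to conclude $\alpha^+=\alpha^-=\pi/2$, i.e.\ the Gauss map image lies in a product of equators. You simply spell out the intermediate substitutions that the paper leaves implicit.
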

\begin{proof}
The first part was proved in Proposition \ref{mejora-enel-caso-compacto}. Lemma \ref{lemaformulatheta} and formulae (\ref{alpha-theta}) imply that the two angles $\alpha^+$ and $\alpha^-$ between $\Pi$ and $T_p\Sigma$ are equal to $\pi/2,$ and thus that the Gauss map image is a product of two equators in $S^2(\sqrt{2}/2)\times S^2(\sqrt{2}/2).$
\end{proof}

\section{Structure equations}\label{section structure equations}

In this section we compute the structure equations (see \cite[p.10]{BCO}) of a helix surface in a frame adapted to the helix structure.

Let $\Sigma \subset \mathbb{R}^4$ be a surface with constant principal angles with respect to the plane $\Pi \subset \mathbb{R}^4$.
Let $T_1,T_2 \in \Gamma(\mathrm{T}\Sigma)$ be a local frame such that $T_1(p)$ and $T_2(p)$ are unit eigenvectors of $S_{\Pi T_p\Sigma}$ at every point $p\in\Sigma,$ and let $e_1,e_2$ be the corresponding frame of $\Pi,$ defined by
\[ \begin{array}{lcl}
     e_1 & = & \cos(\theta_1)T_1 + \sin(\theta_1)\xi_1 \, ,\\
     e_2 & = & \cos(\theta_2)T_2 + \sin(\theta_2)\xi_2\\
   \end{array}
\]
where $\xi_1,\xi_2$ are normal vector fields ($\theta_1$ and $\theta_2$ still denote the constant principal angles). Note that $T_1$ and $T_2$ (and thus $e_1,$ $e_2,$ $\xi_1$ and $\xi_2$) do exist since the eigenvalues of $S_{\Pi T_p\Sigma}$ are constant. Let $X$ be a vector field of $\Sigma$. Taking derivatives in both hands we get
\[ \begin{array}{lcl}
     D_X e_1 & = & \cos(\theta_1)D_X T_1 + \sin(\theta_1)D_X\xi_1\\
             & = & \cos(\theta_1)(\nabla_X T_1 + \alpha(X,T_1) ) + \sin(\theta_1)(\nabla^{\perp}_X\xi_1 - A_{\xi_1}(X))\\
             & = &  \cos(\theta_1)\nabla_X T_1 - \sin(\theta_1)A_{\xi_1}(X) + \cos(\theta_1)\alpha(X,T_1) + \sin(\theta_1)\nabla^{\perp}_X\xi_1
   \end{array}
\]
and
\[ \begin{array}{lcl}
     D_X e_2 & = & \cos(\theta_2)D_X T_2 + \sin(\theta_2)D_X\xi_2\\
             & = & \cos(\theta_2)(\nabla_X T_2 + \alpha(X,T_2) ) + \sin(\theta_2)(\nabla^{\perp}_X\xi_2 - A_{\xi_2}(X))\\
             & = &  \cos(\theta_2)\nabla_X T_2 - \sin(\theta_2)A_{\xi_2}(X) + \cos(\theta_2)\alpha(X,T_2)+\sin(\theta_2)\nabla^{\perp}_X\xi_2.
   \end{array}
\]

We can regard $\Sigma \times \Pi \rightarrow \Sigma$ as a trivial bundle endowed with a flat connection $D$. Then, there exists a function $f:\Sigma \rightarrow \mathbb{R}$ such that
\begin{equation*}
D_X e_1= df(X)e_2 \hspace{1cm}\mbox{and}\hspace{1cm}D_X e_2 =  -df(X)e_1.
\end{equation*}
Then from the above equations we get
\[ \begin{array}{ccc}
\cos(\theta_2)df(X)T_2 + \sin(\theta_2)df(X)\xi_2 &=&  \cos(\theta_1)\nabla_X T_1 - \sin(\theta_1)A_{\xi_1}(X)\\
                                                  & &  +\cos(\theta_1)\alpha(X,T_1) + \sin(\theta_1)\nabla^{\perp}_X\xi_1 \, \, , \\
-\cos(\theta_1)df(X)T_1 - \sin(\theta_1)df(X)\xi_1 &=& \cos(\theta_2)\nabla_X T_2 - \sin(\theta_2)A_{\xi_2}(X)\\
                                                  & &+ \cos(\theta_2)\alpha(X,T_2)+\sin(\theta_2)\nabla^{\perp}_X\xi_2.
\end{array}
 \]
Taking the normal and the tangent components we get
\begin{equation} \label{tangente}
\begin{array}{rcl}
\cos(\theta_2)df(X)T_2  &=& \cos(\theta_1)\nabla_X T_1 - \sin(\theta_1)A_{\xi_1}(X)  \, , \\
-\cos(\theta_1)df(X)T_1 &=& \cos(\theta_2)\nabla_X T_2 - \sin(\theta_2)A_{\xi_2}(X)
   \end{array}
\end{equation}
\begin{equation} \label{normal}
\begin{array}{rcl}
 \sin(\theta_2)df(X)\xi_2 &=&  \cos(\theta_1)\alpha(X,T_1) + \sin(\theta_1)\nabla^{\perp}_X\xi_1 \, \, , \\
 - \sin(\theta_1)df(X)\xi_1 &=& \cos(\theta_2)\alpha(X,T_2) + \sin(\theta_2)\nabla^{\perp}_X\xi_2.
   \end{array}
\end{equation}

Here is the first consequence of the above equations.

\begin{lema}\label{flat}
The Levi-Civita connection and the normal connection are flat.
\end{lema}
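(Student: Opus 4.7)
The plan is to read off from the normal structure equations (\ref{normal}) the precise form of the second fundamental form $\alpha$ in the adapted normal frame $\{\xi_1,\xi_2\}$, and then to conclude by the Gauss and Ricci equations for a surface in $\mathbb{R}^4$.

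First I would substitute $\nabla^{\perp}_X\xi_1=\omega^{\perp}(X)\xi_2$ and $\nabla^{\perp}_X\xi_2=-\omega^{\perp}(X)\xi_1$, where $\omega^{\perp}$ is the normal connection $1$-form, into (\ref{normal}). In the generic case $\cos\theta_1,\cos\theta_2\neq 0$, the two equations solve uniquely for $\alpha(X,T_1)$ and $\alpha(X,T_2)$, showing that $\alpha(\cdot,T_1)$ is proportional to $\xi_2$ while $\alpha(\cdot,T_2)$ is proportional to $\xi_1$. Since $\alpha$ is symmetric, $\alpha(T_1,T_2)=\alpha(T_2,T_1)$ lies simultaneously in $\mathrm{span}(\xi_2)$ and $\mathrm{span}(\xi_1)$, hence vanishes. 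Consequently $\alpha(T_1,T_1)\parallel\xi_2$, $\alpha(T_2,T_2)\parallel\xi_1$, and $\alpha(T_1,T_2)=0$.

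The Gauss equation for a surface in $\mathbb{R}^4$ then gives
\[
K=\langle\alpha(T_1,T_1),\alpha(T_2,T_2)\rangle-\|\alpha(T_1,T_2)\|^{2}=0
\]
at once, since the first term vanishes by $\xi_1\perp\xi_2$; hence the Levi-Civita connection is flat. For the normal connection, the block form of $\alpha$ above translates, via $\langle A_{\xi_j}(X),Y\rangle=\langle\alpha(X,Y),\xi_j\rangle$, into $A_{\xi_1}(T_1)=0$, $A_{\xi_1}(T_2)=a T_2$, $A_{\xi_2}(T_1)=b T_1$, $A_{\xi_2}(T_2)=0$ for some functions $a,b$, so $A_{\xi_1}$ and $A_{\xi_2}$ are simultaneously diagonal in $\{T_1,T_2\}$ and therefore commute. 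The Ricci equation $K^{\perp}=\pm\langle[A_{\xi_1},A_{\xi_2}]T_1,T_2\rangle$ then yields $K^{\perp}=0$, so $\nabla^{\perp}$ is flat as well.

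The main obstacle, and the step I would save for last, is the treatment of the non-generic values of $(\theta_1,\theta_2)$. When $\cos\theta_1=0$ the ordering $\theta_1\leq\theta_2$ forces $\theta_1=\theta_2=\pi/2$, so that $\Pi$ is the (constant) normal plane of $\Sigma$ and $\Sigma$ is locally an affine plane, making the conclusion trivial. When $\cos\theta_2=0$ but $\cos\theta_1\neq 0$, one equation of (\ref{normal}) degenerates; but then the tangent equations (\ref{tangente}) determine $A_{\xi_1}$ and $A_{\xi_2}$ directly, and the symmetry of each shape operator forces the same diagonal block structure, giving again $K=K^{\perp}=0$.
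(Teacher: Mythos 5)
Your proof is essentially the same as the paper's. The paper simply records that equations \eqref{normal} force $\alpha(T_1,T_2)=0$ and $\alpha(T_1,T_1)\perp\alpha(T_2,T_2)$ and then invokes Gauss and Ricci; you have filled in exactly how \eqref{normal} yields these facts (solving for $\alpha(X,T_1)\parallel\xi_2$ and $\alpha(X,T_2)\parallel\xi_1$ when $\cos\theta_1,\cos\theta_2\neq 0$, then using symmetry of $\alpha$), and you then observe that $A_{\xi_1}$, $A_{\xi_2}$ are simultaneously diagonal in $\{T_1,T_2\}$, hence commute. This is the paper's argument, spelled out. Your extra paragraph on the degenerate angles $\cos\theta_1=0$ and $\cos\theta_2=0$ (using the tangential equations \eqref{tangente} and symmetry of the shape operators, or the observation that $\theta_1=\theta_2=\pi/2$ makes the tangent plane constant) is a genuine improvement in completeness over the paper's terse proof, which silently treats only the case where both cosines are nonzero.
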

\begin{proof} Indeed, from equations (\ref{normal}) it follows that 
\[ \alpha(T_1,T_2) = 0\hspace{1cm} \mbox{ and }\hspace{1cm}\alpha(T_1,T_1) \perp \alpha(T_2,T_2) \, .\]
Now the first claim follows from Gauss equation and the second from Ricci equation.
\end{proof}

In the frame $T_1,T_2$ we have \[ A_{\xi_1} = \left(
                      \begin{array}{cc}
                        0 & 0 \\
                        0 & m_1 \\
                      \end{array}
                    \right),\hspace{1cm}
A_{\xi_2} = \left(
                      \begin{array}{cc}
                        m_2 & 0 \\
                        0 & 0 \\
                      \end{array}
                    \right)
\] 
where $m_1,m_2:\Sigma\rightarrow\R$ are two smooth functions, and
\[ \alpha(T_1,T_1) = m_2 \xi_2 , \hspace{1cm} \alpha(T_2,T_2) = m_1 \xi_1.\]

Define $t,n:\Sigma\rightarrow\R$ two smooth functions such that
\begin{equation}\label{def t n}
\nabla T_1=dt\ T_2,\ \nabla T_2=-dt\ T_1,\ \nabla^{\perp} \xi_1=dn\ \xi_2\mbox{ and } \nabla^{\perp} \xi_2=-dn\ \xi_1.
\end{equation}
We may thus re-write the structure equations (\ref{tangente})-(\ref{normal}) as follows:
\begin{equation} \label{tangente1}
\begin{array}{rcr}
\cos(\theta_2)df(X)  &=&  \cos(\theta_1)dt(X) - \sin(\theta_1)\langle X,T_2\rangle m_1  \, , \\
-\cos(\theta_1)df(X) &=& -\cos(\theta_2)dt(X) - \sin(\theta_2)\langle X,T_1\rangle m_2 
   \end{array}
\end{equation}
and
\begin{equation} \label{normal1}
\begin{array}{rcl}
 \sin(\theta_2)df(X)&=&  \cos(\theta_1)\langle X,T_1\rangle m_2   + \sin(\theta_1)dn(X) \, \, , \\
 - \sin(\theta_1)df(X) &=& \cos(\theta_2)\langle X,T_2\rangle m_1  - \sin(\theta_2)dn(X).
   \end{array}
\end{equation}
Observe that equations (\ref{normal1}) imply the existence of two functions $\lambda_1,\lambda_2$ such that
\[ \nabla \lambda_1 = m_1 T_2\hspace{1cm} \mbox{ and }\hspace{1cm}\nabla \lambda_2 = m_2 T_1. \]

\begin{rem}
\em
The fact that $m_1 T_2$ and $m_2 T_1$ are gradients also follows from Codazzi equations (C1) and (C2) below.
\end{rem}

\subsection{The case $\theta_1 = 0$ and $ 0 < \theta_2 < \frac{\pi}{2}$}
\label{subsec:un-angulo-zero}

Under these assumptions equations (\ref{tangente1}) and (\ref{normal1}) are equivalent to
\begin{equation} \label{tangente0}
\begin{array}{rcl}
\cos(\theta_2)df(X)&=&  dt(X)\, , \\
-df(X)&=& -\cos(\theta_2)dt(X)- \sin(\theta_2)\langle X,T_1\rangle m_2
   \end{array}
\end{equation}
and
\begin{equation} \label{normal0}
\begin{array}{rcl}
 \sin(\theta_2)df(X)&=&  \langle X,T_1\rangle m_2 \, \, , \\
 0 &=& \cos(\theta_2)\langle X,T_2\rangle m_1 - \sin(\theta_2)dn(X),
   \end{array}
\end{equation}
and are thus equivalent to
\begin{equation}\label{zero}    d\lambda_1= \tan(\theta_2) dn,\hspace{.5cm}d\lambda_2 = \tan(\theta_2) dt\hspace{.5cm}\mbox{and}\hspace{.5cm}df =\frac{dt}{\cos(\theta_2)}.
\end{equation}
As a consequence we get the following result.
\begin{prop} Under the above assumptions the vector field $T_2$ is geodesic.
\end{prop}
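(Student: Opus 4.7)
The plan is to unpack what it means for $T_2$ to be geodesic and then to read off the required vanishing directly from the structure equations (\ref{def t n}) and (\ref{zero}).

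First, I recall from (\ref{def t n}) that $\nabla T_2 = -dt\, T_1$, so that $\nabla_{T_2} T_2 = -dt(T_2)\,T_1$. Therefore the statement reduces to checking that $dt(T_2)=0$, i.e.\ that the function $t$ has zero derivative in the direction $T_2$.

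Next, I invoke the second identity in (\ref{zero}), namely $d\lambda_2 = \tan(\theta_2)\, dt$. Since $\tan(\theta_2) \neq 0$ in the range $0 < \theta_2 < \pi/2$, showing $dt(T_2)=0$ is equivalent to showing $d\lambda_2(T_2)=0$. But by the very definition of $\lambda_2$ we have $\nabla \lambda_2 = m_2 T_1$, so
\begin{equation*}
d\lambda_2(T_2) = \langle \nabla \lambda_2, T_2\rangle = m_2\,\langle T_1, T_2\rangle = 0,
\end{equation*}
since $(T_1,T_2)$ is orthonormal. Combining these two observations gives $dt(T_2)=0$ and hence $\nabla_{T_2} T_2 = 0$, proving that the integral curves of $T_2$ are geodesics of $\Sigma$.

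There is essentially no obstacle here: the whole argument is a one-line consequence of the structure equations already derived, once one notices that the gradient of $\lambda_2$ is orthogonal to $T_2$. The only place where the hypothesis $0 < \theta_2 < \pi/2$ intervenes is in ensuring $\tan(\theta_2)$ is defined and nonzero, so that (\ref{zero}) really does convert information on $d\lambda_2$ into information on $dt$.
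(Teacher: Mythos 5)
Your proof is correct and is essentially the same as the paper's, which derives $dt(T_2)\equiv 0$ from the second equation of (\ref{zero}) together with $\nabla\lambda_2=m_2T_1$. You have simply spelled out the intermediate steps (the orthogonality $\langle T_1,T_2\rangle=0$ and the nonvanishing of $\tan\theta_2$) that the paper leaves implicit.
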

\begin{proof} Indeed, from the second equation of the system (\ref{zero}) it follows that $dt(T_2) \equiv 0,$ which implies $\nabla_{T_2} T_2 = -dt(T_2) T_1 \equiv 0$.
\end{proof}

\subsection{The generic case}

By the \emph{generic case} we mean the case in which the principal angles $\theta_1,\theta_2 \notin \{0,\frac{\pi}{2}\}$. Under this hypothesis, we may eliminate $df$ in the first equation of (\ref{tangente1}) using successively the second, the third, and the last equation in (\ref{tangente1})-({\ref{normal1}), to get the system
\begin{eqnarray}
\left(\frac{\cos(\theta_1)}{\cos(\theta_2)}-\frac{\cos(\theta_2)}{\cos(\theta_1)}\right)dt &=&   \frac{\sin(\theta_1)}{\cos(\theta_2)}d\lambda_1 + \frac{\sin(\theta_2)}{\cos(\theta_1)}d\lambda_2\label{dependencia1}\\
    -\frac{\sin(\theta_1)}{\sin(\theta_2)}dn + \frac{\cos(\theta_1)}{\cos(\theta_2)} dt &=&   \frac{\sin(\theta_1)}{\cos(\theta_2)}d\lambda_1 + \frac{\cos(\theta_1)}{\sin(\theta_2)}d\lambda_2\label{dependencia2}\\
   -\frac{\sin(\theta_2)}{\sin(\theta_1)} dn + \frac{\cos(\theta_1)}{\cos(\theta_2)} dt &=& \left(\frac{\sin(\theta_1)}{\cos(\theta_2)}- \frac{\cos(\theta_2)}{\sin(\theta_1)}\right)d\lambda_1.\label{dependencia3}
\end{eqnarray}

\subsection{Codazzi equations}

Here we compute the Codazzi equations. Since

\[\begin{array}{lcl}
    (\nabla_{T_1} A_{\xi_1})(T_2) & = & \nabla_{T_1} \left(A_{\xi_1}(T_2)\right) - A_{\nabla^{\perp}_{T_1}\xi_1}(T_2) - A_{\xi_1}(\nabla_{T_1}T_2) \\
     & = & \nabla_{T_1} \left(A_{\xi_1}(T_2)\right) \\
     & = & \nabla_{T_1} (m_1 T_2) \\
     & = & dm_1(T_1) T_2 - m_1 dt(T_1) T_1, \\
  \end{array}
 \]
\[\begin{array}{lcl}
    (\nabla_{T_2} A_{\xi_1})(T_1) & = & \nabla_{T_2} \left(A_{\xi_1}(T_1)\right) - A_{\nabla^{\perp}_{T_2}\xi_1}(T_1) - A_{\xi_1}(\nabla_{T_2}T_1) \\
    & = &  - A_{\nabla^{\perp}_{T_2}\xi_1}(T_1) - A_{\xi_1}(\nabla_{T_2}T_1) \\
    & = &  - A_{dn(T_2)\xi_2}(T_1) - A_{\xi_1}(dt(T_2)T_2) \\
    & = &  -dn(T_2) m_2 T_1  -dt(T_2)m_1 T_2, \\
  \end{array}
 \]
\[\begin{array}{lcl}
    (\nabla_{T_1} A_{\xi_2})(T_2) & = & \nabla_{T_1}\left(A_{\xi_2}(T_2)\right) - A_{\nabla^{\perp}_{T_1}\xi_2}(T_2) - A_{\xi_2}(\nabla_{T_1}T_2) \\
     & = & - A_{\nabla^{\perp}_{T_1}\xi_2}(T_2) - A_{\xi_2}(\nabla_{T_1}T_2) \\
     & = &  A_{dn(T_1)\xi_1}(T_2) + A_{\xi_2}(dt(T_1)T_1) \\
     & = & dn(T_1) A_{\xi_1}(T_2) + dt(T_1)A_{\xi_2}(T_1) \\
     & = & dn(T_1) m_1T_2 + dt(T_1) m_2 T_1 \\
  \end{array}
 \]
 and
\[\begin{array}{lcl}
    (\nabla_{T_2} A_{\xi_2})(T_1) & = & \nabla_{T_2} \left(A_{\xi_2}(T_1)\right) - A_{\nabla^{\perp}_{T_2}\xi_2}(T_1) - A_{\xi_2}(\nabla_{T_2}T_1) \\
     & = & \nabla_{T_2} \left(A_{\xi_2}(T_1)\right) \\
     & = & \nabla_{T_2} (m_2T_1) \\
     & = & dm_2(T_2)T_1 + m_2 dt(T_2)T_2,\\
  \end{array}
 \]
the Codazzi equations are satisfied if and only if
\[\left\{
  \begin{array}{lclr}
    m_1dt(T_2) &=& -dm_1(T_1) & \hspace{3cm} (C1)\\
    m_1dt(T_1) &=& m_2dn(T_2) & \hspace{3cm} (C2)\\
    m_2dt(T_1) &=& dm_2(T_2) & \hspace{3cm} (C3)\\
    m_2dt(T_2) &=& m_1dn(T_1). & \hspace{3cm} (C4)\\
  \end{array}
\right.\]

Observe that $(C1)$ (resp. $(C3)$) is equivalent to $m_1T_2$ (resp. $m_2 T_1$) be a gradient.

\section{Complete surfaces with constant principal angles.}\label{section complete surfaces}

\subsection{The generic case}
Here we show that a complete surface $\Sigma \subset \mathbb{R}^4$ with constant principal angles $0 < \theta_1 < \theta_2 < \frac{\pi}{2}$ is totally geodesic.

From equation (\ref{dependencia1}) the differential $dt$ is a linear combination of $d\lambda_1,d\lambda_2$. Namely,
\[ dt = Ad\lambda_1 + Bd\lambda_2 \]
where $A,B$ are constants which depend on $\theta_1,\theta_2$. The following lemma is crucial.

\begin{lema} 
The following formulae hold:
\label{eq.dif.} \[ \nabla_{T_2} \nabla \lambda_2 = \|\nabla \lambda_2\| (A \nabla \lambda_1 + B \nabla \lambda_2 ) \]
\[ \nabla_{T_1} \nabla \lambda_1 = -\|\nabla \lambda_1\| (A \nabla \lambda_1 + B \nabla \lambda_2 ). \]
\end{lema}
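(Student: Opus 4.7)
The plan is to compute both sides directly, using only the product rule, the definitions of $t$ and $\lambda_i$, and the Codazzi equations $(C1)$ and $(C3)$.

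First I would expand the left-hand side of the second identity via the Leibniz rule: since $\nabla\lambda_1 = m_1 T_2$,
\[
\nabla_{T_1}\nabla\lambda_1 = T_1(m_1)\,T_2 + m_1\,\nabla_{T_1}T_2 = dm_1(T_1)\,T_2 - m_1\,dt(T_1)\,T_1,
\]
using $\nabla T_2 = -dt\,T_1$ from \eqref{def t n}. Codazzi $(C1)$ gives $dm_1(T_1) = -m_1\,dt(T_2)$, so this becomes
\[
\nabla_{T_1}\nabla\lambda_1 = -m_1\bigl(dt(T_1)T_1 + dt(T_2)T_2\bigr) = -m_1\,\nabla t.
\]
The symmetric computation for $\nabla_{T_2}\nabla\lambda_2 = \nabla_{T_2}(m_2T_1)$, using $\nabla T_1 = dt\,T_2$ and Codazzi $(C3)$ $dm_2(T_2) = m_2\,dt(T_1)$, yields $\nabla_{T_2}\nabla\lambda_2 = m_2\,\nabla t$.

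Next I would dualize the relation $dt = A\,d\lambda_1 + B\,d\lambda_2$ coming from \eqref{dependencia1} to obtain $\nabla t = A\,\nabla\lambda_1 + B\,\nabla\lambda_2$, and substitute this into the two expressions above. Finally, since $T_1,T_2$ is an orthonormal frame, $\|\nabla\lambda_1\| = |m_1|$ and $\|\nabla\lambda_2\| = |m_2|$; after fixing the signs of the normal vector fields $\xi_1,\xi_2$ so that $m_1,m_2 \ge 0$ (this does not affect the structure equations), the coefficients $-m_1$ and $m_2$ become exactly $-\|\nabla\lambda_1\|$ and $\|\nabla\lambda_2\|$, which is the desired form.

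This argument is essentially mechanical once the Codazzi equations are in hand, so there is no real obstacle; the only point deserving care is the identification $m_i = \|\nabla\lambda_i\|$, which requires either a sign convention on the $\xi_i$ or the tacit assumption (justified on the open set where $\nabla\lambda_i$ does not vanish) that one may choose the orientation of the normal frame so that $m_i$ is non-negative. One should also note the place where the genericity hypothesis $\theta_1,\theta_2 \notin\{0,\pi/2\}$ enters: it is used to guarantee that \eqref{dependencia1} expresses $dt$ as a genuine linear combination of $d\lambda_1,d\lambda_2$ with the constants $A,B$ depending only on $\theta_1,\theta_2$.
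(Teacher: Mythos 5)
Your proof is correct and follows essentially the same path as the paper's, though it is organized a bit more cleanly. The paper verifies the identity component by component, computing $\langle \nabla_{T_2}\nabla\lambda_2, T_i\rangle$ separately and invoking the symmetry of the Hessian of $\lambda_2$ for the $T_1$-component (which is exactly Codazzi $(C3)$, as the paper itself notes after the Codazzi equations); you instead compute the covariant derivative $\nabla_{T_i}\nabla\lambda_i$ directly by Leibniz, use Codazzi $(C1)$ and $(C3)$ to convert the $dm_i$ term, and obtain the tidy intermediate identities $\nabla_{T_2}\nabla\lambda_2 = m_2\,\nabla t$ and $\nabla_{T_1}\nabla\lambda_1 = -m_1\,\nabla t$ before substituting $\nabla t = A\,\nabla\lambda_1 + B\,\nabla\lambda_2$. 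This is a minor but genuine improvement in presentation: it isolates the relation $\nabla_{T_i}\nabla\lambda_i = \pm m_i\,\nabla t$, which makes the structure transparent and makes it obvious that the constants $A,B$ enter only through the linear relation $dt = A\,d\lambda_1 + B\,d\lambda_2$. Your remark about the sign ambiguity $\|\nabla\lambda_i\| = |m_i|$ is well taken; the paper's computation equates $m_2$ with $\|\nabla\lambda_2\|$ in its last line without comment, so the same tacit sign normalization is present there too, and as you observe it is harmless because the lemma is only used through the squared quantities $\|\nabla\lambda_i\|^2$ in Theorem \ref{thm:generic-complete}.
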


\begin{proof} Keep in mind that \[ \nabla \lambda_1 = m_1 T_2\hspace{1cm}\mbox{ and }\hspace{1cm}\nabla \lambda_2 = m_2 T_1 \, .\]

For the first equality we have to show that
 \[ \langle \nabla_{T_2} \nabla \lambda_2, T_1 \rangle = \|\nabla \lambda_2\| B \langle \nabla \lambda_2, T_1 \rangle \hspace{1cm} (*) \]
 and
\[ \langle \nabla_{T_2} \nabla \lambda_2, T_2 \rangle = \|\nabla \lambda_2\| A \langle \nabla \lambda_1, T_2 \rangle. \hspace{1cm} (**)\]
We compute 
\[\begin{array}{lcl}
\langle \nabla_{T_2} \nabla \lambda_2, T_2 \rangle & = & \langle \nabla_{T_2} m_2 T_1, T_2 \rangle  \\
 & = & m_2 \langle \nabla_{T_2}T_1, T_2 \rangle  \\
 & = & m_2 \langle dt(T_2)T_2  , T_2 \rangle \\
   & = & m_2\langle (A d\lambda_1(T_2) + B d\lambda_2(T_2)) T_2, T_2 \rangle\\
   & = & m_2 \langle (A d\lambda_1(T_2)) T_2, T_2 \rangle  \\
   & = & m_2 A d\lambda_1(T_2) \\
   & = & \|\nabla \lambda_2\| A \langle \nabla \lambda_1, T_2 \rangle,
\end{array}\]
which proves $(**)$. The proof of $(*)$ is analogous:
\[\begin{array}{lcl}
\langle \nabla_{T_2} \nabla \lambda_2, T_1 \rangle & = & \langle \nabla_{T_1} \nabla \lambda_2, T_2 \rangle  \\
 & = & m_2 \langle \nabla_{T_1}T_1, T_2 \rangle  \\
 & = & m_2 \langle dt(T_1)T_2  , T_2 \rangle \\
   & = & m_2\langle (B d\lambda_2(T_1)) T_2, T_2 \rangle\\
   & = & m_2 \langle (B d\lambda_2(T_1)) T_2, T_2 \rangle  \\
   & = & m_2 B d\lambda_2(T_1) \\
   & = & \|\nabla \lambda_2\| B \langle \nabla \lambda_2, T_1\rangle.
\end{array}\]
The proof of the second equality is analogous and is therefore omitted. This proves the lemma.
\end{proof}

\begin{theorem}
\label{thm:generic-complete}
Assume that $\Sigma \subset \mathbb{R}^4$ with constant principal angles $\theta_1,\theta_2$ such that $0 < \theta_1 < \theta_2 < \frac{\pi}{2}$ is a complete surface. Then $\Sigma$ is totally geodesic.
\end{theorem}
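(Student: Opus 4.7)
The aim is to show that the second fundamental form vanishes, i.e.\ $m_1 \equiv m_2 \equiv 0$ in the adapted frame of Section \ref{section structure equations} (recall $\alpha(T_1,T_1) = m_2\xi_2$, $\alpha(T_2,T_2) = m_1\xi_1$, $\alpha(T_1,T_2) = 0$). The plan is to extract from Codazzi's equations two autonomous scalar Riccati ODEs, one for $m_1$ along integral curves of $T_1$ and one for $m_2$ along integral curves of $T_2$, and then rule out nonzero solutions using completeness.

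First I would evaluate $dt$ on the adapted frame using the identities $\nabla\lambda_1 = m_1 T_2$ and $\nabla\lambda_2 = m_2 T_1$, together with the consequence $dt = A\,d\lambda_1 + B\,d\lambda_2$ of (\ref{dependencia1}). This gives $dt(T_2) = A m_1$ and $dt(T_1) = B m_2$. Feeding these into the Codazzi equations $(C1)$ and $(C3)$ produces the decoupled relations
\[
dm_1(T_1) = -A\, m_1^2, \qquad dm_2(T_2) = B\, m_2^2,
\]
and inverting (\ref{dependencia1}) one finds $A = \sin\theta_1\cos\theta_1/(\cos^2\theta_1-\cos^2\theta_2)$ and $B = \sin\theta_2\cos\theta_2/(\cos^2\theta_1-\cos^2\theta_2)$, both nonzero under the genericity hypothesis $0 < \theta_1 < \theta_2 < \pi/2$.

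To conclude, fix $p \in \Sigma$ and let $\alpha: \mathbb{R} \to \Sigma$ be the maximal integral curve of the (locally defined) unit vector field $T_1$ through $p$, extended consistently along the curve using continuity of the eigenspace of $\mathcal{S}_{\Pi T_p\Sigma}$. Because $|T_1| \equiv 1$ and $\Sigma$ is complete, a standard Cauchy-sequence argument shows that $\alpha$ is defined on all of $\mathbb{R}$. The function $g(s) := m_1(\alpha(s))$ then satisfies the Riccati equation $g'(s) = -A g(s)^2$; its only global solution is $g \equiv 0$, since every nonzero solution has the form $g(s) = g(0)/(1+As g(0))$ and blows up in finite time. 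Hence $m_1(p) = 0$, and the identical argument along integral curves of $T_2$ gives $m_2 \equiv 0$, so $\Sigma$ is totally geodesic. The crux of the argument is the \emph{decoupling}, namely that $dm_1(T_1)$ depends only on $m_1$ (and $dm_2(T_2)$ only on $m_2$); this is what the Codazzi equations $(C1)$ and $(C3)$ furnish once $dt$ has been re-expressed in terms of $\lambda_1$ and $\lambda_2$, and it is the precise reason the generic regime is needed.
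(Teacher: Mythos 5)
Your proof is correct and follows essentially the same strategy as the paper's: reduce the problem to a scalar Riccati ODE along integral curves of the principal directions and invoke completeness to force the coefficient of the second fundamental form to vanish. The paper packages the intermediate computation as Lemma \ref{eq.dif.}, namely $\nabla_{T_2}\nabla\lambda_2 = \|\nabla\lambda_2\|(A\nabla\lambda_1+B\nabla\lambda_2)$, and then projects onto $\nabla\lambda_2$ to obtain $\left(\tfrac{f^2}{2}\right)' = Bf^3$ for $f=\|\nabla\lambda_2\|=|m_2|$. You bypass this lemma and go straight to $dm_2(T_2)=Bm_2^2$ and $dm_1(T_1)=-Am_1^2$ by feeding $dt(T_1)=Bm_2$ and $dt(T_2)=Am_1$ into the Codazzi identities $(C1)$ and $(C3)$; this is a genuinely cleaner derivation, and it also avoids a small sign subtlety in the paper's lemma, where $m_2$ is silently replaced by $\|\nabla\lambda_2\|=|m_2|$ (harmless for the conclusion, since $f\equiv 0$ either way, but your formulation sidesteps it). Your completeness argument --- the unit field $T_1$ has complete flow on a complete manifold, so a Riccati solution defined on all of $\mathbb{R}$ must vanish --- matches what the paper means by ``since the flow of $T_2$ is complete it is not difficult to see that $f(t)\equiv 0$''. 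The remark about choosing a continuous local branch of $T_1$ along the curve is the right thing to say and is consistent with the ODE being invariant under $T_1\mapsto -T_1$.
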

\begin{proof}
Indeed, taking inner product with $\nabla \lambda_2$ in both sides of the first equation of the above lemma we get
\[ \langle \nabla_{T_2} \nabla \lambda_2, \nabla \lambda_2 \rangle = \|\nabla \lambda_2\|B \langle \nabla \lambda_2, \nabla \lambda_2 \rangle. \]
Then along the flow $F_t^{T_2}$ of the vector field $T_2$ the function $f(t) = \|\nabla \lambda_2\|$ satisfies
\[ \left(\frac{f^2}{2}\right)' = B f^3. \]
Observe that $B \neq 0$ because $\theta_2 \neq 0$. Since the flow of $T_2$ is complete it is not difficult to see that $f(t) \equiv 0$.
This shows $m_2 \equiv 0$.

Analogously, taking the inner product with $\nabla \lambda_1$ in both sides of the second equation of the above lemma we get
\[ \langle \nabla_{T_1} \nabla \lambda_1, \nabla \lambda_1 \rangle = -\|\nabla \lambda_1\|A \langle \nabla \lambda_1, \nabla \lambda_1 \rangle. \]
Then along the flow $F_t^{T_1}$ of the vector field $T_1$ the function $g(t) = \|\nabla \lambda_1\|$ satisfies
\[ \left(\frac{g^2}{2}\right)' = -A g^3.\]
Note that $A \neq 0$ because $\theta_1 \neq 0$. Since the flow of $T_1$ is complete we get that $g(t) \equiv 0$. This shows $m_1 \equiv 0$. Thus $\alpha \equiv 0$ and $\Sigma$ is totally geodesic.
\end{proof}

\begin{cor}
\label{cor:symplectomorphism-affine}
Let $f:\mathbb{R}^2 \rightarrow \mathbb{R}^2$ by a symplectomorphism and let $Jf$ be its Jacobian matrix.
If $\| Jf \|$ is a constant function then $f$ is affine.
\end{cor}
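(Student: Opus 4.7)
The plan is to realize $f$ geometrically via its graph
$$\Sigma := \{(x, f(x)) : x \in \mathbb{R}^2\} \subset \mathbb{R}^4,$$
and then apply Theorem \ref{thm:generic-complete} with respect to the plane $\Pi := \mathbb{R}^2 \times \{0\}$, so that $f$ affine is equivalent to $\Sigma$ being an affine $2$-plane. In the natural chart $x \mapsto (x, f(x))$, the induced metric has matrix $g_{ij} = \delta_{ij} + \langle \partial_i f, \partial_j f \rangle$, so $\mathrm{tr}(g) = 2 + \|Jf\|^2$ while Lagrange's identity gives $\det g = 1 + \|Jf\|^2 + (\det Jf)^2$. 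Since $\det Jf \equiv 1$, both invariants equal $2 + \|Jf\|^2$. The form $Q_{\Pi, T_p\Sigma}$ of Proposition \ref{Lagrange} has matrix equal to the $2\times 2$ identity in this chart, so $\mathcal{S}_{\Pi, T_p\Sigma}$ is represented by $g^{-1}$; its eigenvalues $\cos^2\theta_i$ have sum $\mathrm{tr}(g)/\det g = 1$ and product $1/\det g = 1/(2+\|Jf\|^2)$. Since $\|Jf\|$ is constant by hypothesis, both principal angles are constant, and $\Sigma$ is a helix surface with respect to $\Pi$.

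Next I would verify that $(\Sigma, g)$ is complete. The projection $\pi : \Sigma \to \mathbb{R}^2$ onto the first factor is a smooth bijection, and since any tangent vector $(v_0, df(v_0))$ has length at least $|v_0|$, the map $\pi$ is $1$-Lipschitz into Euclidean $\mathbb{R}^2$. Any Cauchy sequence in $(\Sigma,g)$ therefore projects to a Cauchy, hence convergent, sequence in $\mathbb{R}^2$, and continuity of the graph map lifts the limit back to $\Sigma$.

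To place $\theta_1, \theta_2$ in the range required by Theorem \ref{thm:generic-complete}, I would use that the singular values $\sigma_1 \leq \sigma_2$ of $Jf$ satisfy $\sigma_1\sigma_2 = 1$, so by AM-GM $\|Jf\|^2 = \sigma_1^2 + \sigma_2^2 \geq 2$, with equality iff $Jf \in SO(2)$ pointwise. If $\|Jf\|^2 > 2$, the roots of $t^2 - t + 1/(2+\|Jf\|^2) = 0$ are distinct and lie in $(0,1)$, so $0 < \theta_1 < \theta_2 < \pi/2$; Theorem \ref{thm:generic-complete} then yields $\Sigma$ totally geodesic, hence an affine $2$-plane, hence $f$ affine. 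The boundary case $\|Jf\|^2 = 2$ lies outside the theorem's hypotheses but is handled directly: $Jf(p) \in SO(2)$ at every $p$ is the Cauchy-Riemann system for $f : \mathbb{C} \to \mathbb{C}$ together with $|f'| \equiv 1$, so $f$ is entire and $|f'|^2 = f' \overline{f'}$ is constant; differentiating in $\bar z$ forces $f' \equiv \mathrm{const}$, so $f$ is affine.

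The main obstacle I expect is isolating and handling the borderline case $\|Jf\|^2 = 2$, which escapes the strict-inequality hypothesis of Theorem \ref{thm:generic-complete}. The AM-GM observation both identifies this case and reduces it to a short complex-analytic argument, after which the two cases together give the full corollary.
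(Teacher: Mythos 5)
Your proof is correct and follows the same route as the paper: realize the graph of $f$ as a complete helix surface with respect to $\Pi=\mathbb{R}^2\times\{0\}$ and invoke Theorem \ref{thm:generic-complete} to conclude the graph is totally geodesic. You are in fact more careful than the paper's one-line argument, since you isolate and separately dispose of the borderline case $\|Jf\|^2=2$ (equal principal angles $\theta_1=\theta_2=\pi/4$), which falls outside the strict inequalities of Theorem \ref{thm:generic-complete} and which the paper's proof of the corollary does not address explicitly (it is covered elsewhere by Corollary \ref{iguales}).
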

\begin{proof}
As explained in Proposition \ref{symplecto} below the graph of such a symplectomorphism can be used to construct a surface with constant principal angles. Since it is an entire graph it is complete, and the above theorem implies that the graph is totally geodesic. Thus, $f$ must be an affine map.
\end{proof}

\subsection{Complete surfaces with $\theta_1 = 0$}

Here we prove that a complete not totally geodesic surface with constant principal angles is a product.

\begin{theorem}
\label{thm:complete-angulo-zero}
Assume $\Sigma \subset \mathbb{R}^4$ with constant principal angles such that $\theta_1=0$
to be complete and not totally geodesic. Then $T_1,T_2$ are parallel vector fields and $\Sigma$ is an extrinsic product.
\end{theorem}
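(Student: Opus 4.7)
The plan is to show that $m_2 \equiv 0$ on $\Sigma$ by deriving a Riccati-type ODE for $m_2$ along the (geodesic) orbits of $T_2$ and using completeness to rule out finite-time blow-up of its non-trivial solutions; once $m_2$ vanishes, parallelism of $T_1,T_2$ follows at once, and the extrinsic product structure is obtained by integrating $T_1$, which will turn out to be a constant vector field in $\R^4$. I will work in the generic sub-case $0 < \theta_2 < \pi/2$; the boundary case $\theta_2 = \pi/2$ reduces directly, because equations (\ref{tangente0})--(\ref{normal0}) then force $dt\equiv 0$ at once, and the product structure is that of two plane curves lying in $\Pi$ and $\Pi^\perp$.

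First, from $\nabla \lambda_2 = m_2 T_1$ and the relation $d\lambda_2 = \tan(\theta_2)\,dt$ of (\ref{zero}) one reads off
\[
dt(X) \;=\; \cot(\theta_2)\, m_2 \,\langle T_1, X\rangle,
\]
so $dt(T_1) = \cot(\theta_2)\,m_2$ while $dt(T_2) = 0$, which recovers the geodesicity of $T_2$ noted in subsection \ref{subsec:un-angulo-zero}. Substituting $dt(T_1)$ into the Codazzi equation (C3), $m_2\,dt(T_1) = dm_2(T_2)$, gives the key scalar identity
\[
T_2(m_2) \;=\; \cot(\theta_2)\, m_2^{\,2}.
\]
Since $\Sigma$ is complete, the flow $F_s^{T_2}$ of the unit geodesic field $T_2$ is defined for every $s\in\R$; along one of its orbits the function $y(s) := m_2(F_s^{T_2}(p))$ satisfies the Riccati equation $\dot y = \cot(\theta_2)\,y^2$, whose non-zero solutions blow up in finite time — contradicting the global definition of the flow. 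Hence $m_2\equiv 0$, so $dt\equiv 0$, and therefore $\nabla T_1 = \nabla T_2 = 0$; since $\Sigma$ is not totally geodesic, we must have $m_1 \not\equiv 0$.

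It remains to produce the extrinsic product structure. With $m_2 = 0$ and $\alpha(T_1,T_2)=0$ one has $\alpha(\,\cdot\,,T_1)\equiv 0$, so
\[
D_X T_1 \;=\; \nabla_X T_1 + \alpha(X,T_1) \;=\; 0
\]
for every tangent field $X$; that is, $T_1\equiv v$ is a fixed unit vector of $\R^4$. Fix $p_0\in\Sigma$ and let $\gamma$ be the maximal integral curve of $T_2$ through $p_0$, defined for every real parameter thanks to completeness. Because $[T_1,T_2] = 0$ and $D_{T_1}T_2 = 0$, the map $\Phi(s,t) = sv + \gamma(t)$ parameterizes all of $\Sigma$; since $\gamma'(t) = T_2 \perp v$, the curve $\gamma$ lies in the affine hyperplane $p_0 + v^\perp$, and $\Sigma$ is the extrinsic product of the line $\R v$ with the curve $\gamma \subset v^\perp$. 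The main technical point — and the only place where completeness is essentially used — is the Riccati blow-up argument; everything else (parallelism, constancy of $T_1$ in $\R^4$, the cylinder parameterization) is then a routine consequence of the structure equations.
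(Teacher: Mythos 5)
Your proof is correct and follows essentially the same route as the paper: both reduce the problem to the Riccati equation $T_2(m_2)=\cot(\theta_2)\,m_2^2$ along the complete geodesic flow of $T_2$ (the paper obtains it via Lemma \ref{eq.dif.} with $B=\cot\theta_2$, you via Codazzi $(C3)$ combined with $dt=\cot(\theta_2)m_2\langle T_1,\cdot\rangle$), and both conclude $m_2\equiv 0$, hence $dt\equiv 0$, from the finite-time blow-up of nonzero solutions. The only cosmetic difference is at the end, where the paper invokes Moore's Lemma while you integrate the now-constant vector $T_1$ explicitly to exhibit the cylinder structure.
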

\begin{proof}
If $\theta_2 = \frac{\pi}{2},$ from the first equation in (\ref{tangente1}) we get $dt=0,$ and thus that $T_1$ and $T_2$ are parallel vector fields. So assume $0 < \theta_2 < \frac{\pi}{2}$. As in the proof of Theorem \ref{thm:generic-complete} (using Lemma \ref{eq.dif.} and $B\neq 0$) we get that $\lambda_2$ is a constant function. The second equation in (\ref{zero}) then implies $dt=0,$ and thus that $T_1$ and $T_2$ are parallel vector fields. That $\Sigma$ is an extrinsic product follows from the well-known Moore's Lemma \cite[p. 28]{BCO} since $\alpha(T_1,T_2) \equiv 0$.
\end{proof}

\subsection{Proof of Theorem \ref{main}.}
The first claim was already proved in Lemma \ref{flat}. 
The second claim is a consequence of Theorem \ref{thm:generic-complete} and Theorem \ref{thm:complete-angulo-zero}.
The third part is a consequence of the previous theorem since a compact surface is complete.

\section{Do Carmo-Dajczer-Tojeiro compositions}
\label{sec:compositions}

In \cite{DT1},\cite{DT2} and \cite{DoDa} local isometric immersions of $\mathbb{R}^2$ into $\mathbb{R}^4$ were studied. The authors introduced the concept of \emph{compositions}: an isometric immersion $i: U \subset \mathbb{R}^2 \rightarrow \mathbb{R}^4$ is called a composition (or is said to be trivial) if there exist isometric immersions $g: U \subset \mathbb{R}^2 \rightarrow \mathbb{R}^3$ and $F: W \subset \mathbb{R}^3 \rightarrow \mathbb{R}^4$ such that
\[ i = F \circ g. \]

Since the surfaces $\Sigma \subset \mathbb{R}^4$ with constant principal angles are flat it is natural to understand when they are compositions.

\begin{prop}\label{CNT} Let $\Sigma \subset \mathbb{R}^4$ be a surface with constant principal angles $ 0 < \theta_1 < \theta_2  < \frac{\pi}{2}$.
 The following facts are equivalent:
 \begin{itemize}
\item[(i)] $\Sigma$ is a composition,
\item[(ii)] the first normal space $N_1$ has rank one,
\item[(iii)] either $T_1$ or $T_2$ is a totally geodesic vector field.
 \end{itemize}
\end{prop}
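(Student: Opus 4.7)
The plan is to reduce each of (i), (ii), (iii) to the single condition ``$m_1 \equiv 0$ or $m_2 \equiv 0$''. The key algebraic input comes from equation (\ref{dependencia1}): in the generic case it reads $dt = A\, d\lambda_1 + B\, d\lambda_2$ for constants $A, B$ depending only on $\theta_1, \theta_2$, both nonzero since $0 < \theta_1 < \theta_2 < \pi/2$. Since $\nabla\lambda_1 = m_1 T_2$ and $\nabla\lambda_2 = m_2 T_1$, evaluation on the adapted frame $(T_1, T_2)$ yields
$dt(T_1) = B m_2$ and $dt(T_2) = A m_1$.

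The equivalence (ii)$\Leftrightarrow$(iii) is then immediate from the structure equations of Section \ref{section structure equations}. The image of $\alpha$ is spanned by $\alpha(T_1, T_1) = m_2\xi_2$ and $\alpha(T_2, T_2) = m_1\xi_1$, and $\xi_1\perp\xi_2$, so $N_1$ has rank at most one iff $m_1\equiv 0$ or $m_2\equiv 0$ (passing from ``vanishes on an open set'' to ``identically zero'' by real-analyticity of the helix structure). On the other hand $\nabla_{T_1}T_1 = dt(T_1)\, T_2$ and $\nabla_{T_2}T_2 = -dt(T_2)\, T_1$, so $T_i$ is a totally geodesic vector field iff $dt(T_i) = 0$, equivalently iff $m_{3-i}\equiv 0$ by the preceding identity.

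For (i)$\Leftrightarrow$(ii) I plan to use the criterion, implicit in the work of Dajczer--Tojeiro and Dajczer--Do Carmo \cite{DT1, DT2, DoDa}, that $\Sigma\subset\R^4$ is locally a composition iff there exists a unit normal field $\eta$ on $\Sigma$ whose total differential $d\eta(X) = -A_\eta X + \nabla^\perp_X\eta$ has rank at most one at every point. For (iii)$\Rightarrow$(i), assume $m_1\equiv 0$ and take $\eta = \xi_1$: then $A_\eta = 0$, and Codazzi equation (C2) forces $dn(T_2) = 0$, so $\nabla^\perp\xi_1 = dn\,\xi_2$ has rank at most one; the envelope of the $1$-parameter family of hyperplanes $\{q \in \R^4 : \langle q - p, \xi_1(p)\rangle = 0\}_{p\in\Sigma}$ is then a developable, hence intrinsically flat, hypersurface of $\R^4$ containing $\Sigma$, and this exhibits $\Sigma$ as a composition $i = F\circ g$ with $F$ an isometric parametrization of the envelope. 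For (i)$\Rightarrow$(ii), writing $\eta = a\xi_1 + b\xi_2$ with $a^2+b^2=1$, we have $A_\eta = \mathrm{diag}(b m_2, a m_1)$ in the $(T_1, T_2)$ frame, while $\nabla^\perp\eta$ takes values in $\mathrm{span}(\eta^\perp)$; a short case analysis on the kernels of these two maps, using Codazzi equations (C2) and (C4) which give $dn(T_1) = A m_2$, $dn(T_2) = B m_1$ and hence $dn\neq 0$ whenever both $m_i$ are nonzero, forces $m_1\equiv 0$ or $m_2\equiv 0$.

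The main obstacle is matching the authors' local composition property with the analytic condition $\mathrm{rank}\,d\eta\leq 1$. Verifying that the envelope construction above produces an isometric immersion $F: W\subset\mathbb{R}^3\to\R^4$ (rather than merely a hypersurface of $\R^4$) amounts to checking that its second fundamental form has rank at most one, hence intrinsic flatness; and in the other direction the borderline case $\mathrm{rank}\,A_\eta = \mathrm{rank}\,\nabla^\perp\eta = 1$ with a shared kernel has to be ruled out using the Codazzi identities recorded above. A cleaner route, perhaps closer to the Dajczer--Tojeiro framework, would be to work entirely with parallel rank-$1$ subbundles of the normal bundle and to recognize when one can be chosen so that the adapted shape operator vanishes; in our setting, the parallel frame $\eta_c = \cos(n+c)\xi_1 - \sin(n+c)\xi_2$ is the natural candidate, and the rank analysis of $A_{\eta_c}$ specializes to the same alternative $m_1\equiv 0$ or $m_2\equiv 0$.
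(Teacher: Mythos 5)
Your reduction of (ii)$\Leftrightarrow$(iii) to the dichotomy $m_1\equiv 0$ or $m_2\equiv 0$ via the identities $dt(T_1)=Bm_2$, $dt(T_2)=Am_1$ (with $A,B\neq 0$) is exactly the route the paper takes, so that part is fine. The genuine issue is (i)$\Leftrightarrow$(ii). You replace the citations the paper actually uses --- \cite[Prop.~2.2, p.~209]{DoDa} for $(i)\Rightarrow(ii)$ and \cite{DT1} for $(ii)\Rightarrow(i)$ --- by a criterion you describe as ``implicit'' in those papers, namely that $\Sigma$ is locally a composition iff some unit normal field $\eta$ satisfies $\mathrm{rank}\,d\eta\leq 1$. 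You never prove this criterion, and you yourself flag it as ``the main obstacle.'' That is precisely the gap: everything downstream of that criterion (the envelope argument, the case analysis on $A_\eta=\mathrm{diag}(bm_2,am_1)$ and $\nabla^\perp\eta$) is only as good as the criterion, which is nontrivial --- it requires showing both that $\mathrm{rank}\,d\eta\leq 1$ implies the envelope of the hyperplanes $\{\langle q-p,\eta(p)\rangle=0\}$ is an actual \emph{flat} hypersurface through which $\Sigma$ factors isometrically, and conversely that every composition produces such a field. The paper avoids this entirely by quoting the precise statement it needs from the Dajczer--Do Carmo paper (which extracts the condition $dn(T_1)\equiv 0$ or $dn(T_2)\equiv 0$ when $\mathrm{rank}\,N_1=2$) and then reaching the contradiction through the Codazzi equations and (\ref{dependencia1}), and by quoting \cite{DT1} wholesale for the converse. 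Until you either prove your rank-one criterion in the form you need or replace it by the actual statements in \cite{DoDa} and \cite{DT1}, the proof of (i)$\Leftrightarrow$(ii) is incomplete.

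A secondary point: when you pass from ``at every point either $m_1=0$ or $m_2=0$'' to ``$m_1\equiv 0$ or $m_2\equiv 0$'' you invoke ``real-analyticity of the helix structure,'' but the surfaces here are only assumed smooth, so that justification is not available. This is a standard connectedness/open-dense argument, not an analyticity one; it deserves a correct sentence rather than an appeal to analyticity.
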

\begin{proof}
We first prove $(i)\Rightarrow (ii)$ by contradiction. Assume that $\Sigma \subset \mathbb{R}^4$ is a composition such that $rank(N_1)=2.$ It follows from \cite[p. 209, Proposition 2.2]{DoDa} that either $dn(T_1) \equiv 0$ or $dn(T_2) \equiv 0$. Then Codazzi equation $(C2)$ or $(C4)$ implies that either $T_1$ or $T_2$ is totally geodesic. Assume first that $T_1$ is totally geodesic. Then equation (\ref{dependencia1}) implies $d \lambda_2 (T_1) \equiv 0,$ which in turn implies $m_2 \equiv 0$ and contradicts the hypothesis $rank(N_1) = 2$. A similar argument shows that if $T_2$ is geodesic then $m_1 \equiv 0$ and so $rank(N_1)=1,$ a contradiction. Thus $(i)$ implies $(ii).$ $(ii)\Rightarrow (i)$ is proved in \cite{DT1}. $(iii)\Rightarrow(ii)$ follows from equation (\ref{dependencia1}) and $(ii)\Rightarrow(iii)$ from Codazzi equations.
\end{proof}

\begin{rem}
\em
A surface $\Sigma \subset \mathbb{R}^4$ with constant principal angles $\{ \theta_1,\theta_2 \} = \{0, \frac{\pi}{2}\} $ is a composition since it is a product. In general, a product has first normal space $N_1$ of rank $2$.
\end{rem}

\begin{prop}Let $\Sigma \subset \mathbb{R}^4$ be a surface with constant principal angles such that $\theta_1 = 0$.
Then $\Sigma$ is a composition.
\end{prop}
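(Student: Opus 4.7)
The plan is to exhibit $\Sigma$ locally as lying in a flat $3$-dimensional submanifold of $\R^4$ and then use this submanifold as the intermediate $\R^3$ of the composition. The driving geometric fact is that the hypothesis $\theta_1=0$ forces $e_1=T_1$, so the unit tangent vector field $T_1$ actually takes all its values in the fixed plane $\Pi$ at every point of $\Sigma$.

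First I would pick a local parametrization $\Phi(s,t)$ of $\Sigma$ with $\partial_s\Phi=T_1\circ\Phi$. Since $T_1\in\Pi$ pointwise, integrating $\partial_s\Phi\in\Pi$ in $s$ gives $\Phi(s,t)-\Phi(0,t)\in\Pi$, so every $s$-curve lies in the affine plane $\Phi(0,t)+\Pi$. Letting $\pi_\perp:\R^4\to\Pi^\perp$ denote the orthogonal projection, $\pi_\perp\circ\Phi$ then depends only on $t$, and $C:=\pi_\perp(\Sigma)$ is a curve in $\Pi^\perp$. Writing $T_2=\cos\theta_2\,e_2+\sin\theta_2\,f_2$ with $f_2$ a unit vector in $\Pi^\perp$, a short computation yields that $C$ has speed $\sin\theta_2$, so $C$ is a regular curve as long as $\theta_2>0$.

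Next I would let $T_C:=C+\Pi\subset\R^4$ be the $3$-dimensional submanifold obtained by translating $\Pi$ along $C$; by the preceding step $\Sigma\subset T_C$. I would parametrize $T_C$ by
\[
F(s,x,y)=c(s)+xu_1+yu_2,
\]
where $c$ is a unit-speed parametrization of $C$ and $(u_1,u_2)$ is an orthonormal basis of $\Pi$. Since $c'(s)\in\Pi^\perp$ is orthogonal to both $u_1$ and $u_2$, the induced metric is the flat product metric $ds^2+dx^2+dy^2$, so $F:\R^3\to\R^4$ is an isometric immersion. Setting $g:=F^{-1}\circ i$ (locally around a regular point of $c$, where $F$ is an embedding) then gives the desired factorization $i=F\circ g$.

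The remaining case $\theta_2=0$ combined with $\theta_1=0$ forces $T_p\Sigma=\Pi$ pointwise, so $\Sigma$ is contained in a single affine $2$-plane and trivially lies inside any $3$-plane containing it. The main technical point will be restricting to a neighborhood where $C$ has no self-intersection, so that $F$ is an embedding; this is harmless since being a composition is a local condition.
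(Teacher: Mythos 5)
Your proof is correct, but it takes a genuinely different route from the paper's. The paper's argument is a two-line case split on the rank of the first normal space: if $\operatorname{rank}(N_1)=1$ it quotes the Dajczer--Tojeiro result, and if $\operatorname{rank}(N_1)=2$ it reads off $dn(T_1)=0$ from the structure equations (\ref{zero}) and invokes Proposition 2.2 of Dajczer--do Carmo. You instead construct the composition explicitly: the observation that $\theta_1=0$ forces $T_1(p)\in\Pi$ for every $p$, hence that $\pi_\perp(\Sigma)$ is a regular curve $C\subset\Pi^\perp$ and $\Sigma$ is contained in the flat cylinder $C+\Pi$, is exactly right, and the parametrization $F(s,x,y)=c(s)+xu_1+yu_2$ does pull back the Euclidean metric to $ds^2+dx^2+dy^2$ because $c'(s)\in\Pi^\perp$ is a unit vector orthogonal to $\Pi$. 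Your approach buys self-containedness (no appeal to the composition criteria of \cite{DT1} and \cite{DoDa}) and an explicit description of the intermediate three-manifold, which is in the same spirit as the holonomy-tube picture of Section \ref{section structure}; the paper's approach is shorter and slots directly into the rank-of-$N_1$ dichotomy used in Proposition \ref{CNT}. Two small points to tidy up: the ``speed $\sin\theta_2$'' of $C$ depends on how the transversal parameter $t$ is normalized (what matters, and what your computation does give, is that $\pi_\perp$ has rank one on $T\Sigma$, so $C$ is regular whenever $\theta_2>0$); and one should say explicitly that $\Sigma$ is flat (Lemma \ref{flat}) so that it can be identified with $U\subset\R^2$ and $g:=F^{-1}\circ i$ is an isometric immersion of $U\subset\R^2$ into $\R^3$, as the definition of composition requires.
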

\begin{proof} If $rank(N_1) = 1$ then the claim follows from \cite{DT1}. Assume $rank(N_1) = 2$.
Equations (\ref{zero}) imply $dn(T_1) = 0$ and so by \cite[p. 209, Proposition 2.2]{DoDa} we get that $\Sigma$ is a composition.
\end{proof}

\section{Constant angles surfaces in spheres}\label{section surfaces spheres}

A product of two circles is a surface of constant principal angles contained in a sphere. The aim of this section is to classify the surfaces with constant principal angles contained in some sphere.
Along this section we assume $\Sigma \subset \mathbb{R}^4$ to be a surface of constant principal angles with respect to the plane $\Pi \subset \mathbb{R}^4$. The following lemma is well-known:

\begin{lema} The surface $\Sigma$ is contained in some sphere if and only if there exists a normal parallel vector field $\xi$ whose shape operator is a multiple of the identity, i.e. is of the form $A_{\xi} = \delta \mathrm{Id}$ with $\delta\neq 0.$
\end{lema}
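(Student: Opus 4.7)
The plan is to prove both implications by a direct computation with the position vector field, using the fact that for a parallel normal vector field $\xi$ with $A_\xi=\delta\,\mathrm{Id}$, the vector field $x+\tfrac{1}{\delta}\xi$ turns out to be constant along $\Sigma$.

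First I would prove the easy direction $(\Leftarrow)$. Assume $\Sigma$ is contained in a sphere of center $c$ and radius $r>0$, and define $\xi(p):=(p-c)/r$ along $\Sigma$. At each $p\in\Sigma$, the vector $p-c$ is normal to $T_p\Sigma$ (this is the standard fact that the position vector from the center is normal to any submanifold of a sphere). Hence $\xi\in\Gamma(T\Sigma^\perp)$ and $|\xi|\equiv 1$. For any tangent vector $X$,
\[
D_X\xi=\frac{1}{r}\,X,
\]
so comparing with the Weingarten decomposition $D_X\xi=-A_\xi(X)+\nabla^\perp_X\xi$ and splitting into tangent and normal parts gives $A_\xi=-\tfrac{1}{r}\,\mathrm{Id}$ and $\nabla^\perp\xi\equiv 0$. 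Thus $\xi$ is parallel normal with shape operator a nonzero multiple of the identity.

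For the converse $(\Rightarrow)$, suppose $\xi$ is parallel in the normal bundle and $A_\xi=\delta\,\mathrm{Id}$ with $\delta\neq 0$. Let $x:\Sigma\to\R^4$ denote the position vector field (assuming we work locally or in $\R^4$), and consider
\[
y:=x+\tfrac{1}{\delta}\,\xi.
\]
For any tangent vector field $X$,
\[
D_Xy=X+\tfrac{1}{\delta}\,D_X\xi=X+\tfrac{1}{\delta}\bigl(-A_\xi(X)+\nabla^\perp_X\xi\bigr)=X-X+0=0,
\]
so $y$ is a constant vector $c\in\R^4$. Therefore $x-c=-\tfrac{1}{\delta}\xi$, and it remains to check that $|\xi|$ is constant: since $\nabla^\perp\xi\equiv 0$ and $\langle A_\xi(X),\xi\rangle=0$ (tangent paired with normal), we get $X\cdot|\xi|^2=2\langle D_X\xi,\xi\rangle=0$. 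Thus $|x-c|=|\xi|/|\delta|$ is a positive constant, and $\Sigma$ lies in the sphere of center $c$ and that radius.

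There is no real obstacle here; the only point requiring a moment of care is checking that $|\xi|$ is constant, which follows immediately from $\xi$ being normal and $\nabla^\perp\xi\equiv 0$. The argument is local and purely computational, which is consistent with the lemma being labelled as well-known.
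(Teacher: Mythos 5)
Your argument is correct and is the standard proof of this fact; the paper itself offers no proof (it simply labels the lemma as well-known), and the position-vector computation $y=x+\tfrac{1}{\delta}\xi$ being constant is exactly the argument the authors presumably have in mind. The only blemishes are cosmetic: you have swapped the labels $(\Leftarrow)$ and $(\Rightarrow)$ relative to the statement, and you implicitly use connectedness of $\Sigma$ to conclude that $y$ is constant and that $\xi\not\equiv 0$ (which follows from $A_\xi=\delta\,\mathrm{Id}\neq 0$).
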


Now assume that $\Sigma$ is contained in some sphere. Then there exists a function $\theta$ such that the normal vector field $\xi$ defined as
\[ \xi := \cos(\theta) \xi_1 + \sin(\theta) \xi_2 \]
is normal, parallel and such that $A_{\xi} = \delta \mathrm{Id},$ $\delta\neq 0.$

\begin{lema} The surface $\Sigma \subset \mathbb{R}^4$ with constant principal angles is contained in some sphere if and only if there exist a function $\theta$ and a number  $\delta\neq 0$ such that
\begin{equation}\label{system sphere}
m_1 \cos(\theta)= \delta,\hspace{.5cm}  m_2 \sin(\theta)= \delta\hspace{.5cm}\mbox{and}\hspace{.5cm}d\theta=-dn.
\end{equation}
\end{lema}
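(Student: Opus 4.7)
The plan is to prove this by a direct computation that unpacks the two conditions already isolated in the preceding lemma (parallel normal field and shape operator proportional to the identity), for the ansatz
\[ \xi = \cos(\theta)\xi_1 + \sin(\theta)\xi_2 \]
introduced just above the statement. Since the previous lemma already gives the equivalence ``$\Sigma$ lies in a sphere'' $\Leftrightarrow$ ``there exists such a $\xi$ with $A_\xi = \delta\,\mathrm{Id}$, $\delta\neq 0$'', and since every unit normal field can, up to rescaling $\delta$, be written in this form relative to the orthonormal frame $(\xi_1,\xi_2)$, it suffices to translate those two normal-bundle conditions into the claimed PDE system on $\theta$.

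First I would compute the shape operator of $\xi$ in the frame $(T_1,T_2)$. Using the expressions for $A_{\xi_1}$ and $A_{\xi_2}$ recorded in Section \ref{section structure equations},
\[ A_\xi = \cos(\theta)A_{\xi_1} + \sin(\theta)A_{\xi_2} = \begin{pmatrix} m_2\sin(\theta) & 0 \\ 0 & m_1\cos(\theta) \end{pmatrix}. \]
The requirement $A_\xi = \delta\,\mathrm{Id}$ is therefore exactly $m_2\sin(\theta) = \delta = m_1\cos(\theta)$, giving the first two equations of (\ref{system sphere}). Note that $\delta\neq 0$ forces both $m_i\neq 0$ and $\sin\theta,\cos\theta\neq 0$, so $\theta$ is smooth and well defined by this relation.

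Next I would compute $\nabla^\perp\xi$ using the differentiation rules $\nabla^\perp\xi_1 = dn\,\xi_2$, $\nabla^\perp\xi_2 = -dn\,\xi_1$ from (\ref{def t n}). For any tangent $X$,
\begin{align*}
\nabla^\perp_X\xi
&= -\sin(\theta)\,d\theta(X)\,\xi_1 + \cos(\theta)\,dn(X)\,\xi_2 \\
&\quad {}+ \cos(\theta)\,d\theta(X)\,\xi_2 - \sin(\theta)\,dn(X)\,\xi_1 \\
&= \bigl(d\theta(X) + dn(X)\bigr)\bigl(-\sin(\theta)\,\xi_1 + \cos(\theta)\,\xi_2\bigr).
\end{align*}
Since $-\sin(\theta)\xi_1 + \cos(\theta)\xi_2$ is a unit normal vector orthogonal to $\xi$, the vanishing of $\nabla^\perp\xi$ is equivalent to $d\theta + dn = 0$, i.e.\ the third equation of (\ref{system sphere}).

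Conversely, if $\theta$ and $\delta\neq 0$ satisfy the three equations of (\ref{system sphere}), the same two computations run backwards show that $\xi := \cos(\theta)\xi_1 + \sin(\theta)\xi_2$ is a parallel unit normal field with $A_\xi = \delta\,\mathrm{Id}$, so the previous lemma places $\Sigma$ in a sphere of radius $1/|\delta|$. The step that deserves a brief comment, and is really the only subtle point, is ensuring that an arbitrary parallel unit normal $\xi$ with $A_\xi = \delta\,\mathrm{Id}$ is of the assumed form; this is automatic because $(\xi_1,\xi_2)$ is a (local) orthonormal frame of the normal bundle, so any unit normal has coefficients $(\cos\theta,\sin\theta)$ for some smooth function $\theta$. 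No real obstacle arises; the content is essentially a bookkeeping exercise in the adapted frame built in Section \ref{section structure equations}.
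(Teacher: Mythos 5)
Your proposal is correct and follows essentially the same route as the paper's own proof: compute $A_\xi$ for $\xi=\cos(\theta)\xi_1+\sin(\theta)\xi_2$ to obtain the two algebraic conditions, and compute $\nabla^\perp\xi$ via (\ref{def t n}) to obtain $d\theta+dn=0$. The only (harmless) extra is your explicit remark that any unit parallel normal admits such a $(\cos\theta,\sin\theta)$ representation, which the paper asserts without comment just before the lemma.
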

\begin{proof} If $\xi= \cos(\theta) \xi_1 + \sin(\theta) \xi_2,$ we have 
$$A_{\xi}=\cos(\theta)A_{\xi_1} + \sin(\theta) A_{\xi_2}=\left(\begin{array}{cc}\sin(\theta)m_2 & 0 \\0 & \cos(\theta)m_1 \\ \end{array} \right).$$
Thus $A_{\xi}=\delta Id$ if and only if the first two equations in (\ref{system sphere}) hold. Moreover, recalling (\ref{def t n}), we get
\[ \begin{array}{lcl}
  \nabla^{\perp}_X \xi & = & \nabla^{\perp}_X \left(\cos(\theta) \xi_1 + \sin(\theta) \xi_2\right)\\
                       
                       & = & - \sin(\theta)\left(d\theta(X) + dn(X)\right) \xi_1 + \cos(\theta)\left(dn(X) + d\theta(X)\right) \xi_2,
\end{array} \]
and the vector fielf $\xi$ is parallel if and only if the last equation in (\ref{system sphere}) holds.
\end{proof}

We now prove by contradiction that $\theta_1=0$ or $\theta_2=\pi/2.$ Equations (\ref{dependencia1})-(\ref{dependencia3})  together with (\ref{system sphere}) and Codazzi equations (C1)-(C2) give the following system for $d\theta(T_1),d\theta(T_2), dt(T_1)$ and $dt(T_2)$:
\begin{equation*}\label{esferico1}\left\{
  \begin{array}{lclr}
 (\frac{\cos(\theta_1)}{\cos(\theta_2)}-\frac{\cos(\theta_2)}{\cos(\theta_1)})dt(T_1)&=&  \frac{\sin(\theta_2)\delta}{\cos(\theta_1)\sin(\theta)} &(1-T_1)\\
  (\frac{\cos(\theta_1)}{\cos(\theta_2)}-\frac{\cos(\theta_2)}{\cos(\theta_1)}) dt(T_2)&=&   \frac{\sin(\theta_1)\delta}{\cos(\theta_2)\cos(\theta)}  &(1-T_2)\\
 \frac{\sin(\theta_1)}{\sin(\theta_2)} d\theta(T_1) + \frac{\cos(\theta_1)}{\cos(\theta_2)}dt(T_1)&=&   \frac{\cos(\theta_1)\delta}{\sin(\theta_2)\sin(\theta)} &(2-T_1)\\
  \frac{\sin(\theta_1)}{\sin(\theta_2)} d\theta(T_2) + \frac{\cos(\theta_1)}{\cos(\theta_2)}dt(T_2)&=&   \frac{\sin(\theta_1)\delta}{\cos(\theta_2)\cos(\theta)} &(2-T_2)\\
   \frac{\sin(\theta_2)}{\sin(\theta_1)} d\theta(T_1) + \frac{\cos(\theta_1)}{\cos(\theta_2)} dt(T_1) &=& 0 &(3-T_1)\\
   \frac{\sin(\theta_2)}{\sin(\theta_1)} d\theta(T_2) + \frac{\cos(\theta_1)}{\cos(\theta_2)} dt(T_2) &=& (\frac{\sin(\theta_1)}{\cos(\theta_2)}- \frac{\cos(\theta_2)}{\sin(\theta_1)})\frac{\delta}{\cos(\theta)} &(3-T_2)\\
   dt(T_2) &=& -\tan(\theta)d\theta(T_1) &(C_1)\\
   dt(T_1) &=& -\cot(\theta)d\theta(T_2) &(C_2)\\
  \end{array}
\right. \, . \end{equation*}

Observe that the values of $d\theta(T_1),d\theta(T_2), dt(T_1)$ and $dt(T_2)$ are given by the first two and the last two equations. Then using equation $(3-T_2)$ we easily get
\[ -2\delta \cos^2(\theta_1)\csc(\theta_1 - \theta_2)\csc(\theta_1 + \theta_2)\sec(\theta_2)\sec(\theta)\sin(\theta_1) = 0. \]
So we have a contradiction unless either $\theta_1 = 0 $ or $\theta_2 = \frac{\pi}{2}$.

Assume that $\theta_1 = 0$ (the case $\theta_2= \frac{\pi}{2}$ is reduced to that case, switching the role of $\Pi$ and $\Pi^{\perp}$). Since $\Sigma$ is flat we can take local coordinates $(x,y)$ where the metric is given by $dx^2 + dy^2$. Using the structure equations (\ref{zero}), we see that the flow lines of $T_2$ are level curves of the functions $\lambda_2$ and $t,$ whereas the flow lines of $T_1$ are the level curves of $\lambda_1,n$ and $\theta.$ Thus the vector field $T_2$ is geodesic and its flow in $(x,y)$ consists of straight lines, and we get a nice intrinsic description of the surface using Fermi coordinates. Namely, let $\gamma(s)$ be the arc length parametrization of a level curve of the function $\theta$ and let $(s,r)$ be the Fermi coordinates around a point of $\gamma$. The flat metric $dx^2 + dy^2$ is expressed in coordinates $(s,r)$ as
\[dr^2 + (1 - r\kappa(s))^2 ds^2 , \]
where $\kappa(s)$ is the curvature of $\gamma(s)$. As explained above the function $\theta$ only depends on $r,$ and the function $t$ only on $s$.  We have
 $$\nabla t  = \frac{t'(s)}{(1 - r\kappa(s))^2 } \frac{\partial}{\partial s}\hspace{1cm}\mbox{and}\hspace{1cm}
 \nabla \theta = \theta'(r) \frac{\partial}{\partial r}.$$
 Moreover, using the second equation in (\ref{zero}), the very definition of $m_2$  and the second equation in (\ref{system sphere}), we get
\[ \| \nabla t \| = \frac{\| \nabla \lambda_2 \|}{\tan(\theta_2)} = \frac{|m_2 |}{\tan(\theta_2)} = \frac{\delta}{\tan(\theta_2)|\sin(\theta(r))|}.\]
Thus 
\[ \frac{|t'(s)|}{|1 - r\kappa(s)|} = \frac{\delta}{\tan(\theta_2)|\sin(\theta(r))|} \, . \]
Taking $r=0$ we see that $t'(s)$ is a constant, $t'_0,$ which is such that
\[ \frac{t'_0}{|1 - r\kappa(s)|} = \frac{\delta}{\tan(\theta_2)|\sin(\theta(r))|};\]
thus $\kappa(s)$ is a constant, $\kappa_0$. This shows that the flow lines of $T_1$ are arcs of circles in coordinates $x,y$. Actually, the flow lines of $T_1$ are arcs of circles in the space $\mathbb{R}^4$. To see this, observe that the flow lines of $T_1$ are contained in a
plane parallel to $\Pi$ because $T_1$ is always tangent to $\Pi$. The absolute value of the curvature of a given flow line of $T_1$ is given by
\[ \| D_{T_1}T_1\|^2 = \| \nabla_{T_1}T_1 \|^2 + \| \alpha(T_1,T_1)\|^2= \kappa_0^2+ \frac{\delta^2}{\sin^2(\theta(r))}, \]
which does not depend on the parameter $s$. So the flow lines of $T_1$ are arcs of circles contained in parallel planes. We have the following result.

\begin{theorem}\label{spherical} Let $\Sigma \subset \mathbb{R}^4$ be a surface with constant principal  angles with respect to the plane $\Pi \subset \mathbb{R}^4$. If $\Sigma$ is contained in a sphere $S^3 \subset \mathbb{R}^4$ then either $\theta_1 = 0$ or $\theta_2 = \frac{\pi}{2}$ and $\Sigma$ is a surface of revolution around a fixed plane $\Pi_0 \subset \mathbb{R}^3 \subset \mathbb{R}^4$ obtained by revolving a spherical helix curve of $\mathbb{R}^3$.
\end{theorem}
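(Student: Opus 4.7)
The plan is to exploit the characterization that $\Sigma \subset S^3$ is equivalent to the existence of a parallel unit normal field $\xi$ with shape operator $A_\xi = \delta\,\mathrm{Id}$ for some nonzero $\delta$. Writing $\xi = \cos(\theta)\xi_1 + \sin(\theta)\xi_2$, I would first verify (as the Lemma preceding the theorem does) that this umbilicity plus parallelism translates, through the shape operator formulas $A_{\xi_i}$ in the adapted frame and the connection relations \eqref{def t n}, into the three conditions $m_1\cos\theta = m_2\sin\theta = \delta$ and $d\theta = -dn$. This converts the spherical hypothesis into purely algebraic/differential constraints on the unknowns $t,n,\lambda_1,\lambda_2,\theta$ attached to the adapted frame.

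Next, I would assume we are in the generic case $0 < \theta_1 < \theta_2 < \pi/2$ and seek a contradiction. The idea is to feed $m_1\cos\theta = \delta$ and $m_2\sin\theta = \delta$ into the linear system \eqref{dependencia1}--\eqref{dependencia3} (which controls $dt, dn, d\lambda_1, d\lambda_2$) and simultaneously use the Codazzi equations (C1), (C2) (which relate $dt(T_i)$ to $d\theta(T_j)$ via $d\theta = -dn$). This produces an overdetermined linear system in the four quantities $d\theta(T_1), d\theta(T_2), dt(T_1), dt(T_2)$; the first two equations of \eqref{dependencia1}--\eqref{dependencia3} and the two Codazzi equations already pin these four quantities down, and substituting into the remaining equation $(3$-$T_2)$ yields, after trigonometric simplification, a relation of the form
\[
-2\delta\cos^2(\theta_1)\csc(\theta_1-\theta_2)\csc(\theta_1+\theta_2)\sec(\theta_2)\sec(\theta)\sin(\theta_1) = 0,
\]
which is impossible in the generic range. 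Hence $\theta_1 = 0$ or $\theta_2 = \pi/2$; by the symmetry $\Pi \leftrightarrow \Pi^\perp$ (Lemma \ref{principal-angles-orthogonal}) it is enough to treat $\theta_1 = 0$.

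In the case $\theta_1 = 0$, the structure equations \eqref{zero} tell me that the level sets of $\lambda_2$ and $t$ coincide with the flow lines of $T_2$, while the level sets of $\lambda_1$, $n$ and $\theta$ coincide with the flow lines of $T_1$; in particular $T_2$ is geodesic. I would then introduce Fermi coordinates $(s,r)$ based on a level curve $\gamma(s)$ of $\theta$, giving the flat metric $dr^2 + (1-r\kappa(s))^2\,ds^2$, with $\theta = \theta(r)$ and $t = t(s)$. Using $\|\nabla t\| = \|\nabla\lambda_2\|/\tan\theta_2 = |m_2|/\tan\theta_2 = \delta/(\tan\theta_2\,|\sin\theta(r)|)$, the identity
\[
\frac{|t'(s)|}{|1-r\kappa(s)|} = \frac{\delta}{\tan(\theta_2)\,|\sin\theta(r)|}
\]
evaluated at $r = 0$ forces $t'(s)$ to be a constant $t_0'$, and then the full identity forces $\kappa(s)$ to be a constant $\kappa_0$. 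Thus the intrinsic shape of the $T_1$-flow line is a circle of constant curvature.

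The last step is the geometric upgrade: the $T_1$-flow lines are not merely intrinsic circles but circles in $\mathbb{R}^4$. For this I would use that $T_1 \in \Pi$ everywhere, so each flow line is contained in a plane parallel to $\Pi$, and its Euclidean curvature
\[
\|D_{T_1}T_1\|^2 = \|\nabla_{T_1}T_1\|^2 + \|\alpha(T_1,T_1)\|^2 = \kappa_0^2 + \frac{\delta^2}{\sin^2\theta(r)}
\]
depends only on $r$. Hence all flow lines of $T_1$ corresponding to the same $r$ are congruent planar circles in parallel copies of $\Pi$, which exhibits $\Sigma$ as the orbit of the curve $\gamma$ under the $SO(2)$-action fixing a plane $\Pi_0$; since $\Sigma \subset S^3$ and the profile curve has constant principal angle with a fixed direction (by Proposition \ref{helices-nonfull}), this profile curve is a spherical helix in $\mathbb{R}^3$. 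The main obstacle in executing this plan is the bookkeeping in the combined system that yields the contradiction in the generic case, and making sure the Fermi coordinate computation cleanly produces the constancy of both $t'(s)$ and $\kappa(s)$ from the single scalar equation.
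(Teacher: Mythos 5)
Your proposal is essentially the paper's own proof: same spherical characterization via a parallel umbilical normal, same translation into $m_1\cos\theta = m_2\sin\theta = \delta$ and $d\theta = -dn$, same overdetermined system from (\ref{dependencia1})--(\ref{dependencia3}) with Codazzi (C1)--(C2) producing the contradiction in the generic range, same Fermi-coordinate argument forcing $t'(s)$ and $\kappa(s)$ constant, and same upgrade to Euclidean planar circles via $\|D_{T_1}T_1\|^2 = \kappa_0^2 + \delta^2/\sin^2\theta(r)$. The one loose end is your appeal to Proposition \ref{helices-nonfull} in the final sentence: that result is about non-full helix \emph{surfaces}, not about the profile curve; the paper instead closes the argument by noting $D_{T_2}T_1 = \nabla_{T_2}T_1 + \alpha(T_2,T_1) = 0$ (since $T_2$ is geodesic and $\alpha(T_1,T_2)=0$), so $T_1$ is constant in $\mathbb{R}^4$ along the $T_2$-flow line, that flow line therefore lies in the fixed normal space $\nu_{p_0}(\mathcal{C}) \cong \mathbb{R}^3$, and there it is a classical helix with respect to the fixed normal direction $N_{p_0}$, hence a spherical helix since $\Sigma \subset S^3$.
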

\begin{proof}
Let $\mathcal{C}$ be a flow line of $T_1$. As explained above $\mathcal{C}$ is an arc of circle contained in a plane parallel to $\Pi$. So we can assume $\mathcal{C} \subset \Pi$. Let $p \in \mathcal{C}$ and let $\gamma(r) \subset \Sigma \subset \mathbb{R}^4$ be the flow line of $T_2$ starting from $p.$ Notice that the vectors $\xi(r) := \gamma(r) - p$ are always perpendicular to $\mathcal{C}$ at $p$. Indeed, \[ D_{T_2} T_1 = \nabla_{T_2} T_1 + \alpha(T_2,T_1) = 0 + 0 \]
since $dt(T_2)=0$ ($T_2$ is geodesic). This shows that $T_1$ is constant in $\mathbb{R}^4$ along $\gamma(r)$ and thus that the vectors $\xi(r) := \gamma(r) - p$ are always perpendicular to $\mathcal{C}$ at $p$. In other words, the curve $\gamma(r)$ is contained in the normal space $\nu_p(\mathcal{C})$ of the curve $\mathcal{C} \subset \mathbb{R}^4$. The above discussion is independent of the point $p \in \mathcal{C}$. Denote by $\gamma_p(r)$ the corresponding curve in $\nu_p(\mathcal{C})$. Now fix $r = r_0,$ and let $\xi_{r_0}(p) := p - \gamma_p(r_0)$ be a field of normal vectors along $\mathcal{C}$ (assuming that $p$ varies in $\mathcal{C}$). Observe that the derivative of $\gamma_p(r_0)$ in the direction of $T_1$ is a multiple of $T_1$ because $T_1$ is parallel along $\gamma_p$. This shows that the normal vector field $\xi_{r_0}(p)$ is parallel with respect to the normal connection of $\mathcal{C}$. Since $\mathcal{C}$ is a circle contained in $\Pi$ the parallel transport with respect to the normal connection is given by the rotations around $\Pi$ which fix $\Pi^{\perp}$.

Now fix $p_0 \in \mathcal{C}$. Identifying the normal space $\nu_{p_0}(\mathcal{C})$ to $\mathbb{R}^3$ we see that the flow line $\gamma(r)$ of $T_2$ starting at $p_0$ is a classical helix curve of $\mathbb{R}^3$ with respect to $N_{p_0} \in \nu_{p_0}(\mathcal{C}) \cong \mathbb{R}^3$, where
$N_{p_0}$ is the unit normal at $p_0$ of $\mathcal{C} \subset \Pi$. Indeed, $\nu_{p_0}(\mathcal{C}) = \mathbb{R} N_{p_0} \oplus \Pi^{\perp}$ and, as we explained above, $\gamma(r) \subset \nu_{p_0}(\mathcal{C})$. Since $\gamma(r)$ is a helix curve and since it is contained in $\Sigma$ it follows that $\gamma(r)$ is a \emph{spherical helix}; see \cite[p. 248, Lemma 8.22]{GAS} for a complete discussion about such curves.
\end{proof}

\begin{rem}\label{holonomy}
\em
It is interesting to notice that the above proof shows that $\Sigma$ is the union of the so called \emph{holonomy tubes} (see \cite[p. 124]{BCO}) over $\mathcal{C}$ starting with the normal vectors of the curve $\gamma(r)$. A similar construction was used in \cite[Section 5]{DO} to give local examples of non isoparametric immersions of spheres with curvature normals of constant length.
\end{rem}

\section{Structure of constant angle surfaces when $\theta_1  = 0$}\label{section structure}

Here we show that any surface $\Sigma$ with constant principal angles $\theta_1=0$ and arbitrary $\theta_2$ with respect to a plane $\Pi$ is (up to rigid motion) a union of holonomy tubes over a plane curve $\mathcal{C} \subset \Pi$. More precisely, there exists a curve $\gamma \subset \nu_p(\mathcal{C})$ such that $\Sigma$ is locally the union of the holonomy tubes of $\mathcal{C}$ through the points of $\gamma$.

\begin{theorem} Any surface $\Sigma$ with constant principal angles $\theta_1=0$ and arbitrary $\theta_2$ with respect to a plane $\Pi$ is (up to rigid motion) a union of holonomy tubes over a plane curve $\mathcal{C} \subset \Pi$. Moreover, if $\gamma \subset \nu_p(\mathcal{C})$ is the curve of the starting points of the tubes, then $\gamma$ is a helix curve of $\mathbb{R}^3 \cong \nu_p(\mathcal{C}).$ 
\end{theorem}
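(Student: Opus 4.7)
The plan is to transfer the argument of Theorem \ref{spherical} to this more general setting, now \emph{without} the spherical constraint. Recall from Section \ref{subsec:un-angulo-zero} that under $\theta_1=0$ we have $e_1 = T_1,$ the vector field $T_2$ is geodesic, and $dt(T_2) = 0.$ I take $\mathcal{C}$ to be the flow line of $T_1$ through a chosen point $p_0 \in \Sigma,$ and for each $p \in \mathcal{C}$ denote by $\gamma_p(r)$ the arc-length flow line of $T_2$ starting at $p,$ writing $\gamma := \gamma_{p_0}.$ Because $T_1 = e_1$ takes values in $\Pi,$ the tangent of $\mathcal{C}$ always lies in $\Pi,$ so $\mathcal{C}$ sits in a plane parallel to $\Pi;$ after an ambient translation we may assume $\mathcal{C} \subset \Pi.$

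The second step is to verify $\gamma_p \subset \nu_p(\mathcal{C})$ for every $p \in \mathcal{C}.$ The key identity is
\begin{equation*}
D_{T_2}T_1 = \nabla_{T_2}T_1 + \alpha(T_2,T_1) = dt(T_2)T_2 + 0 = 0,
\end{equation*}
so $T_1$ is $\R^4$-parallel along $\gamma_p,$ and integrating $\frac{d}{dr}\langle \gamma_p(r)-p,\, T_1(p)\rangle = \langle T_2,\,T_1\rangle = 0$ yields the claim. To recognize the holonomy tube structure, I introduce the local parametrization $\Phi(s,r) := \gamma_{\mathcal{C}(s)}(r)$ near $p_0$ and decompose $\partial_s\Phi = a T_1 + b T_2.$ Matching $\partial_r\partial_s\Phi$ with $\partial_s\partial_r\Phi$ using $D_{T_2}T_1=0,$ $D_{T_2}T_2 = m_1\xi_1$ and $D_{T_1}T_2 = -dt(T_1)T_1$ produces the ODEs $\partial_r a = -a\,dt(T_1)$ and $\partial_r b = 0,$ which, combined with $a(s,0)=1,$ $b(s,0)=0,$ force $b\equiv 0.$ Because $T_1$ is $\R^4$-parallel along the $r$-curves, $T_1(\Phi(s,r)) = T_{\mathcal{C}}(s)$ does not depend on $r,$ so $\partial_s\Phi = a(s,r)\,T_{\mathcal{C}}(s).$ Fixing $r_0$ and setting $\eta(s) := \Phi(s,r_0) - \mathcal{C}(s),$ step two gives $\eta(s) \in T_{\mathcal{C}(s)}^{\perp}\mathcal{C},$ while $\eta'(s) = (a(s,r_0)-1)T_{\mathcal{C}}(s)$ is tangent to $\mathcal{C};$ this is exactly $\nabla^{\nu}_s\eta=0,$ so $s\mapsto\Phi(s,r_0)$ is the holonomy tube of $\mathcal{C}$ passing through $\gamma(r_0),$ and $\Sigma$ is the union of these tubes as $r_0$ varies.

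For the helix character of $\gamma$ I apply $D_X e_2 = -df(X)e_1 = -\frac{dt(X)}{\cos\theta_2}T_1$ with $X=T_2;$ the vanishing $dt(T_2)=0$ gives $D_{T_2}e_2 \equiv 0,$ so $e_2$ is a constant vector of $\R^4$ along $\gamma.$ The vector $e_2(p_0)$ is orthogonal to $e_1(p_0)=T_1(p_0) = T_{\mathcal{C}}(p_0),$ hence lies in $\nu_{p_0}(\mathcal{C})\cong\R^3,$ and
\begin{equation*}
\langle \gamma'(r),\, e_2(p_0)\rangle = \langle T_2(\gamma(r)),\, e_2(\gamma(r))\rangle = \cos\theta_2
\end{equation*}
is constant, so $\gamma$ is a classical helix in $\R^3$ with axis direction $e_2(p_0).$ The delicate point is the commutation computation in step three: since $[T_1,T_2] = -dt(T_1)T_1$ does not vanish in general, $\Phi$ is not a Riemannian product and the alignment $\partial_s\Phi \parallel T_1$ is not a formal consequence of a group action; it has to be extracted from the specific vanishings $dt(T_2)=0$ and $\alpha(T_1,T_2)=0$ provided by the $\theta_1=0$ structure equations.
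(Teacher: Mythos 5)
Your proof is correct and follows essentially the same route as the paper's: take $\mathcal{C}$ to be a flow line of $T_1$ lying in (a translate of) $\Pi$, use $D_{T_2}T_1=\nabla_{T_2}T_1+\alpha(T_2,T_1)=0$ to place the $T_2$-flow lines in the normal spaces of $\mathcal{C}$, and recognize the fixed-$r$ curves as normal-parallel displacements of $\mathcal{C}$, i.e.\ holonomy tubes. Your mixed-partials computation forcing $\partial_s\Phi\parallel T_1$ and the observation $D_{T_2}e_2=0$ yielding the constant slope $\cos\theta_2$ simply make explicit two steps that the paper delegates to ``the same argument as in the proof of Theorem \ref{spherical}.''
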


\begin{proof}
Observe that the structure equations (\ref{zero}) imply that $T_2$ is a geodesic vector field. Now, as in the proof of Theorem \ref{spherical} we introduce flat coordinates $x,y$ and Fermi coordinates $r,s$, where $s$ is the arc length parameter of a level curve $\mathcal{C}$ of the function $n$, i.e. a flow line of the vector field $T_1$. Since $T_1$ is always tangent to $\Pi$ we can assume that $\mathcal{C} \subset \Pi$. Fix $p_0 \in \mathcal{C} \subset \Pi$ and let $\gamma(r) \subset \mathbb{R}^4$ be the flow line of $T_2$ starting from $p_0.$
So $\gamma(r) \subset \nu_{p_0}(\mathcal{C})$ since $T_1$ is constant in $\mathbb{R}^4$ along $\gamma(r)$. Then the same argument as in the proof of Theorem \ref{spherical} shows that the holonomy tubes through $\gamma(r)$ are contained in $\Sigma$. Thus, $\Sigma$ is locally the union of such holonomy tubes. Finally, notice that $\gamma \subset \mathbb{R}^3 \cong \nu_{p_0}(\mathcal{C})$ is a helix curve with respect to $\Pi^{\perp} \subset \nu_{p_0}(\mathcal{C}),$ i.e. is a curve of constant slope with respect to the normal vector $N_{p_0}$ of $\mathcal{C}$ at $p_0$.
\end{proof}

\begin{rem}
\em
Actually any surface $\Sigma$ with constant principal angles with either $T_1$ or $T_2$ geodesic can be described as a union of holonomy tubes over a curve $\mathcal{C}$ by using a helix curve $\gamma_p \subset \nu_p(\mathcal{C})$ as starting points of the holonomy tubes; moreover, the point $p$ may be arbitrarily chosen. Notice that the helices $(\gamma_p)_{p\in C}$ are all congruent since the parallel transport with respect to the normal connection is an isometry between the normal spaces. So, surfaces with a geodesic vector field $T_1$ or $T_2$ are union of congruent helix curves.
\end{rem}

\section{Existence}\label{Ex}

In this section we discuss the existence of non-trivial surfaces $\Sigma \subset \mathbb{R}^4$ with
prescribed constant principal angles with respect to a plane $\Pi$. More precisely, we seek a surface $\Sigma$ such that $\theta_1$ and $\theta_2$ belong to $(0,\pi/2);$ $\Sigma$ is thus locally the graph of a function $F: \Pi \rightarrow \Pi^{\perp}$. Taking orthonormal bases of $\Pi$ and $\Pi^{\perp},$ and writing $F(x,y)=(f(x,y),g(x,y))$ in these bases, the surface $\Sigma\subset \mathbb{R}^4$ is locally parametrized by
\[ (x,y) \rightarrow (x,y,f(x,y),g(x,y)) .\]

Let $\partial_x := (1,0,f_x(x,y),g_x(x,y)), \partial_y := (0,1,f_y(x,y),g_y(x,y))$ be a basis of the tangent plane $\mathrm{T}_{(x,y)}\Sigma$.
As usual, let $\mathrm{E},\mathrm{F},\mathrm{G}$ denote the coefficients of the metric tensor of $\Sigma$ in the coordinates $(x,y),$ i.e.
\[\begin{array}{ccccl}
 \mathrm{E} &:=& \langle \partial_x,\partial_x \rangle &=& 1 + f_x^2 + g_x^2 \, \\
 \mathrm{F} &:=& \langle \partial_x,\partial_y \rangle &=& f_xf_y + g_xg_y \, \\
 \mathrm{G} &:=& \langle \partial_y,\partial_y \rangle &=& 1+ f_y^2 + g_y^2 .
 \end{array}\]
Set $W := \mathrm{T}_{(x,y)}\Sigma$. Then the symmetric operator $\mathcal{S}_{\Pi W}$ defined Section \ref{section principal angles} satisfies
$$\langle S_{\Pi W}\left(\partial_x\right),\partial_x\rangle=\langle S_{\Pi W}\left(\partial_y\right),\partial_y\rangle=1\hspace{.5cm}\mbox{and}\hspace{.5cm}\langle S_{\Pi W}\left(\partial_x\right),\partial_y\rangle=0.$$
So the matrix of $\mathcal{S}_{\Pi W}$ in $(\partial_x,\partial_y)$ is the inverse of the matrix of the metric, i.e. 
\[\mathcal{S}_{\Pi W} = \left(
     \begin{array}{cc}
       \mathrm{E} & \mathrm{F} \\
       \mathrm{F} & \mathrm{G} \\
     \end{array}
   \right)^{-1}.
\]
We deduce the following proposition.

\begin{prop} 
With the notation above, $\Sigma$ has constant principal angles with respect to the plane $\Pi$ if and only if the matrix tensor $\displaystyle{\left(
     \begin{array}{cc}
       \mathrm{E} & \mathrm{F} \\
       \mathrm{F} & \mathrm{G} \\
     \end{array}
   \right)}$ has constant eigenvalues.
\end{prop}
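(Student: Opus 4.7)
The plan is to reduce the claim to a one-step consequence of Proposition \ref{Lagrange}, by using the matrix identity $\mathcal{S}_{\Pi W} = M^{-1}$ just derived, where $M = \bigl(\begin{smallmatrix}\mathrm{E} & \mathrm{F} \\ \mathrm{F} & \mathrm{G}\end{smallmatrix}\bigr)$ denotes the matrix of the induced metric in the basis $(\partial_x,\partial_y)$ of $W = T_{(x,y)}\Sigma$.

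By Proposition \ref{Lagrange} the eigenvalues of the symmetric endomorphism $\mathcal{S}_{\Pi W}\in\mathrm{Sym}(W)$ at a point $(x,y)\in\Sigma$ are $\cos^2\theta_1(x,y)$ and $\cos^2\theta_2(x,y)$, where $\theta_1,\theta_2$ are the principal angles of $W$ with respect to $\Pi$. Since the spectrum of an endomorphism is basis-independent, the matrix $M^{-1}$ has the same eigenvalues, and therefore $M$ has eigenvalues $\sec^2\theta_1(x,y)$ and $\sec^2\theta_2(x,y)$. The inversion of $M$ is legitimate because, $\Sigma$ being given as a graph over $\Pi$, the orthogonal projection $T_{(x,y)}\Sigma\to\Pi$ is surjective, so neither principal angle equals $\pi/2$ and both $\cos^2\theta_i$ are strictly positive.

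Finally, $\theta\mapsto\sec^2\theta$ is a smooth bijection of $[0,\pi/2)$ onto $[1,\infty)$, so the unordered pair $(\theta_1,\theta_2)$ is constant along $\Sigma$ if and only if the unordered pair of eigenvalues of $M$ is constant; this gives both directions of the equivalence simultaneously. I do not foresee a serious obstacle: once Proposition \ref{Lagrange} and the identification $\mathcal{S}_{\Pi W} = M^{-1}$ are in hand, the argument is essentially a one-line spectral computation together with the observation that the graph hypothesis rules out the degenerate angle $\pi/2$.
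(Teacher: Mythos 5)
Your proof is correct and follows exactly the path the paper intends: immediately after computing $\mathcal{S}_{\Pi W}=M^{-1}$ the paper states the proposition without a separate proof, the intended argument being precisely your spectral observation via Proposition \ref{Lagrange}. Your extra remark that the graph hypothesis forces both $\cos^2\theta_i>0$, so that $M$ is invertible, is a clean and worthwhile justification that the paper leaves implicit.
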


\begin{cor}\label{iguales} Let $\Sigma\subset \mathbb{R}^4$ be a surface with constant principal angles with respect to a plane $\Pi$. If the principal angle $\theta_1$ has multiplicity $2$ then  $\Sigma$ is totally geodesic, i.e. is an open subset of a $2$-plane.
\end{cor}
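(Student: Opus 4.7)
The plan is to apply the proposition immediately above the statement, which reduces the problem to an analysis of the graph representation of $\Sigma$. First I would dispose of the two degenerate subcases. If $\theta_1=\theta_2=0$ then by Proposition \ref{Lagrange} the quadratic form $Q_{\Pi\,T_p\Sigma}$ is the identity on every tangent plane, so $T_p\Sigma=\Pi$ for all $p$ and $\Sigma$ is an open subset of a translate of $\Pi$. If $\theta_1=\theta_2=\pi/2$ then $Q_{\Pi\,T_p\Sigma}\equiv 0$, so $T_p\Sigma\subset\Pi^{\perp}$ for all $p$ and $\Sigma$ is an open subset of a translate of $\Pi^{\perp}$. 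In both degenerate cases $\Sigma$ is flat in $\mathbb R^4$.

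In the remaining case $0<\theta_1=\theta_2<\pi/2$, the projection $\Sigma\to\Pi$ is a local diffeomorphism, so $\Sigma$ is locally the graph of a map $F=(f,g)\colon U\subset\Pi\to\Pi^{\perp}$. By Proposition \ref{Lagrange} the eigenvalues of $\mathcal S_{\Pi W}$ are $\cos^2\theta_1$ and $\cos^2\theta_2$, which now coincide; since $\mathcal S_{\Pi W}$ is symmetric, $\mathcal S_{\Pi W}=\cos^2(\theta_1)\,\mathrm{Id}$. By the proposition this matrix is the inverse of the metric matrix, hence $E=G=\sec^2(\theta_1)$ and $F=0$. Setting $c=\tan^2(\theta_1)>0$, this rephrases as
\[
f_x^2+g_x^2=f_y^2+g_y^2=c,\qquad f_xf_y+g_xg_y=0,
\]
i.e.\ the Jacobian $J$ of $F$ satisfies $J^{T}J=c\,\mathrm{Id}$, so $J/\sqrt{c}$ is a smooth map from the connected domain $U$ into $O(2)$.

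By continuity $J/\sqrt{c}$ stays in a single component of $O(2)$; I will treat the $SO(2)$ case (the other is analogous via complex conjugation). Writing $J/\sqrt c = R(\phi(x,y))$ for a smooth angle function $\phi$, the pair $(f,g)$ satisfies the Cauchy--Riemann equations, so $h:=f+ig$ is holomorphic in $z=x+iy$ with $|h'|\equiv\sqrt{c}$. By the open mapping principle a holomorphic function with constant modulus of derivative has $h'$ constant, so $h(z)=az+b$; equivalently, imposing $f_{xy}=f_{yx}$ and $g_{xy}=g_{yx}$ directly forces $\phi_x=\phi_y=0$. Either way $F$ is affine, so the graph is an open subset of a $2$-plane, i.e.\ $\Sigma$ is totally geodesic.

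The only real obstacle is the planar rigidity step: deducing from $J^{T}J=c\,\mathrm{Id}$ on a connected open set that $J$ is constant. This is the Liouville-type statement for conformal maps of $\mathbb R^2$ with constant conformal factor, which I handle via the holomorphic reformulation and the maximum modulus (or open mapping) principle.
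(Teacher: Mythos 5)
Your proof is correct and follows essentially the same route as the paper: both reduce, via the graph representation and the preceding proposition, to $E=G=\sec^2\theta_1$, $F=0$, i.e.\ to a map whose Jacobian is a constant multiple of an orthogonal matrix, and then conclude by rigidity that the map is affine. The only differences are matters of detail: you prove the rigidity step explicitly (via the holomorphic reformulation and constancy of $|h'|$) where the paper simply asserts that such a local isometry of $\mathbb{R}^2$ is linear, and you separately treat the degenerate cases $\theta_1=\theta_2\in\{0,\pi/2\}$, where the graph representation over $\Pi$ is unavailable and the paper leaves the argument implicit.
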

\begin{proof} If the principal angle $\theta_1$ has multiplicity $2$ then the immersion is orthogonal, i.e. $\mathrm{F} \equiv 0$. Thus $\mathrm{E}$ and $\mathrm{G}$ are constants. This implies that there exist constants $c_1,c_2$ such that the map $(x,y) \rightarrow (c_1f(x,y),c_2g(x,y))$ is a local isometry of $\mathbb{R}^2$. Thus $f,g$ are linear functions and $\Sigma$ is totally geodesic.
\end{proof}

Since $\mathcal{S}_{\Pi W}$ has constant eigenvalues if and only if its characteristic polynomial does not depend on the point $(x,y),$ we get:
\begin{prop}\label{graficoHelice} 
With the notation above, $\Sigma$ has constant principal angles $\theta_1,\theta_2\in (0,\pi/2)$ with respect to the plane $\Pi$ if and only if
\[\begin{array}{lcl}
  \mathrm{E} + \mathrm{G} &=& \sec^2(\theta_1) + \sec^2(\theta_2)\\
  \mathrm{EG} - \mathrm{F}^2 &=& \sec^2(\theta_1)\sec^2(\theta_2).
\end{array}\]
\end{prop}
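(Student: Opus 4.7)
The plan is to reduce this directly to the previous proposition together with Proposition \ref{Lagrange}. Let $M := \begin{pmatrix} \mathrm{E} & \mathrm{F} \\ \mathrm{F} & \mathrm{G} \end{pmatrix}$ denote the matrix of the first fundamental form in the basis $(\partial_x,\partial_y)$. From the discussion just preceding the statement, the matrix of $\mathcal{S}_{\Pi W}$ in the same basis is $M^{-1}$. By Proposition \ref{Lagrange}, the eigenvalues of $\mathcal{S}_{\Pi W}$ are $\cos^2(\theta_1)$ and $\cos^2(\theta_2)$, so the eigenvalues of $M$ are $\sec^2(\theta_1)$ and $\sec^2(\theta_2)$.

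By the proposition just stated, $\Sigma$ has constant principal angles with respect to $\Pi$ if and only if $M$ has constant eigenvalues. For a $2 \times 2$ symmetric matrix, having a prescribed pair of eigenvalues $\{\sec^2(\theta_1),\sec^2(\theta_2)\}$ is equivalent to having the corresponding trace and determinant, namely
\[
\mathrm{tr}(M) = \sec^2(\theta_1) + \sec^2(\theta_2), \qquad \det(M) = \sec^2(\theta_1)\sec^2(\theta_2).
\]
Since $\mathrm{tr}(M) = \mathrm{E} + \mathrm{G}$ and $\det(M) = \mathrm{EG} - \mathrm{F}^2$, this is exactly the pair of scalar identities claimed.

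There is no real obstacle here: the content is entirely in Proposition \ref{Lagrange} (which identifies the eigenvalues of the projection-related operator) and in the fact that a symmetric $2\times 2$ matrix is determined up to orthogonal conjugation by its trace and determinant, so pinning down these two scalar invariants is the same as pinning down the unordered pair of eigenvalues. The only small point worth flagging is that one must restrict to $\theta_1,\theta_2 \in (0,\pi/2)$ so that $\sec^2\theta_i$ is well defined and $M$ is invertible (which is automatic, as $M$ is the matrix of a Riemannian metric).
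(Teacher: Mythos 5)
Your proof is correct and follows essentially the same route as the paper: the paper likewise passes from the previous proposition (constant principal angles iff the metric matrix has constant eigenvalues) and Proposition \ref{Lagrange} to the observation that the eigenvalues of the metric matrix are $\sec^2(\theta_1),\sec^2(\theta_2)$, which is equivalent to fixing the two coefficients of its characteristic polynomial, i.e.\ the trace and determinant.
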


Since \[ \mathrm{EG} - \mathrm{F}^2 = 1 + f_x^2 + g_x^2 + f_y^2 + g_y^2 + (f_x g_y - f_yg_x)^2, \]
in order to show the existence of surfaces with constant principal angles we have to solve the following PDE system:
\[ \left\{
  \begin{array}{lcl}
   2 + f_x^2 + g_x^2 + f_y^2 + g_y^2 &=& \sec^2(\theta_1) + \sec^2(\theta_2)\\
  (f_x g_y - f_yg_x)^2 &=& \sec^2(\theta_1)\sec^2(\theta_2) + 1 -\sec^2(\theta_1) - \sec^2(\theta_2) \\
   & = & \tan^2(\theta_1)\tan^2(\theta_2).
  \end{array}
\right. \]

Taking the square root we get the equivalent system
\[ \left\{
  \begin{array}{lcl}
  f_x^2 + g_x^2 + f_y^2 + g_y^2 &=& c_1 := \sec^2(\theta_1) + \sec^2(\theta_2) - 2,\\
  f_x g_y - f_yg_x &=& c_2:= \tan(\theta_1)\tan(\theta_2).
  \end{array}
\right. \]

Since $c_2 \neq 0$ we can divide both hands by $c_2$ and obtain the following proposition.

\begin{prop} \label{symplecto} There exists a non totally geodesic surface with constant principal angles if and only if there exists a non linear local symplectomorphism of $\mathbb{R}^2$ whose Jacobian matrix has constant length.
\end{prop}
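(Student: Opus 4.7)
My plan is to realize the equivalence as a direct reformulation of Proposition \ref{graficoHelice}: since the constant $c_2=\tan\theta_1\tan\theta_2$ is nonzero under the hypothesis $\theta_i\in(0,\pi/2)$, one can rescale the graph map $F=(f,g)$ to normalize its Jacobian determinant to $1$, turning the PDE system characterizing constant principal angles into the pair of conditions \emph{symplectomorphism} plus \emph{constant Jacobian length}.

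For the forward implication, suppose $\Sigma$ is a non totally geodesic surface with constant principal angles $\theta_1,\theta_2\in(0,\pi/2)$. By Proposition \ref{graficoHelice}, $\Sigma$ is locally the graph of a smooth map $F=(f,g):U\subset\mathbb{R}^2\to\mathbb{R}^2$ with $\det JF\equiv c_2=\tan\theta_1\tan\theta_2>0$ and $\|JF\|^2\equiv c_1=\sec^2\theta_1+\sec^2\theta_2-2$. Setting $\tilde F:=F/\sqrt{c_2}$, one obtains $\det J\tilde F\equiv 1$, so $\tilde F$ is a local symplectomorphism of $\mathbb{R}^2$, and $\|J\tilde F\|^2\equiv c_1/c_2$ is constant. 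Moreover, $\tilde F$ cannot be linear, for otherwise $F$ would be affine and its graph $\Sigma$ would be an open subset of a $2$-plane, contradicting the assumption (cf. Corollary \ref{iguales}).

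For the reverse implication, suppose $\phi=(f,g):U\to\mathbb{R}^2$ is a non linear local symplectomorphism with $\|J\phi\|^2\equiv c$ constant. The condition $\det J\phi\equiv 1$, combined with the elementary inequalities $f_x^2+g_y^2\geq 2|f_xg_y|$ and $f_y^2+g_x^2\geq 2|f_yg_x|$, yields
\[ c = f_x^2+f_y^2+g_x^2+g_y^2 \geq 2\bigl(|f_xg_y|+|f_yg_x|\bigr) \geq 2|f_xg_y-f_yg_x| = 2.\]
Hence I may choose $\theta_1\in(0,\pi/2)$ and set $\theta_2:=\pi/2-\theta_1$, so that $\tan\theta_1\tan\theta_2=1$ and $\sec^2\theta_1+\sec^2\theta_2-2=\tan^2\theta_1+\cot^2\theta_1$; since the map $t\mapsto t+1/t$ takes all values in $[2,\infty)$ on $(0,\infty)$, such a $\theta_1$ can be chosen with $\tan^2\theta_1+\cot^2\theta_1=c$. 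For this pair $(\theta_1,\theta_2)$, the map $\phi$ satisfies the PDE system of Proposition \ref{graficoHelice}, so its graph is a surface with constant principal angles $\theta_1,\theta_2$, non totally geodesic because $\phi$ is not linear.

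The only substantive point is the arithmetic-geometric mean bound $c\geq 2$, which guarantees that suitable angles always exist in the reverse direction; beyond this, the proposition is essentially a translation of Proposition \ref{graficoHelice} into symplectic language, and I do not anticipate any further difficulty.
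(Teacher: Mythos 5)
Your proof is correct and follows the paper's own route: the forward direction is exactly the paper's rescaling of the graph map by $\sqrt{c_2}$, and the converse is the same inversion of the system in Proposition \ref{graficoHelice} that the paper dismisses with ``follows from the equivalences above.'' Your added detail in the converse --- the bound $c\geq 2$ via the arithmetic--geometric mean inequality and the explicit choice $\theta_2=\pi/2-\theta_1$ with $\tan^2\theta_1+\cot^2\theta_1=c$ --- is a welcome elaboration of a step the paper leaves implicit, not a different argument.
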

\begin{proof}
Assume that such a surface exists, and set $\psi(x,y) := (\frac{f}{\sqrt{c_2}}, \frac{g}{\sqrt{c_2}})$.
Then the system above implies that $\psi$ is a non linear local symplectomorphism of $\mathbb{R}^2$ whose Jacobian matrix has constant length. The converse follows from the equivalences above.
\end{proof}

\subsection{Construction of local symplectomorphisms}

We prove the following existence result:

\begin{theorem}
\label{hay}
There exist non linear local symplectomorphisms of $\mathbb{R}^2$ whose Jacobian matrix has constant length. Thus, there exist non totally geodesic surfaces with different constant principal angles.
\end{theorem}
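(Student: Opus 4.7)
The plan is to apply the Cauchy-Kowalewski theorem after putting the system characterizing such symplectomorphisms in evolution form. Recall from Proposition \ref{graficoHelice} (and the rescaling in Proposition \ref{symplecto}) that we seek analytic $f, g : U \subset \mathbb{R}^2 \to \mathbb{R}$ satisfying
\begin{equation*}
f_x^2 + g_x^2 + f_y^2 + g_y^2 = c_1, \qquad f_x g_y - f_y g_x = c_2,
\end{equation*}
with $c_1 = \tan^2\theta_1 + \tan^2\theta_2$ and $c_2 = \tan\theta_1 \tan\theta_2$.

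First I would treat $x$ as a time variable and solve these two algebraic equations for $(f_x, g_x)$ in terms of $(f_y, g_y)$. Setting $A = (f_y, g_y)$ and $A^{\perp} = (-g_y, f_y)$, the second equation reads $(f_x, g_x) \cdot A^{\perp} = -c_2$ while the first fixes the norm $|(f_x, g_x)|^2 = c_1 - |A|^2$; decomposing $(f_x, g_x)$ in the orthonormal frame $\{A/|A|, A^{\perp}/|A|\}$ yields the explicit formulas
\begin{equation*}
f_x = \frac{c_2 g_y + \sigma f_y \sqrt{D}}{f_y^2 + g_y^2}, \qquad g_x = \frac{-c_2 f_y + \sigma g_y \sqrt{D}}{f_y^2 + g_y^2}, \qquad \sigma = \pm 1,
\end{equation*}
where $D = (c_1 - f_y^2 - g_y^2)(f_y^2 + g_y^2) - c_2^2$. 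Using the identity $c_1^2 - 4c_2^2 = (\tan^2\theta_1 - \tan^2\theta_2)^2$, the discriminant $D$ is strictly positive precisely on the open annulus $\tan^2\theta_1 < f_y^2 + g_y^2 < \tan^2\theta_2$, which is nonempty because $\theta_1 < \theta_2$. On this region the right-hand sides are real analytic functions of $(f_y, g_y)$ alone, so the system is in normal Cauchy-Kowalewski form.

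Next I would prescribe analytic initial data $f(0,y) = \alpha(y)$, $g(0,y) = \beta(y)$ with $\alpha'(y)^2 + \beta'(y)^2$ staying in the admissible annulus, and chosen so that $(\alpha, \beta)$ is not affine; a concrete choice is $\alpha(y) = ay + \delta \sin y$, $\beta \equiv 0$, with $a^2$ strictly between $\tan^2\theta_1$ and $\tan^2\theta_2$ and $\delta > 0$ sufficiently small. Cauchy-Kowalewski then produces a unique analytic solution $(f, g)$ on a neighborhood of the initial line $\{0\} \times I$; by construction it satisfies both original equations pointwise, so after rescaling as in the proof of Proposition \ref{symplecto} the map $(x,y) \mapsto (f(x,y)/\sqrt{c_2}, g(x,y)/\sqrt{c_2})$ is a local symplectomorphism of $\mathbb{R}^2$ whose Jacobian matrix has constant length, and it is non-linear because the initial data is not affine.

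The main point to verify carefully is that the algebraic elimination genuinely places the system in the quasi-linear normal form demanded by Cauchy-Kowalewski and that the admissible open annulus is large enough to accommodate non-linear analytic initial data; both are settled by the discriminant factorization $c_1^2 - 4c_2^2 = (\tan^2\theta_1 - \tan^2\theta_2)^2$, which identifies the admissible region explicitly and confirms its nonemptiness in the generic case. Substituting this non-linear symplectomorphism into the graph construction of Section \ref{Ex} yields a surface with the prescribed constant principal angles $\theta_1, \theta_2 \in (0, \pi/2)$ that is not totally geodesic, completing both assertions of the theorem.
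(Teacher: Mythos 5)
Your proof is correct, and it takes a genuinely different route from the paper's. The paper eliminates $g$ from the system (assuming $f$ given, solving for $g_x, g_y$ via a compatibility function $\lambda$), thereby obtaining a single \emph{second-order} quasi-linear PDE for $f$ alone; it then establishes a separate general lemma (Theorem \ref{existen}) asserting that any second-order quasi-linear analytic operator admits non-linear solutions, the proof of which requires a somewhat delicate construction of non-characteristic initial data on a curve $\gamma(t)$ with $\nabla f|_{\gamma(t)}=\gamma(t)$. You instead solve the two algebraic constraints explicitly for $(f_x, g_x)$ as analytic functions of $(f_y, g_y)$ on the open annulus $\tan^2\theta_1 < f_y^2 + g_y^2 < \tan^2\theta_2$ (and your factorization $c_1^2 - 4c_2^2 = (\tan^2\theta_1 - \tan^2\theta_2)^2$ correctly identifies this region), which puts the problem into a \emph{first-order} Cauchy--Kowalewski normal form $\partial_x (f,g) = H(\partial_y f, \partial_y g)$. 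The choice of non-characteristic initial data then becomes essentially trivial -- any analytic, non-affine pair $(\alpha,\beta)$ with $\alpha'^2+\beta'^2$ staying in the open annulus works, and your explicit choice $\alpha(y)=ay+\delta\sin y$, $\beta\equiv 0$ satisfies this. One small point of terminology: the reduced system $\partial_x u = H(\partial_y u)$ is not quasi-linear (it is fully nonlinear in $\partial_y u$), but it is in the normal form to which Cauchy--Kowalewski applies, so the argument stands. Your approach buys directness and explicitness (no intermediate second-order PDE, no auxiliary general existence theorem, explicit admissible domain); the paper's approach buys the reusable general Theorem \ref{existen} about second-order quasi-linear operators. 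Both are valid.
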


As explained above we have to show the existence of non linear solutions of the system
\begin{equation}\label{system c2=1}
 \left\{
  \begin{array}{lcl}
  f_x^2 + g_x^2 + f_y^2 + g_y^2 &=& c_1 ,\\
  f_x g_y - f_yg_x &=& 1
  \end{array}
\right. 
\end{equation}
where $c_1 \neq 2$ (if $c_1=2,$ by Corollary \ref{iguales} the solutions are linear functions).

Assume that $f$ is  a (non constant) known function and set $A = g_x, B=g_y$. Then from the second equation in (\ref{system c2=1}) there exists a function $\lambda$ such that
$$A =\frac{-f_y + \lambda f_x}{f_x^2 + f_y^2}\hspace{1cm}\mbox{ and }\hspace{1cm}B=\frac{f_x + \lambda f_y}{f_x^2 + f_y^2}  .$$
Using the first equation in (\ref{system c2=1}), $\lambda$ satisfies
\[ \frac{1+ (f_x^2 + f_y^2)^2 +\lambda^2}{f_x^2 + f_y^2} = c_1,\]
and setting $\Delta := f_x^2 + f_y^2,$ this last equation reads
\[ \lambda^2= c_1 \Delta - 1 - \Delta^2 \, .\]

\begin{rem}
\em
Notice that $c_1 \geq 2,$ and that $c_1 = 2$ implies $\lambda \equiv 0$ and $\Delta \equiv 1$.
So if $\lambda \equiv 0$ and $\Delta \equiv 1$ then $f$ is harmonic with a gradient of constant length.
Then it is not difficult to show (using the theory of complex functions) that $f$ must be linear. This gives another proof of Corollary \ref{iguales}.
\end{rem}

\begin{rem}
\em
The norm of the gradient $\Delta$ is necessarily bounded by
\[ \frac{ c_1 - \sqrt{c_1^2 - 4} }{2} \leq \Delta \leq \frac{ c_1 + \sqrt{c_1^2 - 4} }{2}.\]
\end{rem}

Assume that $c_1 > 2$. Then $f$ is a solution of the following second order quasi-linear PDE:

\begin{equation}
\left(\frac{-f_y + \sqrt{c_1 \Delta - 1 - \Delta^2} f_x}{\Delta}\right)_y =\left (\frac{f_x + \sqrt{c_1 \Delta - 1 - \Delta^2} f_y}{\Delta}\right)_x.
\end{equation}

Reciprocally, if $f$ is a non linear solution of the above equation then it is possible to construct $g$ such that $F(x,y)=(f(x,y),g(x,y))$ is a (non linear) symplectomorphism whose Jacobian matrix has constant length. The existence of such symplectomorphism, i.e. the proof of Theorem \ref{hay}, follows from the following theorem.

\begin{theorem}\label{existen} Let 
\[ \mathrm{L}f = A(f_x,f_y) f_{xx} + B(f_x,f_y) f_{xy} + C(f_x,f_y) f_{yy} + E(f_x,f_y)\]
be a second order quasi-linear operator with analytic coefficients $A,B,C,E$ defined in some open subset $\Omega \subset \mathbb{R}^2$. Then there exists a non linear function $f: D \subset \mathbb{R}^2 \rightarrow \mathbb{R}$ such that $\mathrm{L}f=0$.
\end{theorem}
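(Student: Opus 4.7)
The plan is to invoke the Cauchy--Kowalewski theorem. Since $A,B,C,E$ are real analytic on the open set $\Omega \subset \R^2$ and we only need a \emph{local} solution (the statement allows $D$ to be any open set), it is enough to exhibit a point $(p_0,q_0)\in\Omega$ at which the principal symbol is non-degenerate in some direction and then prescribe analytic Cauchy data there.

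First, I would pick $(p_0,q_0)\in\Omega$ such that $C(p_0,q_0)\neq 0$. This is possible after relabeling coordinates provided not all of $A,B,C$ vanish identically; the truly degenerate case $A\equiv B\equiv C\equiv 0$ would reduce $\mathrm{L}f=0$ to the first-order equation $E(f_x,f_y)=0$, which is not the situation arising in the symplectomorphism PDE derived above and is presumably tacitly excluded in the statement. I would then pose the Cauchy problem on $\{y=0\}$ with
\[ f(x,0) = p_0 x + x^2, \qquad f_y(x,0) = q_0. \]
Both data are analytic, and the ansatz is designed so that the trace is non-linear.

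Next, I would verify non-characteristicity. Along $y=0$ one has $f_x(x,0)=p_0+2x$ and $f_y(x,0)=q_0$, hence $C(f_x,f_y)\big|_{y=0}=C(p_0+2x,q_0)$, which equals $C(p_0,q_0)\neq 0$ at $x=0$ and therefore is non-zero on a neighborhood of the origin by continuity. On this neighborhood the equation can be solved for the highest-order term,
\[ f_{yy} \;=\; -\,\frac{A(f_x,f_y)f_{xx} + B(f_x,f_y)f_{xy} + E(f_x,f_y)}{C(f_x,f_y)}, \]
with an analytic right-hand side in its arguments near the initial values.

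Finally, the Cauchy--Kowalewski theorem produces a unique real-analytic function $f$ defined on an open set $D$ containing the origin and satisfying $\mathrm{L}f=0$. Non-linearity is automatic, since $f(x,0)=p_0 x+x^2$ is already non-linear as a function of $x$. The main (and essentially only) obstacle is the non-characteristic selection of the initial data, which the explicit ansatz above resolves; the quasi-linear rather than semi-linear nature of $\mathrm{L}$ poses no additional difficulty because Cauchy--Kowalewski applies verbatim once the leading coefficient in the normal direction is non-zero along the initial curve.
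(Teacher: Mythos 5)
Your proof is correct and rests on the same engine as the paper's: the Cauchy--Kowalewski theorem applied to non-characteristic, analytic, non-linear Cauchy data, with the key point (which you handle correctly) that $\Omega$ lives in gradient space, so the data must keep $(f_x,f_y)$ inside $\Omega$. The implementations differ in how the non-characteristic data are produced. You normalize so that the initial curve is the coordinate line $\{y=0\}$ and take explicit polynomial data $f(x,0)=p_0x+x^2$, $f_y(x,0)=q_0$; non-characteristicity then reduces to $C(p_0,q_0)\neq 0$, and the gradient stays in $\Omega$ near $(p_0,q_0)$ by openness and continuity. The paper instead avoids any change of coordinates: it views the principal symbol as a quadratic form $g=A\,dq^2+B\,dp\,dq+C\,dp^2$ on $\Omega$, picks an analytic direction field $V$ with $g(V,V)\neq 0$ near a point $(p_0,q_0)$, takes an initial curve $\gamma$ orthogonal to $V$ (identifying the $(p,q)$- and $(x,y)$-planes), and rigs the data $f=\langle\gamma,\gamma\rangle/2$, $\partial f/\partial V=\langle\gamma,V\rangle$ precisely so that $\nabla f|_{\gamma(t)}=\gamma(t)$; this forces the gradient to trace $\gamma\subset\Omega$ and makes the non-characteristic condition exactly $g(V,V)\neq 0$, while non-linearity follows because $\nabla f$ is non-constant along $\gamma$. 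Your route is more elementary and explicit; the paper's is coordinate-free and gives sharper control of where the gradient lands. One small caution about your reduction: a mere relabeling $x\leftrightarrow y$ only swaps $A$ and $C$ and does not cover the case $A\equiv C\equiv 0$, $B\not\equiv 0$; you need a genuine rotation of the $(x,y)$-plane (which preserves the quasi-linear form of $\mathrm{L}$, with the coefficients composed with the induced rotation of the gradient variables) to arrange $C\neq 0$ at some point. With that repair, and granting the tacit hypothesis $(A,B,C)\not\equiv(0,0,0)$ --- which the paper also assumes and which you correctly flag --- your argument is complete.
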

\begin{proof}
We can apply the Cauchy-Kowalewski theorem to solve the equation $\mathrm{L}f=0$ as soon as we can find non characteristic real analytic initial data (see \cite[p. 56]{J} for details). Let us set, as it is standard, $p = f_x, q = f_y$. Let $g$ be the quadratic differential form defined on $\Omega \subset \mathbb{R}^2$ by
 \[ g = Adq^2 + Bdpdq + C dp^2 \, \, . \]

Notice that $g$ is not identically zero since $\mathrm{L}$ is a differential operator of second order. So we can find an analytic vector field $V$ such that
 \[ g(V,V) \neq 0 \, \, \]
 around a point $I_0 = (p_0,q_0) \in \Omega$. We can also consider $(p_0,q_0)$ as a point in the $(x,y)$ plane. So $V(x,y)$ is also a vector field around $(p_0,q_0)$ in the plane $(x,y)$. It is not difficult to see that there exists a non constant analytic curve $\gamma(t)$ such that $\gamma(0) = (p_0,q_0)$ and $\langle \gamma'(t), V(\gamma(t)) \rangle = 0$, where $\langle,\rangle$ is the standard scalar product of $\mathbb{R}^2,$ i.e. such that $V$ is normal to $\gamma$.  Consider the following initial conditions on $\gamma(t)$ for the Cauchy problem for the quasi-linear PDE: 
 \[ f(t) = \frac{\langle \gamma(t), \gamma(t) \rangle}{2} \hspace{1cm}\mbox{and}\hspace{1cm}\frac {\partial f}{\partial V}(t) = \langle \gamma(t), V(\gamma(t)) \rangle.\]
Then these initial conditions are analytic and non characteristic. Indeed, the condition $g(V,V) \neq 0$ holds for the initial data $(f(t),  \frac {\partial f}{\partial V}(t) )$ (since $(\nabla f)|_{\gamma(t)} = \gamma(t)$) and this is exactly the condition on the initial data to be non characteristic. Thus we can apply the Cauchy-Kowalewski theorem to get a solution $f$ around $(p_0,q_0)$.  Moreover $\nabla f$ is not constant since $(\nabla f)|_{\gamma(t)} = \gamma(t)$. \end{proof}

\section{Existence of surfaces without geodesic principal directions}\label{section surfaces without geodesic}

Here we show that there exist surfaces with constant principal angles such that both vector fields $T_1$ or $T_2$ are not geodesic.

\subsection{The rank of the first normal space of a graph.}

Assume that $\Sigma$ is parametrized by $F(x,y) = (x,y,f(x,y),g(x,y)).$ The tangent space at the point $F(x,y)$ is generated by the vectors
$$F_x= (1,0,f_x,g_x)\hspace{1cm}\mbox{and}\hspace{1cm}F_y= (0,1,f_y,g_y),$$
and the normal space by the vectors
$$N_1=(-f_x,-f_y,1,0)\hspace{1cm}\mbox{and}\hspace{1cm}N_2=(-g_x,-g_y,0,1).$$
The first normal space of the graph is generated by the normal vectors 
$$\alpha(\partial_x,\partial_x)=(F_{xx})^{\perp},\hspace{1cm}\alpha(\partial_x,\partial_y)=(F_{xy})^{\perp}\hspace{.5cm}\mbox{ and }\hspace{.5cm}\alpha(\partial_y,\partial_y)=(F_{yy})^{\perp},$$ 
where $(\cdot)^{\perp}$ means the normal component.

\begin{lema}\label{N1} The first normal space of the graph $F(x,y)$ has rank one if and only if the matrix
\[\left(
  \begin{array}{ccc}
    f_{xx} & f_{xy} & f_{yy} \\
    g_{xx} & g_{xy} & g_{yy} \\
  \end{array}
\right)\]
has rank one.
\end{lema}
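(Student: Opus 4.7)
The plan is to unwind the definitions and reduce everything to a transversality observation. First I compute the three generators of the first normal space explicitly as normal projections of the partial derivatives
\[
F_{xx}=(0,0,f_{xx},g_{xx}),\quad F_{xy}=(0,0,f_{xy},g_{xy}),\quad F_{yy}=(0,0,f_{yy},g_{yy}),
\]
namely $\alpha(\partial_x,\partial_x)=F_{xx}^{\perp}$, $\alpha(\partial_x,\partial_y)=F_{xy}^{\perp}$, $\alpha(\partial_y,\partial_y)=F_{yy}^{\perp}$. The key observation is that all three of these second derivatives lie in the auxiliary plane $V=\{0\}\times\{0\}\times\mathbb{R}^2\subset\mathbb{R}^4$.

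Next I would check that $V$ is transverse to $T_{F(x,y)}\Sigma$. Indeed, any tangent vector has the form $\alpha F_x+\beta F_y$, and vanishing of its first two components forces $\alpha=\beta=0$; hence $V\cap T_{F(x,y)}\Sigma=\{0\}$. Since the orthogonal projection $\pi^{\perp}\colon\mathbb{R}^4\to N_{F(x,y)}\Sigma$ has kernel exactly $T_{F(x,y)}\Sigma$, its restriction $\pi^{\perp}|_{V}$ is injective; as both $V$ and $N_{F(x,y)}\Sigma$ are two-dimensional, $\pi^{\perp}|_{V}$ is in fact a linear isomorphism.

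From this isomorphism, the rank of $\{\alpha(\partial_x,\partial_x),\alpha(\partial_x,\partial_y),\alpha(\partial_y,\partial_y)\}$ inside $N_{F(x,y)}\Sigma$ coincides with the rank of $\{F_{xx},F_{xy},F_{yy}\}$ inside $V$. Since the canonical identification $V\cong\mathbb{R}^2$ sends these three vectors to the columns of
\[
\begin{pmatrix} f_{xx} & f_{xy} & f_{yy} \\ g_{xx} & g_{xy} & g_{yy} \end{pmatrix},
\]
I conclude that the first normal space has rank $1$ if and only if this $2\times 3$ matrix has rank $1$, which is the desired equivalence. There is no real obstacle here; the only point worth isolating is the transversality $V\cap T_{F(x,y)}\Sigma=\{0\}$, which is what makes the projection $\pi^{\perp}|_{V}$ an isomorphism and thereby preserves ranks.
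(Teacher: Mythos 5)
Your proof is correct, and it takes a genuinely cleaner route than the paper. The paper works concretely: it expresses each $(F_{**})^{\perp}$ in the explicit normal basis $N_1=(-f_x,-f_y,1,0),\ N_2=(-g_x,-g_y,0,1)$, observes that the $2\times 3$ matrix of inner products $\langle F_{**},N_i\rangle$ equals the Gram matrix $G$ of $(N_1,N_2)$ times the $2\times 3$ coefficient matrix, and then uses invertibility of $G$ to transfer the rank statement; the computation $\langle F_{xx},N_1\rangle=f_{xx}$, etc., then identifies the inner-product matrix with the Hessian matrix in the statement. You instead isolate the underlying geometric reason this works: the vectors $F_{xx},F_{xy},F_{yy}$ live in the fixed plane $V=\{0\}\times\{0\}\times\mathbb{R}^2$, which is transverse to $T_{F(x,y)}\Sigma$, so the orthogonal projection $\pi^{\perp}$ restricted to $V$ is a linear isomorphism onto the normal space and therefore preserves ranks. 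This avoids choosing a basis for the normal space and avoids the Gram matrix step entirely; the paper's $G$-argument is, in effect, a coordinate shadow of your transversality observation. Both proofs are valid; yours makes the rank-preservation mechanism explicit and is somewhat shorter.
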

\begin{proof} 
Writing
$$(F_{xx})^{\perp}=A_{xx} N_1 + B_{xx} N_2,\hspace{1cm} (F_{xy})^{\perp}=A_{xy} N_1 + B_{xy} N_2$$
and 
$$(F_{yy})^{\perp}=A_{yy} N_1 + B_{yy} N_2,$$
by straightforward computations we get
\[ \left(
     \begin{array}{ccc}
       \langle F_{xx}, N_1 \rangle & \langle F_{xy}, N_1 \rangle & \langle F_{yy}, N_1 \rangle \\
       \langle F_{xx}, N_2 \rangle  & \langle F_{xy}, N_2 \rangle  & \langle F_{yy}, N_2 \rangle \\
     \end{array}
   \right) = G . \left(
     \begin{array}{ccc}
       A_{xx} & A_{xy} & A_{yy} \\
       B_{xx} & B_{xy} & B_{yy} \\
     \end{array}
   \right)\]
where 
\[ G = \left(
\begin{array}{cc}
\langle N_1, N_1 \rangle & \langle N_1, N_2 \rangle \\
\langle N_1, N_2 \rangle & \langle N_2, N_2 \rangle \\
\end{array}
\right)
\]
is the Gram matrix of $N_1,N_2$. Since the first normal space of the graph $F(x,y)$ has rank one if and only if the matrix
\[\left(
     \begin{array}{ccc}
       A_{xx} & A_{xy} & A_{yy} \\
       B_{xx} & B_{xy} & B_{yy} \\
     \end{array}
   \right) \]
 has rank one, and since the Gram matrix $G$ is invertible, the first normal space of the graph $F(x,y)$ has rank one if and only if
 the matrix \[ \left(
     \begin{array}{ccc}
       \langle F_{xx}, N_1 \rangle & \langle F_{xy}, N_1 \rangle & \langle F_{yy}, N_1 \rangle \\
       \langle F_{xx}, N_2 \rangle  & \langle F_{xy}, N_2 \rangle  & \langle F_{yy}, N_2 \rangle \\
     \end{array}
   \right) = \left(
  \begin{array}{ccc}
    f_{xx} & f_{xy} & f_{yy} \\
    g_{xx} & g_{xy} & g_{yy} \\
  \end{array}
\right)\] has rank one.
\end{proof}
\begin{lema}\label{N2} Let $f,g : \Omega \subset \mathbb{R}^2 \rightarrow \mathbb{R}$ be two analytic functions, where $\Omega$ is open and connected, such that the matrix
\[\left(
  \begin{array}{ccc}
    f_{xx} & f_{xy} & f_{yy} \\
    g_{xx} & g_{xy} & g_{yy} \\
  \end{array}
\right)\]
has rank one for all point in $\Omega$. Assume that $f$ is not an affine function. Then one of the following conditions holds:
\begin{itemize}
\item The functions $f,g,1$ are linearly dependent over $\mathbb{R}$;
\item The determinant of the Hessian of $f$ vanishes identically.
\end{itemize}
\end{lema}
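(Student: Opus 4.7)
The plan is to reduce the rank-one condition to a scalar proportionality between the two Hessian rows, and then to show that the proportionality factor must be constant unless $\det\mathrm{Hess}(f)\equiv 0$. Since $f$ is analytic on the connected set $\Omega$ and is not affine, the set $W=\{p\in\Omega:(f_{xx},f_{xy},f_{yy})(p)=0\}$ has empty interior, so its complement $U$ is open and dense in $\Omega$. On $U$ the rank-one hypothesis forces the row $(g_{xx},g_{xy},g_{yy})$ to be a uniquely determined scalar multiple $\lambda$ of the nonzero row $(f_{xx},f_{xy},f_{yy})$; locally $\lambda$ is a ratio like $g_{xx}/f_{xx}$, so it defines an analytic function $\lambda:U\to\R$.

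Next I would differentiate the three relations $g_{xx}=\lambda f_{xx}$, $g_{xy}=\lambda f_{xy}$, $g_{yy}=\lambda f_{yy}$ and exploit the commutativity of mixed partials. Comparing $(g_{xx})_y$ with $(g_{xy})_x$ gives $\lambda_y f_{xx}=\lambda_x f_{xy}$, and comparing $(g_{xy})_y$ with $(g_{yy})_x$ gives $\lambda_y f_{xy}=\lambda_x f_{yy}$. Packaged as a matrix identity, these two equations read
\[
\mathrm{Hess}(f)\begin{pmatrix}\lambda_y\\[2pt]-\lambda_x\end{pmatrix}=\begin{pmatrix}0\\[2pt]0\end{pmatrix}\qquad\text{on }U.
\]
Consequently, at every point of $U$ where $\det\mathrm{Hess}(f)\neq 0$, the gradient $\nabla\lambda$ vanishes.

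The dichotomy then falls out naturally. If $\det\mathrm{Hess}(f)\equiv 0$ on $\Omega$, the second alternative holds. Otherwise there is a nonempty open subset $V\subset U$ on which $\det\mathrm{Hess}(f)\neq 0$; the preceding step yields $\nabla\lambda\equiv 0$ on $V$, so $\lambda\equiv c_0$ on any chosen connected component $V_0$ of $V$. Then $g-c_0f$ has identically vanishing second partials on $V_0$. Since $g-c_0 f$ is analytic on the connected set $\Omega$, the identity theorem propagates the vanishing of its three second partials from $V_0$ to all of $\Omega$; hence $g-c_0f$ is affine, which is the first alternative (in the sense that $g$ is a linear combination of $f$ together with affine functions of $x,y$).

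The two delicate points are the following. First, one must verify that the proportionality scalar $\lambda$ is globally well-defined and analytic on the (possibly disconnected) open set $U$; this is automatic from the pointwise uniqueness of $\lambda$ at each point where the $f$-row is nonzero, together with the local analyticity of the ratios. Second, the jump from \emph{locally constant on one component} $V_0$ to \emph{globally affine on all of} $\Omega$ requires the identity theorem for analytic functions and the connectedness of $\Omega$; without analyticity, $\lambda$ could a priori take different constant values on different components of $V$ and one would only conclude a piecewise statement. I expect no deeper obstacle, since the linear system $\mathrm{Hess}(f)\cdot(\lambda_y,-\lambda_x)^{T}=0$ is precisely rigged so that $\det\mathrm{Hess}(f)$ is the sole obstruction to the constancy of $\lambda$.
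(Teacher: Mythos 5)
Your proof is correct and is essentially the paper's own argument in different notation: your relations $\lambda_y f_{xx}=\lambda_x f_{xy}$ and $\lambda_y f_{xy}=\lambda_x f_{yy}$ are exactly the paper's conditions $dM\wedge df_x=0=dM\wedge df_y$ for the proportionality factor, and the final dichotomy ($\lambda$ locally constant versus $\det\mathrm{Hess}(f)\equiv 0$) is the same. You are in fact somewhat more careful than the paper about where $\lambda$ is defined and about propagating the conclusion over all of $\Omega$ by analyticity, and you rightly flag that both arguments really yield $g=c_0 f+(\text{affine function})$ rather than literal linear dependence of $f,g,1$.
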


\begin{proof} Notice that the matrix \[\left(
  \begin{array}{ccc}
    f_{xx} & f_{xy} & f_{yy} \\
    g_{xx} & g_{xy} & g_{yy} \\
  \end{array}
\right)\] has rank one if and only if there exists a function $M:\Omega \rightarrow \mathbb{R}$ such that
\[ \left\{
     \begin{array}{lcl}
       M df_x & = & dg_x \\
       M df_y & = & dg_y. \\
     \end{array}
   \right.
\]

If $M$ is constant in some open subset of $\Omega$ then $f,g,1$ are linearly independent over $\mathbb{R}$.
So assume that $M$ is not constant. Taking the exterior derivative we get
\[ \left\{
     \begin{array}{lcl}
       dM  \wedge df_x & = & 0 \\
       dM  \wedge df_y & = & 0.\\
     \end{array}
   \right.
\]

Since $dM \neq 0$ we get $df_x\wedge df_y\equiv 0,$ which implies that the determinant of the Hessian of $f$ vanishes identically.
\end{proof}

\subsection{The existence revisited}

Recall from Section \ref{Ex} that to construct a graph with constant principal angles we need a solution of the partial differential equation
\begin{equation}\label{pde} \left(\frac{-f_y + \sqrt{c_1 \Delta - 1 - \Delta^2} f_x}{\Delta}\right)_y =\left(\frac{f_x + \sqrt{c \Delta - 1 - \Delta^2} f_y}{\Delta}\right)_x
\end{equation}
where $\Delta := f_x^2 + f_y^2$. We are going to show that there exist analytical solutions $f$ with non vanishing determinant $\left|
                                                                                   \begin{array}{cc}
                                                                                     f_{xx} & f_{xy} \\
                                                                                     f_{xy} & f_{yy} \\
                                                                                   \end{array}
                                                                                 \right|$.
According to Lemmas \ref{N1} and \ref{N2}, this will imply that the first normal spaces of the graph of $(f,g)$ have rank 2 (the function $g$ is arbitrary). Finally, Proposition \ref{CNT} yields that in this example neither of the two vector fields $T_1$ or $T_2$ is geodesic.

So we are going to prove the following theorem and its corollary.

\begin{theorem}
\label{thm:existence-revisited}
There exist surfaces $\Sigma \subset \mathbb{R}^4$ with constant principal angles $0 < \theta_1 < \theta_2 < \frac{\pi}{2}$
such that their first normal spaces have rank $2$. For such a surface the vector fields $T_1$ and $T_2$ are not geodesic.
\end{theorem}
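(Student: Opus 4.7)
The plan is to realise the desired surface as a graph $(x,y,f(x,y),g(x,y))$ where $f$ is an analytic solution of the quasi-linear PDE (\ref{pde}) produced by Theorem \ref{existen}, and $g$ is determined from $f$ by the first-order formulas for $g_x, g_y$ derived just before (\ref{pde}). By Proposition \ref{graficoHelice}, any such graph automatically has the prescribed constant principal angles $\theta_1,\theta_2\in(0,\pi/2)$. What I need to arrange in addition is that the Hessian determinant of $f$ does not vanish; once that is achieved, Lemmas \ref{N1} and \ref{N2}, combined with the symplectic relation $f_x g_y - f_y g_x = 1$ (which prevents $f,g,1$ from being linearly dependent), will force the first normal space $N_1$ to have rank two. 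The statement about $T_1,T_2$ not being geodesic will then follow immediately from Proposition \ref{CNT}, since the principal angles are generic.

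To produce an $f$ with non-degenerate Hessian I would revisit the Cauchy--Kowalewski construction of Theorem \ref{existen} with a bit more care. With the initial data chosen there, $f|_{\gamma(t)} = \langle \gamma(t),\gamma(t)\rangle/2$ and $\partial f/\partial V|_{\gamma(t)} = \langle \gamma(t), V(\gamma(t))\rangle$, one has $\nabla f|_{\gamma(t)} = \gamma(t)$; differentiating this identity along $\gamma$ gives $\mathrm{Hess}(f)|_{\gamma(t)} \cdot \gamma'(t) = \gamma'(t)$, so $1$ is an eigenvalue of the Hessian on $\gamma$. The PDE (\ref{pde}) itself imposes a second linear relation on $(f_{xx}, f_{xy}, f_{yy})$ along $\gamma$, whose coefficients depend only on the known values $\nabla f|_\gamma = \gamma$. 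These two relations, together with the tangential information carried by the Cauchy data, determine the Hessian on $\gamma$ explicitly, and a direct inspection shows that for a suitable analytic $\gamma$ (for instance a short arc of a circle transverse to the appropriate characteristic direction) the second eigenvalue is non-zero. By analyticity the Hessian determinant is then non-zero on an open neighbourhood of $\gamma(0)$, and restricting to that neighbourhood yields the required $f$.

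With $f$ in hand, $g$ is obtained by integrating the closed one-form whose components are the expressions for $g_x, g_y$ recalled in Section \ref{Ex} — the PDE (\ref{pde}) is exactly the closedness condition. One checks that $f,g,1$ cannot be linearly dependent, since $g = af+b$ would imply $f_x g_y - f_y g_x = 0$, contradicting $f_x g_y - f_y g_x = c_2 \neq 0$. Together with the non-degenerate Hessian this rules out both alternatives of Lemma \ref{N2} for $(f,g)$; Lemma \ref{N1} then yields $\mathrm{rank}(N_1)=2$, and Proposition \ref{CNT} finishes the proof.

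The main obstacle is the second step: actually confirming that the Cauchy--Kowalewski solution has non-degenerate Hessian. The equation (\ref{pde}) is strongly non-linear in $\nabla f$, so the admissible pairs $(\gamma,V)$ must satisfy both the non-characteristic condition of Theorem \ref{existen} and the non-vanishing of the second eigenvalue computed above. Both conditions are open, so a sufficiently generic real analytic choice will suffice; the real work is either to exhibit one explicit pair making the computation transparent, or to phrase the openness/genericity argument cleanly enough to avoid the unpleasant algebra involved in the expression $\sqrt{c_1\Delta - 1 - \Delta^2}$.
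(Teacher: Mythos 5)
Your overall architecture is the right one and matches the paper's: reduce to producing an analytic solution $f$ of (\ref{pde}) with non-vanishing Hessian determinant, recover $g$ by integrating the closed $1$-form, rule out linear dependence of $f,g,1$ via $f_xg_y-f_yg_x=c_2\neq 0$, and conclude through Lemmas \ref{N1}, \ref{N2} and Proposition \ref{CNT}. That bookkeeping is all correct. The gap is in the step you yourself flag as ``the main obstacle'': the non-vanishing of $\det\mathrm{Hess}(f)$ is asserted (``a direct inspection shows\dots'') but never actually established, and it is the whole content of the theorem. On the curve $\gamma$ of Theorem \ref{existen} the Cauchy data only give you the single vector relation $\mathrm{Hess}(f)\,\gamma'=\gamma'$; to pin down the remaining entries you must couple this with the quasi-linear relation coming from (\ref{pde}), whose coefficients involve $E_u$, $E_v$ evaluated at $(\pm f_y,f_x)$ and hence the radical $\sqrt{c_1\Delta-1-\Delta^2}$, and then verify that the second eigenvalue is non-zero for some admissible $(\gamma,V)$. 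Nothing in your sketch guarantees this system is even solvable for the Hessian (the two relations could be dependent), let alone that the resulting determinant is non-zero; ``both conditions are open'' does not help unless you first exhibit one point where the second condition holds.

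The paper sidesteps exactly this difficulty by \emph{not} recycling the initial data of Theorem \ref{existen}. It poses the Cauchy problem on the straight line $\{y=0\}$ with data $f(x,0)=\phi(x)$, $f_y(x,0)=\psi(x)$. There two of the three second derivatives are read off directly from the data, $f_{xx}(x,0)=\phi''(x)$ and $f_{xy}(x,0)=\psi'(x)$, and the equation itself supplies $f_{yy}(x,0)$, so the Hessian along the initial line is completely explicit; one then only has to choose $(\phi'(x),\psi(x))$ with values in a small disc of the admissible annulus where the relevant coefficients of $E$ do not vanish (e.g.\ $\phi(x)=x^2+xu_0$, $\psi(x)=x+v_0$) to get simultaneously non-characteristic data and $\det\mathrm{Hess}(f)\neq 0$ at a point, hence on a neighbourhood by continuity. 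To repair your argument, either switch to such flat initial data, or actually carry out the eigenvalue computation on $\gamma$ for one explicit choice; as written, the existence of an $f$ with non-degenerate Hessian has not been proven.
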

\begin{cor} There exist surfaces $\Sigma \subset \mathbb{R}^4$ with constant principal angles $0 < \theta_1 < \theta_2 < \frac{\pi}{2}$
which are not compositions in the sense of Do Carmo-Dajczer.
\end{cor}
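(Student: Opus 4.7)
The plan is to deduce the corollary directly by combining Theorem \ref{thm:existence-revisited} with the equivalence established in Proposition \ref{CNT}. There is essentially no new work required; the corollary is a two-line logical consequence of material already in place.

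First I would invoke Theorem \ref{thm:existence-revisited} to produce a concrete surface $\Sigma \subset \mathbb{R}^4$ with constant principal angles $0 < \theta_1 < \theta_2 < \pi/2$ whose first normal space $N_1$ has rank $2$. This step is the substantive one, but it is already handled in the preceding theorem via the PDE construction and the rank analysis from Lemmas \ref{N1} and \ref{N2}, so I would simply cite it.

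Second, I would apply Proposition \ref{CNT}, which asserts that for a surface in the generic range $0 < \theta_1 < \theta_2 < \pi/2$, being a composition in the sense of Do Carmo--Dajczer is equivalent to $N_1$ having rank one. Taking the contrapositive of the implication $(i)\Rightarrow(ii)$, a surface with $\mathrm{rank}(N_1)=2$ cannot be a composition. Combining this with the surface produced in the first step yields the corollary.

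The only potential subtlety — and it is not really an obstacle — is to make sure that Proposition \ref{CNT} genuinely applies, which requires verifying the generic-angle hypothesis $0 < \theta_1 < \theta_2 < \pi/2$. This is explicit in the statement of Theorem \ref{thm:existence-revisited}, so there is nothing further to check. Thus the proof should amount to one or two sentences citing Theorem \ref{thm:existence-revisited} and Proposition \ref{CNT}.
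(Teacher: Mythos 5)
Your proposal is correct and matches the paper's reasoning: the corollary is obtained immediately from Theorem \ref{thm:existence-revisited} (which supplies a generic helix surface with $\mathrm{rank}(N_1)=2$) together with Proposition \ref{CNT} (rank one of $N_1$ is equivalent to being a composition in the generic case). The paper in fact proves the theorem and the corollary with a single shared argument, but the logical content is exactly what you describe.
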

\begin{proof}
As explained above it is enough to show the existence of a solution $f$ of the PDE (\ref{pde}) with non vanishing determinant $\left|
                                                                                   \begin{array}{cc}
                                                                                     f_{xx} & f_{xy} \\
                                                                                     f_{xy} & f_{yy} \\
                                                                                   \end{array}
                                                                                 \right|$.
Let $E(u,v)$ be the analytic function defined by
\[ E(u,v) := \frac{u + \sqrt{c_1 \Delta - 1 - \Delta^2} v}{\Delta} \]
where $\Delta := u^2 + v^2;$ equation (\ref{pde}) thus reads
\[ E(-f_y,f_x)_y = E(f_x,f_y)_x,\]
or
\[ E_u(f_x,f_y)f_{xx} + (E_v(f_x,f_y) - E_v(-f_y,f_x))f_{xy} + E_u(-f_y,f_x)f_{yy} = 0.\]
We look for a solution of this equation with the initial conditions
\begin{equation*}
 f(x,0)= \phi(x)\hspace{1cm}\mbox{and}\hspace{1cm}f_y(x,0)= \psi(x),
\end{equation*}
where $\psi$ and $\phi$ are two functions such that
\begin{equation}\label{condition non char}
    E_u(\phi'(x),\psi(x)) \neq 0,\hspace{.5cm}E_v(-\psi(x), \phi'(x))\neq 0\hspace{.5cm}\mbox{and}\hspace{.5cm}\phi''(x)\neq 0. 
    \end{equation}
This means that the initial conditions are not characteristics; thus an analytic solution $f$ of equation (\ref{pde}) does exist by the Cauchy-Kowalewski theorem. Moreover, since $f_{xy}(x,0) \equiv 0$ and $f_{xx}(x,0).f_{yy}(x,0) \neq 0$ we see that the Hessian of $f$ does not vanish and thus that the rank of the first normal space is equal to $2$. In particular the graph is not totally geodesic.

We finally prove the existence of $\psi$ and $\phi$ such that conditions (\ref{condition non char}) hold. Notice that the function $E(u,v)$ is defined in the annulus $ \mathcal{A} = \{ (u,v) : \frac{ c_1 - \sqrt{c_1^2 - 4} }{2} \leq \Delta \leq \frac{ c_1 + \sqrt{c_1^2 - 4} }{2} \},$ where it is moreover analytic. Also notice that $E_u$ and $E_v$ do no vanish identically. Let $\mathcal{B} \subset \mathcal{A}$ be the subset such that either $E_u(u,v) = 0$ or $E_v(-v,u)= 0$. Then $\mathcal{B} \neq \mathcal{A}$ and since $\mathcal{B}$ is closed there exists an open disc $ D \subset \mathcal{A}$ such that $\mathcal{B} \bigcap D = \emptyset$. If $u_0,v_0$ are the coordinates of the center of $D,$ the functions $\phi(x) = x^2 + xu_0$ and $\psi(x) = x + v_0$ for small values of $x$ satisfy the system of initial conditions $(\ref{condition non char})$.
\end{proof}

\subsection{Deformations}

Here we explain how to use a solution $f$ of the PDE (\ref{pde}) in order to locally construct a surface with constant principal angles $\theta_1, \theta_2$. Actually, we will show that a solution $f$ produces a one parameter \emph{deformation} $\Sigma_m \subset \mathbb{R}^4$ of flat surfaces with constant principal angles.

Let $f$ be a solution of the PDE (\ref{pde}). Then there exists $g$ such that 
$$g_x = \frac{-f_y + \sqrt{c_1 \Delta - 1 - \Delta^2} f_x}{\Delta}\hspace{1cm} \mbox{and}\hspace{1cm}g_y= \frac{f_x + \sqrt{c \Delta - 1 - \Delta^2} f_y}{\Delta};$$
so we have 
\[ \left\{
  \begin{array}{lcl}
  f_x^2 + g_x^2 + f_y^2 + g_y^2 &=& c \, \, ,\\
  f_x g_y - f_yg_x &=& 1.
  \end{array}
\right. \]
Let $F_m(x,y) := (m f(x,y), m g(x,y)),$ with $m \in \mathbb{R}$. As explained in Proposition \ref{graficoHelice}, the graph of $F_m$ is a surface with constant principal angles $\theta_1,\theta_2$ if and only if
\[ \left\{ \begin{array}{rcl}
  2 + m^2(f_x^2 + g_x^2 + f_y^2 + g_y^2) &=& \sec^2(\theta_1) + \sec^2(\theta_2) \, \\
  1 + m^2(f_x^2 + g_x^2 + f_y^2 + g_y^2) + m^4(f_x g_y - f_yg_x)^2 &=& \sec^2(\theta_1)\sec^2(\theta_2),
\end{array} \right. \]
i.e. if and only if
\[ \left\{ \begin{array}{rcl}
  2 + m^2c &=& \sec^2(\theta_1) + \sec^2(\theta_2) \, \\
  1 + m^2c + m^4 &=& \sec^2(\theta_1)\sec^2(\theta_2).
\end{array} \right. \]
Thus the graph of $F_m$ is a surface $\Sigma_m$ with constant principal angles 
\[ \left\{ \begin{array}{lcl}
  \theta_1 &=& \mathrm{arcsec}\sqrt{1+\frac{m^2}{2}(c - \sqrt{c^2 - 4})}  \, \\
  \theta_2 &=& \mathrm{arcsec}\sqrt{1+\frac{m^2}{2}(c + \sqrt{c^2 - 4})}\, .
\end{array} \right. \]

It is not difficult to see that the map $(m,c) \rightarrow (\theta_1, \theta_2)$ from $(0,+\infty) \times (2,+\infty)$
into $\Delta = \{(\theta_1, \theta_2) : 0 < \theta_1 < \theta_2 < \frac{\pi}{2} \}$ is surjective.
This shows how to construct the surface $\Sigma_m$ with constant principal angles $\theta_1,\theta_2$ by starting with a solution $f$ of the PDE (\ref{pde}).

\section{Surfaces with parallel mean curvature}

Here we classify surfaces $\Sigma \subset \mathbb{R}^4$ having parallel mean curvature vector field $\mathrm{H}$. Recall that the mean curvature vector $\mathrm{H}$ is the normal vector field given by
\[ \mathrm{H} := \frac{\alpha(T_1,T_1) + \alpha(T_2,T_2)}{2}.\]
We have the following result.
\begin{theorem}
\label{thm:parallel-meancurvature}
Let $\Sigma \subset \mathbb{R}^4$ be a surface with constant principal angles.
The mean curvature vector $\mathrm{H}$ is parallel if and only if $\Sigma$ is a product. Hence, up to a rigid motion, $\Sigma$ is either an open subset of a torus $S_1(r_1) \times S_2(r_2) \subset \mathbb{R}^2 \times \mathbb{R}^2 \subset \mathbb{R}^4$ or an open subset of the cylinder $\mathbb{R} \times \gamma \subset \mathbb{R} \times \mathbb{R}^3 \subset \mathbb{R}^4$, where $\gamma \subset \mathbb{R}^3$ is an helix, i.e. a curve with constant curvature and torsion.
\end{theorem}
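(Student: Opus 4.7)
The plan is to compute the normal derivative of $\mathrm{H}$ in the frame adapted to the helix structure, combine it with the Codazzi equations to force the tangent frame to be parallel, and then invoke Moore's Lemma to obtain a product splitting.

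Since $\alpha(T_1,T_1)=m_2\xi_2$ and $\alpha(T_2,T_2)=m_1\xi_1$, we have $\mathrm{H}=\tfrac{1}{2}(m_1\xi_1+m_2\xi_2)$. Using the structure equations $\nabla^{\perp}\xi_1=dn\,\xi_2$ and $\nabla^{\perp}\xi_2=-dn\,\xi_1$, a direct computation gives
\[
\nabla^{\perp}_X\mathrm{H}=\tfrac{1}{2}\bigl[(dm_1(X)-m_2\,dn(X))\xi_1+(dm_2(X)+m_1\,dn(X))\xi_2\bigr],
\]
so $\mathrm{H}$ is parallel if and only if $dm_1=m_2\,dn$ and $dm_2=-m_1\,dn$.

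Substituting these identities into the Codazzi equations, from (C1) and (C4) I would obtain the pair $m_1\,dt(T_2)=-m_2\,dn(T_1)$ and $m_2\,dt(T_2)=m_1\,dn(T_1)$; multiplying by $m_1$ and $m_2$ respectively and summing yields $(m_1^2+m_2^2)\,dt(T_2)=0$, and the analogous manipulation with (C2) and (C3) gives $(m_1^2+m_2^2)\,dt(T_1)=0$. Hence at each point either $m_1=m_2=0$ (so $\Sigma$ is locally totally geodesic, i.e.\ an open subset of an affine $2$-plane, a trivial product), or $dt\equiv 0$, which by the definition of $t$ means that $T_1$ and $T_2$ are parallel vector fields on $\Sigma$. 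Combined with $\alpha(T_1,T_2)=0$ from Lemma \ref{flat}, Moore's Lemma \cite[p.~28]{BCO} then splits $\Sigma$ locally as an extrinsic product $\gamma_1\times\gamma_2$ in an orthogonal decomposition $\mathbb{R}^4=V_1\oplus V_2$. The converse direction is a routine verification: for such an extrinsic product with constant principal angles, $dt\equiv 0$ together with the Codazzi equations and the parallelism relations force $m_1,m_2$ to be constant along the leaves and therefore $\nabla^{\perp}\mathrm{H}=0$.

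To extract the normal forms in the ``hence'' part I would distinguish the two possible dimension splits. If $\dim V_1=\dim V_2=2$, each $\gamma_i$ is a plane curve of constant curvature, hence a circle (or a line), yielding the torus $S^1(r_1)\times S^1(r_2)\subset\mathbb{R}^2\times\mathbb{R}^2$. If instead $\dim V_1=1$ and $\dim V_2=3$, the first factor is a straight line and the second is a curve $\gamma\subset\mathbb{R}^3$ whose Frenet curvature and torsion are constant, i.e.\ a helix, producing the cylinder $\mathbb{R}\times\gamma$. The main obstacle in the argument is the bookkeeping in the Codazzi manipulations to reach the clean identity $(m_1^2+m_2^2)\,dt=0$, together with the handling of the possible zero loci of $m_1,m_2$ so that the parallelism of $(T_1,T_2)$ propagates by continuity and Moore's Lemma applies globally.
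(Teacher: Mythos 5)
Your proposal is correct and follows essentially the same route as the paper: characterize $\nabla^{\perp}\mathrm{H}=0$ by the identities $dm_1=m_2\,dn$, $dm_2=-m_1\,dn$, feed these into the Codazzi equations to conclude $dt\equiv 0$, and then invoke Moore's Lemma together with $\alpha(T_1,T_2)=0$. Your multiply-and-add packaging $(m_1^2+m_2^2)\,dt=0$ is in fact a slightly cleaner way to organize the same computation than the paper's division by $m_2$, since it handles the zero locus of $(m_1,m_2)$ without a separate case distinction.
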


\begin{proof}
It is enough to show that $T_1$ and $T_2$ are geodesic vector fields. Indeed, since $\alpha(T_1,T_2) \equiv 0,$ Moore's Lemma implies that $\Sigma$ is a local product. The vector field $\mathrm{H}$ is parallel if and only if
\begin{equation}\label{Hpa}
dm_2 = -m_1 dn \hspace{1cm}\mbox{and}\hspace{1cm} dm_1=m_2 dn. 
\end{equation}
If $m_1$ and $m_2$ are constant functions then the Codazzi equations imply that both $T_1,T_2$ are geodesic vector fields. So we can assume that $m_1$ or $m_2$ is not constant. Observe that the above equations imply that if one is not constant so is the other.
So we have that both functions $m_1$ and $m_2$ are not constant. Now we have
\[ \begin{array}{lcll}
  dt(T_1) &=& \frac{dm_2(T_2) }{m_2}    & \mbox{ by Codazzi equation (C3)} \\
          &=& \frac{-m_1 dn(T_2) }{m_2} & \mbox{ by the first equation in (\ref{Hpa})} \\
          &=& \frac{-m_1 m_1 dt(T_1) }{m_2^2} & \mbox{ by Codazzi equation (C2)} \, .\\
\end{array} \]
Thus $dt(T_1) \equiv 0$ i.e. $T_1$ is a geodesic vector field. In a similar way we have
\[ \begin{array}{lcll}
  dt(T_2) &=& \frac{-dm_1(T_1) }{m_1}    & \mbox{ by Codazzi equation (C1)} \\
          &=& \frac{-m_2 dn(T_1) }{m_1} & \mbox{ by the second equation in (\ref{Hpa})} \\
          &=& \frac{-m_2 m_2 dt(T_2) }{m_1^2} & \mbox{ by Codazzi equation (C4)} \, .\\
\end{array} \]
Thus $dt(T_2) \equiv 0$ i.e. $T_2$ is a geodesic vector field.
\end{proof}


\noindent {\bf Authors' Addresses:}

\noindent P. Bayard, \\ Instituto de F\'\i sica y Matem\'aticas, Universidad Michoacana, \\
Edif. C-3, Cd. Universitaria, C.P. 58040 Morelia, Mich. M\'exico \\
  email:     bayard@ifm.umich.mx \\

\noindent A. J. Di Scala, \\ Dipartimento di Matematica, Politecnico di Torino, \\
Corso Duca degli Abruzzi 24, 10129 Torino, Italy \\
  email:     antonio.discala@polito.it \\

\noindent O. Osuna Castro, \\ Instituto de F\'\i sica y Matem\'aticas, Universidad Michoacana, \\
Edif. C-3, Cd. Universitaria, C.P. 58040 Morelia, Mich. M\'exico \\
  email:     osvaldo@ifm.umich.mx \\

\noindent G. Ruiz-Hern\'andez, \\ Instituto de Matem\'aticas, Universidad Nacional Autonoma de M\'exico, \\
Ciudad Universitaria, C.P. 04510 D.F. M\'exico \\
  email:     gruiz@matem.unam.mx \\

\end{document}